\theoremstyle{definition}
\newtheorem{theo}{Theorem}[section]
\newtheorem{coro}[theo]{Corollary}
\newtheorem{lemm}[theo]{Lemma}
\newtheorem{prop}[theo]{Proposition}
\newtheorem{rema}[theo]{Remark}
\newtheorem{defi}[theo]{Definition}
\newtheorem{conj}{Conjecture}
\numberwithin{equation}{section}
\newcommand{\bC}{{\mathbb{C}}}
\newcommand{\bN}{{\mathbb{N}}}
\newcommand{\bR}{{\mathbb{R}}}
\newcommand{\bZ}{{\mathbb{Z}}}
\newcommand{\tc}{{\mathtt{c}}}
\newcommand{\ca}{{\mathfrak{a}}}
\newcommand{\cc}{{\mathfrak{c}}}
\newcommand{\cs}{{\mathfrak{s}}}
\newcommand{\cC}{{\mathfrak{C}}}
\newcommand{\cD}{{\mathfrak{D}}}
\newcommand{\cI}{{\mathfrak{I}}}
\newcommand{\cF}{{\mathfrak{F}}}
\newcommand{\cN}{{\mathfrak{N}}}
\begin{document}
\title[Nonlinear Schr\"odinger Equations With Quasi-Periodic Initial Data I.]{
Existence, Uniqueness and Asymptotic Dynamics of Nonlinear Schr\"odinger Equations With Quasi-Periodic Initial Data:\\I. The standard NLS
}

\dedicatory{Dedicated to the memory of Thomas Kappeler}

\author{David Damanik}
\address{\scriptsize (D. Damanik)~Department of Mathematics, Rice University, 6100 S. Main Street, Houston, Texas
77005-1892}
\email{\color{magenta}damanik@rice.edu}
\thanks{The first author (D. Damanik) was supported by NSF grant DMS--2054752}

\author{Yong Li}
\address{\scriptsize  (Y. Li)~Institute of Mathematics, Jilin University, Changchun 130012, P.R. China. School of Mathematics and Statistics, Center for Mathematics and Interdisciplinary Sciences, Northeast Normal University, Changchun, Jilin 130024, P.R.China.}
\email{\color{magenta}liyong@jlu.edu.cn}
\thanks{The second author (Y. Li) was supported in part by National Basic Research Program of China (2013CB834100), and NSFC (12071175).}

\author{Fei Xu}
\address{\scriptsize (F. Xu)~Institute of Mathematics, Jilin University, Changchun 130012, P.R. China.}
\email{\color{magenta}stuxuf@outlook.com}
\thanks{The third author (F. Xu) is supported by the China post-doctoral grant (BX20240138). He is sincerely grateful for the invitation to give a remote talk at UCLA on January 23, 2024, where the existence and uniqueness results on the (derivative) NLS with quasi-periodic initial data were announced publicly.}

%

\begin{abstract}
This is the first part of a two-paper series studying nonlinear Schr\"odinger equations with quasi-periodic initial data. In this paper, we consider the standard nonlinear Schr\"odinger equation. Under the assumption that the Fourier coefficients of the initial data obey a power-law upper bound, we establish local existence of a solution that retains quasi-periodicity in space with a slightly weaker Fourier decay. Moreover, the solution is shown to be unique within this class of quasi-periodic functions. In addition, for the nonlinear Schr\"odinger equation with small nonlinearity, within the time scale, as the small parameter of nonlinearity tends to zero, we prove that the nonlinear solution converges asymptotically to the linear solution with respect to both the sup-norm $\|\cdot\|_{L_x^\infty(\mathbb R)}$ and the Sobolev-norm $\|\cdot\|_{H^s_x(\mathbb R)}$.

The proof proceeds via a consideration of an associated infinite system of coupled ordinary differential equations for the Fourier coefficients and a combinatorial analysis of the resulting tree expansion of the coefficients.
For this purpose, we introduce a Feynman diagram for the Picard iteration and $\ast^{[\cdot]}$ to denote the complex conjugate label.
\end{abstract}

\maketitle
\tableofcontents

\section{Introduction}\label{intro}

This is the first part of a two-paper series dedicated to studying nonlinear Schr\"odinger-type equations with quasi-periodic initial data, given by the following Fourier series,
\begin{align}\label{id0}
u(0,x)&=V(x)\nonumber\\
&=\sum_{n\in\mathbb Z^\nu}c(n)e^{{\rm i}\langle n\rangle x}, \quad x\in\mathbb R.
\end{align}
Here, $2\leq\nu<\infty$ is the dimension of a given frequency vector $\omega=(\omega_j)_{1\leq j\leq\nu}\in\mathbb R^\nu$, $\langle n\rangle=n\cdot\omega=\sum_{j=1}^{\nu}n_j\omega_j$ stands for the inner product w.r.t. the given frequency vector $\omega$, and $\{c(n)\}_{n\in\mathbb Z^\nu}$ represents the collection of known initial Fourier coefficients.

Throughout, the frequency vector $\omega$ is assumed to be non-resonant, that is, rationally independent, which means that $\langle n\rangle=0$ implies $n=0\in\mathbb Z^\nu$.

This two-paper series includes the following two parts:

I. The Standard Nonlinear Schr\"odinger Equation
\begin{align}\label{cnls}
\tag{cNLS}{\rm i}\partial_tu+\partial_{xx}u\pm|u|^{2p}u=0,\quad 1\leq p\in\mathbb N.
\end{align}

II. The Derivative Nonlinear Schr\"odinger Equation
\begin{align}\label{dnls}
\tag{dNLS}{\rm i}\partial_tu+\partial_{xx}u-{\rm i}\partial_x(|u|^{2}u)=0.
\end{align}

%
%

For the sake of convenience, we refer to \eqref{cnls}/\eqref{dnls} together with \eqref{id0} as the associated quasi-periodic Cauchy problem. What we are interested in is to study the (global) existence and uniqueness of spatially quasi-periodic solutions with the same frequency vector as the initial data to these equations. Such a solution is defined by the following spatially quasi-periodic Fourier series
\begin{align}\label{ses}
u(t,x)=\sum_{n\in\bZ^\nu}c(t,n) e^{{\rm i}\langle n\rangle x},
\end{align}
where $c(0,n)=c(n)$ for all $n\in\bZ^\nu$, and $c(t,n)$ is a family of unknown Fourier coefficients for all $n\in\bZ^\nu$ on a suitable time interval.

This two-paper series has the following three sources of motivation.


The first one is a communication with the late Thomas Kappeler. In 2021, after we submitted our manuscript on the generalized KdV equation with quasi-periodic initial data \cite{DLX2021}, we received positive feedback from him. He asked whether our approach could be applied to other equations such as the nonlinear Schr\"odinger equation with higher order non-linearity. Since then, we have discussed it several times by email. However, he unfortunately passed away on May 30, 2022. With respect and admiration we dedicate this two-paper series to the memory of Thomas Kappeler, a great mathematician.

The second one is from Klaus's work \cite{K2023}. He pointed out that it is still an open problem to understand the existence, uniqueness and large time behaviour of the solutions to the nonlinear Schr\"odinger equation with initial data that do not tend to zero as $|x|\rightarrow\infty$, even for the completely integrable, one-dimensional, cubic nonlinear Schr\"odinger equation
\begin{align}
{\rm i}\partial_tu+\partial_x^2u-2|u|^2u=0.
\end{align}

The third one is related to the the so-called Deift conjecture for the celebrated KdV equation
\begin{align}\label{kdv}
\tag{KdV}\partial_tu+\partial_x^3u+u\partial_xu=0.
\end{align}
Inspired by some global results on the periodic initial data problem for \eqref{kdv}, see \cite{L1975CPAM,MT1976CPAM}, and the discovery of quasi-crystals, Percy Deift proposed the following:
\begin{conj}(\hspace{-0.1mm}\cite{deift2008,deift2017})
If the initial data is almost periodic, the solution of \eqref{kdv} evolves almost periodically in time. That is, the solution $u(t,x)$ to \eqref{kdv} with almost periodic initial data is almost periodic in time $t$ and retains the same spatial almost periodicity as the initial data for all times.
\end{conj}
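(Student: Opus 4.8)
The plan is to push the machinery of this series — and of \cite{DLX2021} for the generalized KdV equation — as far as it will go on \eqref{kdv}, and to isolate precisely the point at which genuinely new input is needed. First I would expand the almost periodic datum over its frequency module: writing that module as a countable subgroup of $\bR$ indexed by $\bZ^{\nu}$, $\nu\le\infty$, set $V(x)=\sum_{n}c(n)e^{{\rm i}\langle n\rangle x}$ as in \eqref{id0} and look for a solution $u(t,x)=\sum_{n}c(t,n)e^{{\rm i}\langle n\rangle x}$ with $c(0,n)=c(n)$. Substituting into \eqref{kdv} and matching exponentials (here the non-resonance of $\omega$, $\langle n\rangle=0\Rightarrow n=0$, is what keeps the frequency module from growing) turns the PDE into the infinite coupled system
\begin{equation}
\dot c(t,n)={\rm i}\langle n\rangle^{3}c(t,n)-\frac{{\rm i}}{2}\langle n\rangle\sum_{n_{1}+n_{2}=n}c(t,n_{1})c(t,n_{2}),
\end{equation}
whose Duhamel form is solved by Picard iteration, with the iterates organized through the same binary-tree / Feynman-diagram expansion used in the present paper for \eqref{cnls}. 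Under a power-law bound $|c(n)|\lesssim(1+|n|)^{-A}$ the combinatorial estimates on the tree sum should again give a positive radius of convergence in $t$; this yields local existence, uniqueness inside the class of spatially almost periodic series, and persistence of spatial almost periodicity with a slightly weaker decay exponent — the exact KdV analogue of the local result of this paper.

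The real work is the passage from local to global. For finite-gap data this is classical: the solution is an explicit theta function, globally quasi-periodic in both $x$ and $t$ (Dubrovin; Its–Matveev; McKean–Trubowitz; Flaschka–McLaughlin). For general almost periodic data I would exploit the completely integrable structure of \eqref{kdv} through the almost periodic Schr\"odinger operator $H_{V}=-\partial_{x}^{2}+V$: since the KdV flow is isospectral, $u(t,\cdot)$ remains in the isospectral set of $H_{V}$ for as long as it is defined. If that set is a \emph{compact} invariant set — as is known for reflectionless operators with homogeneous or Cantor-type spectrum (Sodin–Yuditskii; Gesztesy–Yuditskii; Binder–Damanik–Goldstein–Lukic; Eichinger–VandenBoom–Yuditskii) — it supplies a bound on the solution uniform in $t$, which one then feeds back into the tree expansion on successive short time intervals to continue the solution for all $t$. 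An alternative, purely PDE route is to propagate a whole scale of conserved quantities of the KdV hierarchy (in the spirit of the perturbation-determinant method, adapted to the almost periodic category) to obtain the same uniform control.

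Granting global existence and a compact invariant set $\mathcal{T}$ containing the orbit, time almost periodicity should follow from the standard integrable picture: $\mathcal{T}$ carries a transitive action of a compact abelian group, and the KdV flow linearizes on it — in action–angle coordinates the angles evolve as $\theta_{j}(t)=\theta_{j}(0)+\kappa_{j}t$. Hence $t\mapsto u(t,\cdot)$ is the restriction of a one-parameter character flow on a compact group, i.e.\ Bohr almost periodic in $t$; combined with the spatial structure already in hand this gives joint almost periodicity in $(t,x)$ with the same frequency module as $V$, which is the assertion of the conjecture.

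The main obstacle is exactly the middle step for a \emph{general} almost periodic potential possessing only Fourier decay — no smallness, no analyticity, no finite-gap hypothesis. Every existing positive result (the small analytic quasi-periodic KdV theory of Binder–Damanik–Goldstein–Lukic, the limit-periodic and finite-gap constructions, the low-regularity conservation-law methods, and the quasi-periodic cases treated in this series) either assumes smallness of an analytic quasi-periodic potential, or works in a restricted category, or operates in a setting where the isospectral torus is already understood. Bridging from a local tree-expansion solution with power-law Fourier decay to a global one trapped in a compact isospectral set is precisely the content of the Deift conjecture, and it is where this plan — like all current approaches — stalls; any complete proof must supply that uniform-in-time control, and the combinatorial expansion developed here is, at present, only the engine for the first (local) step.
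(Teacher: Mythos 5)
There is a fundamental problem here that no amount of technical work can repair: the statement you are trying to prove is a \emph{conjecture} quoted by the paper, not a theorem the paper establishes, and the paper itself reports that it is false in general. As recounted in the introduction, Chapouto, Killip, and Visan \cite{CKV} constructed a counterexample: taking $V(x)=\mathrm{sq}(\alpha_1 x)+\mathrm{sq}(\alpha_2 x)$ with $\alpha_1,\alpha_2$ rationally independent, they showed via a nonlinear smoothing effect that the solution of \eqref{kdv} fails to be almost periodic in space at some positive time (see also the program of \cite{D21} aimed at such counterexamples). Consequently your plan cannot succeed as stated: the ``middle step'' you identify --- confinement of the orbit in a compact isospectral set yielding uniform-in-time control --- is not merely the hard part, it is provably unavailable for general almost periodic data with only Fourier decay, since otherwise the conjecture would be true. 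The known positive results you invoke all carry extra hypotheses (reflectionlessness together with homogeneous or Craig-type ``thick'' spectrum as in \cite{BDGL2018DUKE}, or the settings of \cite{EVY2019TAMS,LY2020JFA}, or small analytic quasi-periodic data), and these hypotheses are exactly what exclude counterexamples like the square-wave datum.

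Your first paragraph --- the local tree-expansion theory for \eqref{kdv} under a power-law bound on the Fourier coefficients --- is consistent with what the paper describes for the quasi-periodic case (Tsugawa's local result, \cite{DG2017JAMS}, \cite{DLX2021}) and is not the issue; note, though, that for $\nu=\infty$ (genuinely almost periodic, not quasi-periodic data) even this local step is not covered by the combinatorial estimates of this series, which are carried out for finite $\nu$. But the honest conclusion is the opposite of what you aim for: the correct ``resolution'' of this statement, as the paper presents it, is that it holds in restricted categories under spectral assumptions and fails in general, so a proof of the statement as written should not be attempted.
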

Regarding this conjecture, there are some past and recent results in different categories: the almost periodicity and quasi-periodicity of initial data, positive and negative answers.

In the $1970$s, Lax \cite{L1975CPAM} and McKean and Trubowitz \cite{MT1976CPAM} considered \eqref{kdv} with periodic initial data and they obtained a global result: the solution is almost periodic in time and it retains the same spatial periodicity as the initial data for all times. This is also one of the primary sources of motivation for the Deift conjecture.

In the quasi-periodic setting, Tsugawa obtained a local result under a polynomial decay condition in the Fourier space, i.e., $|c(n)|\lesssim (1+|n|)^{-\rho}$, where $\rho$ is large. The proof method he used is based on Bourgain's Fourier restriction norm method.

Later, also in the quasi-periodic setting, under an exponential decay condition in the Fourier space, i.e., $|c(n)|\lesssim e^{-\kappa|n|}$, Damanik and Goldstein proved a global result. They first used an explicit combinatorial analysis method to obtain a local result, and then applied the complete integrability structure of \eqref{kdv}, that is, the Lax pair structure, to get a global result \cite{DG2017JAMS}.
In general, the crucial step in extending from a local to a global result is to achieve a uniform time extension.
How did they succeed in doing that? First, with the Lax pair at hand, \eqref{kdv} can be viewed as an iso-spectral evolution of the associated dynamically defined Schr\"odinger operator
\[H_t=-\partial_x^2-\frac{u}{6},\]
where $u$ is the solution to \eqref{kdv}. They then applied a bi-correspondence between the exponential decay of the spectral gaps of the Schr\"odinger operator $H_t$
and the exponential decay of the family of the Fourier coefficients of the solution to \eqref{kdv}; see \cite{DG2014PMIHES}.

%
%

In the general setting, that is, with initial data being almost periodic in space, the Deift conjecture has been partially solved under certain assumptions on the Schr\"odinger operator $H_0$, i.e., a homogeneous spectrum along with the so-called reflectionlessness property. Under these assumptions, the resulting class of potential/operators can then be associated with a torus of dimension given by the number of the gaps of the spectrum in two different ways: one can either associate (i)~Dirichlet data or (ii)~pass to the dual group of the fundamental group of the complement of the spectrum. Then the KdV flow can be related to (Dubrovin/linear) flows on either of these tori, and one can establish existence of solutions there and then pull them back. On the first torus, Binder et al obtained a global result \cite{BDGL2018DUKE} if the spectrum of $H_0$ has only absolutely continuous part and it is ``thick" enough (i.e., it satisfies the so-call Craig-type condition plus homogeneity). On the second torus, there are existence \cite{EVY2019TAMS} and uniqueness \cite{LY2020JFA} results for higher-order KdV flows.

The aforementioned results provide positive answers to the Deift conjecture. There are two recent works regarding a negative answer. In \cite{D21}, Damanik, Lukic, Volberg, and Yuditskii put forward their belief that this conjecture is not true in general and described a program to construct a counterexample to it, that is, an almost periodic function whose evolution under the KdV equation is not almost periodic in time. Later, in \cite{CKV},  Chapouto, Killip, and Visan constructed a counterexample to disprove this conjecture by choosing a rationally independent combination of square waves as initial data for \eqref{kdv}, that is,
\[V(x)=\text{sq}(\alpha_1x)+\text{sq}(\alpha_2x),\]
where~sq is a $2\pi$-periodic square wave, sq$(x)=$~sgn($\sin x$), and $\alpha_1, \alpha_2$~are rationally independent. Then they used a nonlinear smoothing effect to prove that the solution is not almost periodic in space at some time.

Though the Deift conjecture has been disproved, in the spirit of this conjecture, it remains an interesting and open problem to study almost periodicity in both time and space for PDEs such as KdV, NLS and other interesting equations.

We also wish to mention \cite{B1993, DSS2020, KST2017, O2015, O2015SIAMMA} as a partial list of papers containing related work.

Inspired by Kappeler's question and the Deift conjecture, we devote this two-paper series to a study of nonlinear Schr\"odinger equations with quasi-periodic initial data.

\medskip

{{\bf Hypothesis.}}\quad Throughout this series, we will assume some decay
condition on the initial data in the Fourier space, that is, on the Fourier coefficients w.r.t. the modulus $|n|$ of the dual variable $n\in\bZ^\nu$, which will be used in both \eqref{cnls} and \eqref{dnls}.  In fact, we will use two different kinds of decay conditions. The first one is the so-called polynomial decay condition: we say that a spatially quasi-periodic function $V$ is {\bf $\mathtt r$-polynomially decaying} for some positive constant $\mathtt r>0$ if its Fourier coefficients satisfy
\begin{align}
|\hat f(n)|\lesssim(1+|n|)^{-\mathtt r},\quad n\in\bZ^\nu.
\end{align}
The second one is the so-called exponential decay condition, that is, if the above polynomial decay condition is replaced by the following exponential decay condition
\begin{align}
|\hat f(n)|\lesssim e^{-\kappa|n|},\quad n\in\bZ^\nu,
\end{align}
we say that such a function is {\bf $\kappa$-exponentially decaying}. Here we call $\mathtt r$ or $\kappa$ {\bf the decay rate} in both cases.

\medskip

{\bf Main Results.}\quad Roughly speaking, under a polynomial/exponential decay condition in the Fourier space, we can obtain a local result for \eqref{cnls}/\eqref{dnls} respectively. Furthermore, under a certain suitable condition, we can prove that the former can be up to any assigned time horizon. Also, for both \eqref{cnls} and \eqref{dnls} in a weak nonlinear setting, within the time scale, the nonlinear solution is asymptotic to the linear one with respect to both the sup-norm $\|\cdot\|_{L_x^\infty(\mathbb R)}$ and the Sobolev-norm $\|\cdot\|_{H^s_x(\mathbb R)}$. See Theorem \autoref{cnlsth} in this paper and Theorem 1.1 in the second paper in this series, \cite{DLX24II}, for the detailed statements of our main results.

\begin{rema}

(a)~The existence and uniqueness results were publicly announced in January 2024 in a talk given by Fei Xu in the Analysis and PDE Seminar at UCLA; see \cite{F2024CULA}. About a month after this seminar talk, Hagen Papenburg posted the preprint \cite{P24} on the arXiv, in which he develops an alternative approach to studying dispersive PDEs with quasi-periodic initial data.

(b)~For the local results of the standard nonlinear Schr\"odinger equation \eqref{cnls} in this paper and the derivative nonlinear Schr\"odinger equation \eqref{dnls} in \cite{DLX24II}, we do not need any other additional conditions for the frequency vector $\omega$ but the {non-resonance} condition introduced above, as there is no {\bf small divisor problem} appearing in the {\bf quasi-periodic motion},
encompassing time quasi-periodic solutions to ODEs, PDEs, Hamiltonian systems, and more.

(c)~The local analysis of \eqref{cnls} works for arbitrary $2\leq p\in\mathbb N$, especially including the mass-critical case (i.e., $p=2$); see \cite{Merle2005} and the references therein.

\end{rema}

{\bf Outline of Proof}\quad We implement the steps in the following diagram to prove our main results.
\[
\xymatrix{
\boxed{\substack{\text{Magentauction of a nonlinear PDE with spatial Fourier series }\\[0.5mm]\text{to a nonlinear system of infinite coupled ODE}}}
\ar[d]^{\text{feedback of nonlinearity}}\\
\boxed{\text{Picard iteration}}
\ar[d]
\ar[r]_{
\substack{
\text{alternating}\\[0.5mm]
\text{higher-dimensional}\\[0.5mm]
\text{discrete convolution}
}
}^{\substack{\text{{pcc}}\\[0.5mm] \text{Feynman diagam}}}&
\boxed{\text{Combinatorial tree}}\ar[d]
\\
\boxed{\text{Cauchy sequence}}\ar[d]&
\boxed{\text{Exp/poly decay}}
\ar[l]_{\text{interpolation}}\ar[d]
\\
\boxed{\text{Local existence}}\ar[d]^{?}&\boxed{\text{Uniqueness}}\\
\boxed{\text{Global problem}} &
}
\]
An approach of this nature has been previously applied in other settings in \cite{DG2017JAMS, DLX2021, DLX22AR}.





\section{Preliminaries}\label{pre}

In this section, we first introduce the concept of (spatially) almost/quasi-periodic functions in \autoref{sqf}, which will serve as our initial data/solutions. Next, to deal with the alternating discrete convolution of higher dimensions, we propose the power of $\ast^{[\cdot]}$ for labelling the complex conjugate (pcc for short) appearing in the nonlinear part of the Picard iteration in \autoref{lc}. Furthermore, we combine the multi-linear operator and the alternating sum condition with some basic concepts in \autoref{scc}. These will be used in the next paper \cite{DLX24II} as well.

\subsection{(Spatially) Almost-Periodic Functions}\label{sqf}
In this subsection we introduce the concept of (spatially) almost periodic functions, which will be our initial data/solutions.
\begin{defi}\cite{O2015SIAMMA,CL20JDE}
We say that a bounded continuous function $f:\mathbb R\rightarrow\mathbb C$ is almost periodic if it satisfies one of the following statements:
\begin{itemize}
  \item[(i)] Bohr (1925) defined a uniformly almost periodic function $f$ as an element in the closure of the trigonometric polynomials w.r.t. uniform norm, or rather, for every $\epsilon>0$, there exists a trigonometric polynomial $P_\epsilon$ (a finite linear combination of sine and cosine waves), such that the distance w.r.t. between $f$ and $P_\epsilon$ is less than $\epsilon$, that is,
      \[\sup_{x\in\mathbb R}|f(x)-P_\epsilon(x)|<\epsilon.\]

  \item[(ii)] Bochner (1926) proved that Bohr's definition is equivalent to the following: Given a sequence $\{x_n\}_{n\in\mathbb N}\subset\mathbb R$, the collection $\{f(\cdot+x_n)\}_{n\in\mathbb N}$ is precompact in $L^\infty(\mathbb R)$. Namely, there exists a subsequence $\{f(\cdot+x_{n_j})\}$ uniformly convergent on $\mathbb R$.

  \item [(iii)] There are a frequency vector $\omega\in\mathbb T^\nu$ and a continuous $F:\mathbb T^\nu\rightarrow\mathbb R$,  where $\nu\in\mathbb N\cup\{\infty\}$, such that
      \[f(x)=F([\omega x]),\]
where $[\cdot]: \mathbb R^\nu\rightarrow\mathbb T^\nu$ is a mapping defined by letting $[y]\equiv y~(\text{mod}~2\pi)$.
\end{itemize}
\end{defi}

It follows from the torus definition of almost periodic functions that $f$ has the following formal Fourier expansion:
\[f(x)=\sum_{\lambda\in\Lambda}\hat f(\lambda)e^{{\rm i}\lambda x},\quad x\in\mathbb R.\]
Here, we refer to $\lambda$ as the Fourier index, $\Lambda$ as the Fourier support set of $f$, and $\{\hat f(\lambda)\}_{\lambda\in\Lambda}$ as a family of Fourier coefficients, defined by
\begin{align*}
\hat f(\lambda)
&=\lim_{L\rightarrow+\infty}\frac{1}{2L}\int_{-L}^{L}f(x)e^{-{\rm i}\lambda x}{\rm d}x,\quad\lambda\in\Lambda.
\end{align*}
Here $\mathcal M$ represents the mean value of a function over the real line $\mathbb R$, defined as:
\[\mathcal M(f):=\frac{1}{2L}\int_{-L}^{+L}f(x){\rm d}x.\]
And the inner product $\langle\cdot,\cdot\rangle_{\mathcal L^2(\mathbb R)}$ is defined as follows:
\begin{align*}
\langle f,g\rangle_{\mathcal L^2(\mathbb R)}:&=\mathcal M(f\overline{g})\\
&=\lim_{L\rightarrow+\infty}\frac{1}{2L}\int_{-L}^{+L}f(x)\overline{g(x)}{\rm d}x.
\end{align*}

The Fourier support set of $f$ categorizes almost periodic functions into different classes based on their properties. Specifically:
\begin{itemize}
  \item If $\Lambda\subset\omega\mathbb Z$, then the functions are periodic.
  \item if $\Lambda\subset\sum_{j=1}^{\nu}\omega_j\mathbb Z$, where $2\leq\nu<\infty$, they are quasi-periodic;
  \item if $\Lambda\subset\sum_{j=1}^{\infty}\omega_j\mathbb Z$, they are almost-periodic functions.
\end{itemize}
We are particularly interested in the nontrivial and finite case, namely quasi-periodic functions, where functions have multiple periods with frequencies that are rationally independent. For instance,
$$f(x)=\cos x+\cos\omega x,$$ where $\omega$ is an irrational number and $x\in\mathbb R$.
From this example, we observe that the image of such a quasi-periodic function can be represented in a higher-dimensional space using the so-called generating/hull/torus function
$F(x,y)=\cos x+\cos y$ (e.g., from line $(x;f)$ to surface $(x,y;F)$, i.e., such a line $(x;f)$ can be embedded into a surface $(x,y;F)|_{y=\omega x}$). Clearly, $F(\cdot,\cdot)$ is $2\pi$-periodic w.r.t. each direction, and $f(x)=F(x,y)|_{y=\omega x}$.

Regarding quasi-periodic functions, we assume that $\omega=(\omega_j)_{1\leq j\leq\nu}\in\mathbb R^\nu$ is assumed to be non-resonant, implying that $\{e^{{\rm  i}\langle n\rangle x}\}$ is orthonormal; see Proposition \ref{basi}.

\begin{prop}\label{basi}
If the frequency vector $\omega$ is non-resonant, then
\[
\langle e^{{\rm i}\langle n\rangle x},e^{{\rm i}\langle n^\prime\rangle x}\rangle_{\mathcal L^2(\mathbb R)}={\delta_{0,n-n^\prime}},\quad \forall n,n^\prime\in\bZ^\nu.
\]
\end{prop}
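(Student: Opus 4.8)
The plan is to compute the mean value
\[
\langle e^{{\rm i}\langle n\rangle x},e^{{\rm i}\langle n'\rangle x}\rangle_{\mathcal L^2(\mathbb R)}
=\lim_{L\to+\infty}\frac{1}{2L}\int_{-L}^{L}e^{{\rm i}\langle n\rangle x}\,\overline{e^{{\rm i}\langle n'\rangle x}}\,{\rm d}x
=\lim_{L\to+\infty}\frac{1}{2L}\int_{-L}^{L}e^{{\rm i}\langle n-n'\rangle x}\,{\rm d}x
\]
directly, and split into two cases according to whether $n=n'$ or not. If $n=n'$, then $\langle n-n'\rangle=0$, the integrand is identically $1$, and the average equals $1$ for every $L$, hence the limit is $1$. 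This handles the diagonal term and gives the value $\delta_{0,n-n'}=1$.

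For the off-diagonal case $n\neq n'$, the key point is that the non-resonance hypothesis enters here: since $\omega$ is non-resonant and $n-n'\neq 0\in\bZ^\nu$, we have $\langle n-n'\rangle=(n-n')\cdot\omega\neq 0$. Set $m:=\langle n-n'\rangle\in\bR\setminus\{0\}$. Then
\[
\frac{1}{2L}\int_{-L}^{L}e^{{\rm i}mx}\,{\rm d}x=\frac{1}{2L}\cdot\frac{e^{{\rm i}mL}-e^{-{\rm i}mL}}{{\rm i}m}=\frac{\sin(mL)}{mL},
\]
whose modulus is bounded by $\frac{1}{|m|L}\to 0$ as $L\to+\infty$. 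Hence the limit is $0$, which is $\delta_{0,n-n'}$ in this case. Combining the two cases yields the claimed identity for all $n,n'\in\bZ^\nu$.

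I do not anticipate a genuine obstacle here; this is a short computation. The only place requiring care is making explicit that the non-resonance assumption is exactly what guarantees $m\neq 0$ in the off-diagonal case — without it the average need not vanish (e.g.\ if $\langle n-n'\rangle=0$ for some $n\neq n'$, the integrand is again $1$ and the ``off-diagonal'' average would be $1$, not $0$). A secondary minor point is simply to note that the limit defining $\mathcal M$ exists in both cases, which the explicit formulas above show; no appeal to a general existence theorem for the mean of almost periodic functions is needed for these particular exponentials.
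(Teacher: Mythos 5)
Your proposal is correct and follows essentially the same route as the paper: split into the cases $n=n^\prime$ and $n\neq n^\prime$, use non-resonance to get $\langle n-n^\prime\rangle\neq0$ in the second case, and conclude that the averaged integral tends to $1$ or $0$ respectively. The only difference is that you spell out the explicit $\sin(mL)/(mL)$ bound that the paper leaves implicit, which is a harmless (and welcome) addition.
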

\begin{proof}
Let $n$ and $n^\prime$ be two elements in $\bZ^\nu$.

If $n=n^\prime$, then $\langle n-n^{\prime}\rangle=0$. By the definition of $\langle\cdot,\cdot\rangle_{L^2(\bR)}$, we have
\begin{align*}
\langle e^{{\rm i}\langle n\rangle x},e^{{\rm i}\langle n^\prime\rangle x}\rangle_{\mathcal L^2(\bR)}&=\lim_{L\rightarrow\infty}\frac{1}{2L}\int_{-L}^{L}e^{{\rm i}\langle n-n^\prime\rangle x}{\rm d}x=1.
\end{align*}

If $n\neq n^\prime$, it follows from the non-{resonance} condition on the frequency $\omega$ that $\langle n-n^\prime\rangle\neq0$. Furthermore, according to the definition of $\langle\cdot,\cdot\rangle_{L^2(\bR)}$, one can derive that
\begin{align*}
\langle e^{{\rm i}\langle n\rangle x},e^{{\rm i}\langle n^\prime\rangle x}\rangle_{\mathcal L^2(\bR)}&=\lim_{L\rightarrow\infty}\frac{1}{2L}\int_{-L}^{L}e^{{\rm i}\langle n-n^\prime\rangle x}{\rm d}x=0.
\end{align*}

This completes the proof of Proposition \ref{basi}.
\end{proof}

To sum up, for a quasi-periodic function $f$ with frequency vector $\omega$, it has the following Fourier series
\[f(x)=\sum_{n\in\mathbb Z^\nu}\hat f(n)e^{{\rm i}\langle n\rangle x}, \]
where
\[\hat f(n)=\lim_{L\rightarrow\infty}\frac{1}{2L}\int_{-L}^{+L}f(x)e^{-{\rm i}\langle n\rangle x},\quad n\in\bZ^\nu.\]

It should be pointed out that (i)~such functions don't decay to zero (oscillating at infinity); (ii)~they are not periodic
and cannot therefore be studied on a circle. In the decaying/periodic cases, various (finite) $L^2_0(\mathbb R)/L^2(\mathbb T)$ conserved quantities can help us to analyze the global Cauchy problem, but, in the quasi-periodic setting, even if averaged, $L^2(\mathbb R)$ conserved quantities do ``not" (or rather, we do not know how to use them); see \cite{K2023}.


%
%

By the way, regarding the spatial quasi-periodicity, it is related to the study of quasi-crystals in materials\cite{Baake02,deift2008,BDG2016NAMS,deift2017}, quasi-patterns in the Faraday wave experiment\cite{BIS2017CMP,iooss2019}, rogue waves in oceanography\cite{WZ2021JFM,WZ2021JNS,st2022}, Bose-Einstein condensation in quantum mechanics\cite{W2020CMP}, the theory of conductivity \cite{DN2005UMN}, irrational tori in mathematics \cite{B2007} and so on and so forth.





\subsection{Power of $\ast^{[\cdot]}$ for the Complex Conjugate}\label{lc}

In this subsection we introduce the power of $\ast^{[\cdot]}$ for labelling the complex conjugate (pcc for short) to deal with the alternating discrete convolution of higher dimensions appearing in the Picard iteration.
\begin{defi}
The {\bf alternating discrete convolution of higher dimensions} for complex functions $f_j: \copyright\rightarrow\bC$, where $j=1,\cdots,Q\in2+\bN$, with total distance $n\in\copyright$, is defined by letting
\begin{align}\label{adc}
f_1\star\cdots\star f_Q(n):=\sum_{\substack{n_j\in\copyright,~~j=1,\cdots,Q\\{\sum_{j=1}^{Q}(-1)^{j-1}n_j=n}}}
\prod_{j=1}^Q\{f_j(n_j)\}^{\ast^{[j-1]}}.
\end{align}
Here $\star$ stands for the alternating discrete convolution operation of higher dimensions, $Q$ will be $2p\sigma$ in the following sections, $\copyright$ stands for the basic lattice space, i.e., $\mathbb Z^\nu$, 
and ${\ast^{[\bullet]}}$ denotes the power of the complex conjugate appearing in the Picard iteration (see \autoref{lc}).
\end{defi}

Taking the alternating discrete convolution of higher dimensions as a guide, we propose the power of $\ast^{[\cdot]}$ to label the complex conjugate (see \autoref{lc}), and some combinatorial concepts and notations (see \autoref{scc}) to deal with the complicated Picard iteration in a combinatorial manner.

The alternating pattern of $+-+\cdots+-+$ generating from the complex-valued nonlinearity prompts us to consider the so-called {\bf ``power of $\ast^{[\cdot]}$"}, which is {defined} as follows: for any complex number $z\in\bC$, we use $z^{\ast^0}$ and $z^{\ast^1}$ to stand for itself $z$ and its complex conjugate $\bar z$ {(here ``~$\bar{~}$~" denotes the complex conjugate operation as usual)}, that is,
\begin{align}\label{c}
z\triangleq z^{\ast^0}\quad\text{and}\quad\bar z\triangleq z^{\ast^1}.
\end{align}
Here $\ast$ not only can be viewed as a unifier of $+$ and $-$ used in \cite{2022not}, but also can help us to express the complex conjugate in a manner of combinatorics during the Picard iteration. Regarding this notation, we have the following several operation properties:

\begin{itemize}
  \item For any given $m\in\bN$,
  \begin{align}\label{co1}
  \text{the result of}~m\text{-times complex conjugate of}~z~\text{is equal to}~z^{\ast^{[m]}},
  \end{align}
  where $[\cdot]\in\{0,1\}$ is determined by the congruence equation $[m]\equiv m~(\text{mod}~2)$.
 \item For any given $m\in\bN$, we have
 \begin{align}\label{co2}
 \overline{z^{\ast^{[m]}}}=z^{\ast^{[m+1]}}.
 \end{align}
 \item For any given $m,m^\prime\in\bN$, we have
 \begin{align}\label{co3}
 \left(z^{\ast^{[m]}}\right)^{\ast^{[m^\prime]}}=
 z^{\ast^{[m+m^\prime]}}.
 \end{align}
 \item For any given $m\in\bN$, complex numbers $z_1\in\bC$ and $z_2\in\bC$, we have
 \begin{align}\label{lin}
 (z_1+z_2)^{\ast^{[m]}}=z_1^{\ast^{[m]}}+z_2^{\ast^{[m]}}
 \end{align}
 and
 \[
 (z_1z_2)^{\ast^{[m]}}=z_1^{\ast^{[m]}}\cdot z_2^{\ast^{[m]}}.
 \]
\end{itemize}
Proofs of \eqref{co1}--\eqref{lin} can be given by induction.



\subsection{Combinatorial Structure and Some Basic Concepts}\label{scc}
In this subsection, we address the combinatorial structure of the domain of the multi-linear operator and the summation condition; see \eqref{adc}.

Throughout this paper,
for any given $p\in1+\bN$,
denote by $P=2p+1$.
\begin{defi}
The {\bf branch set} $\Gamma^{(k)}$ is defined by letting
\begin{align}\label{g}
\Gamma^{(k)}=
\begin{cases}
\{0,1\},&k=1;\\
\{0\}\cup(\Gamma^{(k-1)})^{P},&k\geq2.
\end{cases}
\end{align}
\end{defi}

\begin{rema}
The function of the branch set is used to label or follow every term w.r.t. the initial data in the Picard iteration. This is also the beginning of the combinatorial analysis applied to the Cauchy problem of the nonlinear infinite system of coupled ODEs in a way of the alternating discrete convolution of higher dimensions.
\end{rema}
Some essential concepts and useful notations related to the alternating discrete convolution of higher dimensions in the Picard iteration will be introduced. On each branch, we will define the first counting function $\sigma$ for the numbers of the initial data (see Definition \autoref{cf}), the second counting function $\ell$ for the times of the alternating discrete convolution of higher dimensions (see Definition \ref{cf2}), the combinatorial lattice space $\cN$ (see Definition \ref{cls}),  the combinatorial alternating sums $\cc\ca\cs$ (see Definition \ref{defias}).

First, on each branch, we define the counting functions $\sigma$ and $\ell$.
\begin{defi}
\label{cf}
The {\bf first counting function} $\sigma$ ($2p\sigma$
indeed) acting on the branch set is defined by letting
\begin{align}\label{s}
\sigma(\gamma^{(k)})=
\begin{cases}
\frac{1}{2p},&\gamma^{(k)}=0\in\Gamma^{(k)},k\geq1;\\
\frac{P}{2p},&\gamma^{(1)}=1\in\Gamma^{(1)};\\
\sum_{j=1}^{P}\sigma(\gamma_j^{(k-1)}),&\gamma^{(k)}=(\gamma_j^{(k-1)})_{1\leq j\leq P}\in(\Gamma^{(k-1)})^{P},k\geq2.
\end{cases}
\end{align}
\end{defi}

\begin{defi}\label{cf2}
The {\bf second counting function} $\ell$ ($2p\ell$ indeed) acting on the branch set is defined by letting
\begin{align}\label{ee}
\ell(\gamma^{(k)})=
\begin{cases}
0,&\gamma^{(k)}=0\in\Gamma^{(k)},k\geq1;\\
1,&\gamma^{(1)}=1\in\Gamma^{(1)};\\
1+\sum_{j=1}^{P}\ell(\gamma_j^{(k-1)}),&\gamma^{(k)}=(\gamma_j^{(k-1)})_{1\leq j\leq P}\in(\Gamma^{(k-1)})^{P},k\geq2.
\end{cases}
\end{align}
\end{defi}

\begin{rema}\label{itu}
There is an intuition for these two counting functions $\sigma$ and $\ell$.
In fact, $2p\sigma$ depicts the degree/multiplicity of nonlinearity in the sense of combinatorics, that is, the number of the initial Fourier data on each branch in the Picard iteration; and $2p\ell$ stands for the number of integrations.
\end{rema}

With Remark \ref{itu}
in mind, we can directly obtain the relation between $\sigma$ and $\ell$ (see Proposition \ref{propsl}), and the parity of $\sigma$ and $\ell$ (see Proposition \ref{lemms}).

\begin{prop}\label{propsl}
For all $k\geq1$, we have
\begin{align}\label{ell}
\sigma(\gamma^{(k)})=\ell(\gamma^{(k)})+\frac{1}{2p}.
\end{align}
\end{prop}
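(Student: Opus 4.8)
The plan is to prove \eqref{ell} by induction on $k$, following the recursive structure of the branch set $\Gamma^{(k)}$ in \eqref{g} and the defining recursions \eqref{s} and \eqref{ee} for $\sigma$ and $\ell$.

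\textbf{Base case.} For $k=1$ we check \eqref{ell} on the two elements of $\Gamma^{(1)}=\{0,1\}$ directly from the definitions. If $\gamma^{(1)}=0$, then $\sigma(\gamma^{(1)})=\frac{1}{2p}$ and $\ell(\gamma^{(1)})=0$, so $\sigma(\gamma^{(1)})=\ell(\gamma^{(1)})+\frac{1}{2p}$. If $\gamma^{(1)}=1$, then $\sigma(\gamma^{(1)})=\frac{P}{2p}=\frac{2p+1}{2p}$ and $\ell(\gamma^{(1)})=1$, so again $\ell(\gamma^{(1)})+\frac{1}{2p}=1+\frac{1}{2p}=\frac{2p+1}{2p}=\sigma(\gamma^{(1)})$. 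This settles $k=1$.

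\textbf{Inductive step.} Assume \eqref{ell} holds for all elements of $\Gamma^{(k-1)}$, for some $k\geq2$; I want it for all $\gamma^{(k)}\in\Gamma^{(k)}=\{0\}\cup(\Gamma^{(k-1)})^{P}$. If $\gamma^{(k)}=0$, then $\sigma(\gamma^{(k)})=\frac{1}{2p}$ and $\ell(\gamma^{(k)})=0$ by the first lines of \eqref{s} and \eqref{ee}, so the identity holds. Otherwise $\gamma^{(k)}=(\gamma^{(k-1)}_j)_{1\leq j\leq P}\in(\Gamma^{(k-1)})^{P}$, and the third lines of \eqref{s} and \eqref{ee} give
\begin{align*}
\sigma(\gamma^{(k)})=\sum_{j=1}^{P}\sigma(\gamma^{(k-1)}_j),\qquad
\ell(\gamma^{(k)})=1+\sum_{j=1}^{P}\ell(\gamma^{(k-1)}_j).
\end{align*}
Applying the induction hypothesis to each $\gamma^{(k-1)}_j$ and summing,
\begin{align*}
\sigma(\gamma^{(k)})=\sum_{j=1}^{P}\Bigl(\ell(\gamma^{(k-1)}_j)+\frac{1}{2p}\Bigr)
=\Bigl(\sum_{j=1}^{P}\ell(\gamma^{(k-1)}_j)\Bigr)+\frac{P}{2p}.
\end{align*}
Since $P=2p+1$, we have $\frac{P}{2p}=1+\frac{1}{2p}$, hence the right-hand side equals $1+\sum_{j=1}^{P}\ell(\gamma^{(k-1)}_j)+\frac{1}{2p}=\ell(\gamma^{(k)})+\frac{1}{2p}$, which is \eqref{ell} for $\gamma^{(k)}$. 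This completes the induction.

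There is essentially no obstacle here: the statement is a bookkeeping identity and the only mildly delicate point is keeping track of the constant, namely that the "excess" contributed at a branching node is exactly $\frac{P}{2p}-\sum(\text{child excesses})$; the identity $\frac{P}{2p}=1+\frac{1}{2p}$ (equivalently $P=2p+1$) is what makes the extra $+1$ integration in the $\ell$-recursion line up with the $P$ copies of $\frac{1}{2p}$ in the $\sigma$-recursion, so that a single net $\frac{1}{2p}$ survives. One should just take care that the base case $k=1$ is verified on both elements of $\Gamma^{(1)}$ separately, since the recursion for $k\geq2$ does not reach the singleton branch $\gamma^{(1)}=1$.
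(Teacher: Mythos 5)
Your proof is correct and follows essentially the same route as the paper: induction on $k$ using the recursions \eqref{s} and \eqref{ee}, with the base case checked on both elements of $\Gamma^{(1)}$ and the identity $\frac{P}{2p}=1+\frac{1}{2p}$ closing the inductive step. If anything, your write-up is slightly more careful than the paper's (which leaves the $P=2p+1$ cancellation implicit and has a couple of typographical slips in the displayed computation), but the argument is the same.
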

\begin{proof}
It is obvious that \eqref{ell} holds for all $0=\gamma^{(k)}\in\Gamma^{(k)}$, where $k\geq1$, and $1=\gamma^{(1)}\in\Gamma^{(1)}$. This shows that \eqref{ell} holds for $k=1$.

Let $k\geq2$. Assume that \eqref{ell} is true for all $1<k^\prime<k$.

For $(\gamma_j^{(k-1)})_{1\leq j\leq P}=\gamma^{(k)}\in(\Gamma^{(k-1)})^{P}$,
it follows from the definitions \eqref{s} and \eqref{ee}, and the induction hypothesis that
\begin{align*}
\sigma(\gamma_j^{(k-1)})=&\sum_{j=1}^P\sigma(\gamma_j^{(k-1)})\\
=&\sum_{j=1}^P\left(\ell(\gamma_j^{(k-1)})+
\frac{1}{2p}\right)\\
=&\ell(\gamma^{(k-1)})+\frac{1}{2p}.
\end{align*}
This proves that \eqref{ell} holds for $k$, and hence for all $k\geq1$ by induction.
This completes the proof of Proposition \ref{propsl}.
\end{proof}




\begin{prop}\label{lemms}
For all $k\geq1$, $2p\sigma(\gamma^{(k)})$ is odd and $2p\ell(\gamma^{(k)})$ is even on each branch $\gamma^{(k)}\in\Gamma^{(k)}$.
\end{prop}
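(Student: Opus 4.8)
The plan is to prove both parity claims simultaneously by induction on $k$, using Proposition \ref{propsl} to reduce the second claim to the first. First I would establish the base case $k=1$: on $\Gamma^{(1)}=\{0,1\}$, the value $2p\sigma(0)=1$ is odd and $2p\sigma(1)=P=2p+1$ is odd, while $2p\ell(0)=0$ and $2p\ell(1)=2$ are even. So both statements hold for $k=1$.

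For the inductive step, suppose $k\geq2$ and the claim holds for all $1\leq k'<k$. Take $\gamma^{(k)}\in\Gamma^{(k)}$. If $\gamma^{(k)}=0$, then $2p\sigma=1$ and $2p\ell=0$ directly. Otherwise $\gamma^{(k)}=(\gamma_j^{(k-1)})_{1\leq j\leq P}\in(\Gamma^{(k-1)})^P$, and by \eqref{s},
\[
2p\sigma(\gamma^{(k)})=\sum_{j=1}^{P}2p\sigma(\gamma_j^{(k-1)}).
\]
By the induction hypothesis each summand $2p\sigma(\gamma_j^{(k-1)})$ is odd; since $P=2p+1$ is odd, this is a sum of an odd number of odd integers, hence odd. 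This proves the first claim for $k$. The second claim then follows immediately from Proposition \ref{propsl}: multiplying \eqref{ell} by $2p$ gives $2p\ell(\gamma^{(k)})=2p\sigma(\gamma^{(k)})-1$, which is odd minus one, hence even. (Alternatively one could argue directly from \eqref{ee}: $2p\ell(\gamma^{(k)})=2p+\sum_{j=1}^P 2p\ell(\gamma_j^{(k-1)})$ is even plus a sum of evens.) This closes the induction.

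I do not anticipate a genuine obstacle here; the only point requiring care is the combinatorial fact that a sum of an odd number ($P=2p+1$ of them) of odd integers is odd, which is what makes the parity propagate up the tree — had $P$ been even, oddness would not survive. It is worth flagging explicitly in the write-up that the hypothesis $P=2p+1$ (equivalently, that the nonlinearity $|u|^{2p}u$ has odd total degree $2p+1$) is exactly what is being used, since this parity is precisely what later guarantees that the total-distance constraint $\sum_{j}(-1)^{j-1}n_j=n$ in \eqref{adc} is consistent across iterations and that the $\ast^{[\cdot]}$ bookkeeping closes up correctly.
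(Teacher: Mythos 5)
Your proof is correct and follows essentially the same route as the paper: induction on $k$ showing $2p\sigma(\gamma^{(k)})$ is a sum of $P=2p+1$ odd integers (hence odd), then deducing evenness of $2p\ell$ from Proposition \ref{propsl}. The only slip is cosmetic: in the base case $2p\ell(1)=2p$, not $2$, but it is even either way, so the argument stands.
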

\begin{proof}
It's obvious that $2p\sigma(0)=1$ and $2p\sigma(1)=P$. This shows that this conclusion is true for $k=1$.

Let $k\geq2$. Assume that $2p\sigma(\gamma^{(k^\prime)})$ is odd for all $1<k^\prime<k$.

For $k$, we have $2p\sigma(0)=1$. In addition, for $\gamma^{(k)}=(\gamma_j^{(k-1)})_{1\leq j\leq P}$, by the definition \eqref{s} of $\sigma$, one can derive that
\[2p\sigma(\gamma^{(k)})=2p\sigma(\gamma_1^{(k-1)})+\cdots+2p\sigma(\gamma_{P}^{(k-1)}).\]
It follows from the induction hypothesis that $2p\sigma(\gamma_j^{(k-1)})$ is odd for all $j=1,\cdots,P$. Hence $2p\sigma(\gamma^{(k)})$ can be viewed as {sums} of $P$ odd numbers, this implies that $2p\sigma(\gamma^{(k)})$
 is odd.

By induction, we prove that $2p\sigma(\gamma^{(k)})$ is odd for all $k\geq1$.

It follows from the above proof and the relation \eqref{propsl} between $\sigma$ and $\ell$ that $2p\ell$ is even. This completes the proof of Proposition \ref{lemms}.
\end{proof}

\begin{rema}
It should be emphasized that Proposition \ref{lemms} will play an essential role in the proofs of Lemma~\ref{lemas} and Lemma~\ref{ca}.
\end{rema}

Next, on each branch, we introduce the domain of the alternating discrete convolution of higher dimension, that is, the so-called combinatorial lattice space (see Definition \ref{cls}), and the function, associated with the alternating discrete convolution of higher dimension, defined on the combinatorial lattice space, that is, the so-called combinatorial alternating sums (see Definition \ref{defias}).

\begin{defi}
\label{cls}
The {\bf {combinatorial} lattice space} $\cN^{(k,\gamma^{(k)})}$ {originated from} $\bZ^\nu$ on each branch is defined by letting
\begin{align}\label{n}
&\cN^{(k,\gamma^{(k)})}=
\begin{cases}
\bZ^\nu,&\gamma^{(k)}=0\in\Gamma^{(k)},k\geq1;\\
(\bZ^\nu)^{P},&\gamma^{(1)}=1\in\Gamma^{(1)};\\
\prod_{j=1}^{P}\cN^{(k-1,\gamma_j^{(k-1)})},&\gamma^{(k)}=(\gamma_j^{(k-1)})_{1\leq j\leq P}\in(\Gamma^{(k-1)})^{P},k\geq2.
\end{cases}
\end{align}
\end{defi}

\begin{defi}\label{defias}
The {\bf combinatorial alternating sum{s}}, denoted by $\cc\ca\cs(n^{(k)})$, of $n^{(k)}\in\cN^{(k,\gamma^{(k)})}$, is defined by letting
\begin{align}\label{as1}
&\cc\ca\cs(n^{(k)})=
\begin{cases}
n^{(k)},&\gamma^{(k)}=0\in\Gamma^{(k)},n^{(k)}\in\cN^{(k,0)}, k\geq1;\\
\sum_{j=1}^{P}(-1)^{j-1}n_j,&\gamma^{(1)}=1\in\Gamma^{(1)},n^{(1)}=(n_j)_{1\leq j\leq P}\in(\bZ^\nu)^{P};\\
\sum_{j=1}^{P}(-1)^{j-1}\cc\ca\cs(n_j^{(k-1)}),&\gamma^{(k)}=(\gamma_j^{(k-1)})_{1\leq j\leq P}\in(\Gamma^{(k-1)})^{P},\\
&n^{(k)}=(n_j^{(k-1)})_{1\leq j\leq {P}}\in\prod_{j=1}^{P}\cN^{(k-1,\gamma_j^{(k-1)})}, k\geq2.
\end{cases}
\end{align}
\end{defi}

As to the combinatorial lattice space and the combinatorial alternating sums, from the point of the generating lattice space $\bZ^\nu$, we know the number of components in $\cN$ (see Proposition \ref{2ps}), and the shape of $\cc\ca\cs$ (see Proposition \ref{lemas}).

\begin{prop}\label{2ps}
Let dim$_{\bZ^{\nu}}\cN^{(k,\gamma^{(k)})}$ be {\bf the number of components in $\cN^{(k,\gamma^{(k)})}$ per $\bZ^\nu$}. Then
\begin{align}\label{dimsi}
\dim_{\bZ^\nu}\cN^{(k,\gamma^{(k)})}=
2p\sigma(\gamma^{(k)}),\quad\forall k\geq1.
\end{align}
That is,
\[\cN^{(k,\gamma^{(k)})}=(\mathbb Z^\nu)^{2p\sigma(\gamma^{(k)})}.\]
\begin{proof}
For $\gamma^{(k)}=0\in\Gamma^{(k)}$, where $k\geq1$, it follows from $\cN^{(k,0)}=\bZ^\nu$, the definition of $\dim_{\bZ^\nu}\cN$, and the definition \eqref{s} of $\sigma$ that $\dim_{\bZ^\nu}\cN^{(k,0)}=1=2p\sigma(0)$. This shows that \eqref{dimsi} holds for all $\gamma^{(k)}=0\in\Gamma^{(k)}$, where $k\geq1$.

For $\gamma^{(1)}=1\in\Gamma^{(1)}$, $\cN^{(1,1)}=(\bZ^\nu)^P$ implies that $\dim_{\bZ^\nu}\cN^{(1,1)}=P=2p\sigma(1)$. Hence \eqref{dimsi} is true for $\gamma^{(1)}\in\Gamma^{(1)}$.

For $k\geq2$. Assume that \eqref{dimsi} holds for all $1\leq k^\prime<k$.

For $k$ and $\gamma^{(k)}=(\gamma_j^{(k-1)})_{1\leq j\leq P}\in(\Gamma^{(k-1)})^{P}$, by the definitions of $\cN, \dim_{\bZ^\nu}\cN, \sigma$ and induction hypothesis, one can derive that
\begin{align*}
\dim_{\bZ^\nu}\cN^{(k,\gamma^{(k)})}
&=\sum_{j=1}^{P}\dim_{\bZ^\nu}\cN^{(k-1,\gamma_j^{(k-1)})}\\
&=\sum_{j=1}^{P}2p\sigma(\gamma_j^{(k-1)})\\
&=2p\sigma(\gamma^{(k)}).
\end{align*}
This proves that \eqref{dimsi} is true for $k$.

By induction, \eqref{dimsi} holds for all $k\geq1$. This completes the proof of Proposition \ref{2ps}.
\end{proof}

\end{prop}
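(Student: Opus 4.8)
The plan is to prove \eqref{dimsi} by induction on the level $k$, exploiting the fact that the recursive definition \eqref{n} of the combinatorial lattice space $\cN^{(k,\gamma^{(k)})}$ and the recursive definition \eqref{s} of the first counting function $\sigma$ are generated by the very same branching rule over $\Gamma^{(k)}$, together with the elementary observation that the number of $\bZ^\nu$-components is additive under Cartesian products, i.e.
\[
\dim_{\bZ^\nu}\left(A_1\times\cdots\times A_P\right)=\sum_{j=1}^{P}\dim_{\bZ^\nu}A_j
\]
whenever each $A_j$ is itself a finite Cartesian power of $\bZ^\nu$ (which, by the induction, will always be the case for the lattice spaces involved). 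Thus the statement reduces to checking that the two quantities $\dim_{\bZ^\nu}\cN^{(k,\gamma^{(k)})}$ and $2p\sigma(\gamma^{(k)})$ obey the same recursion with the same initial data.

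First I would record the base data. For $k=1$ the branch set is $\Gamma^{(1)}=\{0,1\}$: if $\gamma^{(1)}=0$ then $\cN^{(1,0)}=\bZ^\nu$ gives $\dim_{\bZ^\nu}\cN^{(1,0)}=1=2p\sigma(0)$; if $\gamma^{(1)}=1$ then $\cN^{(1,1)}=(\bZ^\nu)^{P}$ gives $\dim_{\bZ^\nu}\cN^{(1,1)}=P=2p\sigma(1)$. This settles $k=1$. Here it is important to note that the constant branch $\gamma^{(k)}=0\in\Gamma^{(k)}$ recurs at \emph{every} level $k\geq1$, and in that case $\cN^{(k,0)}=\bZ^\nu$ and $2p\sigma(0)=1$ by definition, so \eqref{dimsi} holds for this branch at every level directly, with no appeal to the inductive hypothesis.

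For the inductive step, fix $k\geq2$, assume \eqref{dimsi} for all levels $<k$, and take $\gamma^{(k)}\in\Gamma^{(k)}$. If $\gamma^{(k)}=0$ we are done by the previous remark. Otherwise $\gamma^{(k)}=(\gamma_j^{(k-1)})_{1\leq j\leq P}\in(\Gamma^{(k-1)})^{P}$, so \eqref{n} yields $\cN^{(k,\gamma^{(k)})}=\prod_{j=1}^{P}\cN^{(k-1,\gamma_j^{(k-1)})}$, and applying additivity of $\dim_{\bZ^\nu}$, then the inductive hypothesis at level $k-1$, then the recursive clause of \eqref{s},
\[
\dim_{\bZ^\nu}\cN^{(k,\gamma^{(k)})}=\sum_{j=1}^{P}\dim_{\bZ^\nu}\cN^{(k-1,\gamma_j^{(k-1)})}=\sum_{j=1}^{P}2p\sigma(\gamma_j^{(k-1)})=2p\sigma(\gamma^{(k)}),
\]
which closes the induction and, reading the chain of equalities structurally, also gives the displayed identity $\cN^{(k,\gamma^{(k)})}=(\bZ^\nu)^{2p\sigma(\gamma^{(k)})}$.

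There is no genuine analytic obstacle here; the only point requiring care is to set up the notion ``number of components per $\bZ^\nu$'' so that its additivity over Cartesian products is literally a definition-chase, and to keep the two parallel recursions aligned — in particular not forgetting the $\gamma^{(k)}=0$ branch that reappears at higher levels. Once that bookkeeping is in place, the argument is a one-line structural induction. (Alternatively, one could bypass the induction by observing, in the spirit of Remark \ref{itu}, that $2p\sigma(\gamma^{(k)})$ counts precisely the leaves carrying initial data on the branch $\gamma^{(k)}$, while $\cN^{(k,\gamma^{(k)})}$ carries exactly one $\bZ^\nu$-factor per such leaf; but the induction above is cleaner to write down.)
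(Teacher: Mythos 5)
Your proposal is correct and follows essentially the same structural induction as the paper: the same base cases (the branch $\gamma^{(k)}=0$ at every level and $\gamma^{(1)}=1$), and the same inductive step combining additivity of $\dim_{\bZ^\nu}$ over Cartesian products with the recursive clause of \eqref{s}. No substantive difference from the paper's argument.
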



\begin{rema}
By Proposition \ref{2ps}, for any $k\geq1$, it is reasonable to set $n^{(k)}=(m_j)_{1\leq j\leq 2p\sigma(\gamma^{(k)})}$, where $m_j\in\bZ^\nu$ for all $j=1,\cdots,P$.
\end{rema}

\begin{prop}\label{lemas}For all $k\geq1$, we have
\begin{align}\label{as2}
\cc\ca\cs(n^{(k)})=\sum_{j=1}^{2p\sigma(\gamma^{(k)})}(-1)^{j-1}m_j.
\end{align}
\end{prop}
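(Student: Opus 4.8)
The plan is to prove \eqref{as2} by induction on $k$, mirroring the recursive structure already used for Propositions~\ref{propsl}, \ref{lemms}, and \ref{2ps}. The key point is that the expansion $n^{(k)}=(m_j)_{1\leq j\leq 2p\sigma(\gamma^{(k)})}$ (legitimized by the Remark following Proposition~\ref{2ps}) identifies $\cN^{(k,\gamma^{(k)})}$ with $(\bZ^\nu)^{2p\sigma(\gamma^{(k)})}$, and under this identification the claim says that $\cc\ca\cs$ is simply the alternating sum of all coordinates, \emph{in the order induced by the nesting}. So the real content is a bookkeeping statement: the sign $(-1)^{j-1}$ attached to the $j$-th entry in the flattened tuple agrees with the product of signs accumulated along the path from the root to that leaf in the recursion \eqref{as1}.

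First I would dispatch the base cases. For $\gamma^{(k)}=0\in\Gamma^{(k)}$ ($k\geq1$), we have $2p\sigma(0)=1$, $n^{(k)}=m_1\in\bZ^\nu$, and \eqref{as1} gives $\cc\ca\cs(n^{(k)})=n^{(k)}=m_1=\sum_{j=1}^{1}(-1)^{j-1}m_j$. For $\gamma^{(1)}=1\in\Gamma^{(1)}$, we have $2p\sigma(1)=P$, and the second line of \eqref{as1} reads $\cc\ca\cs(n^{(1)})=\sum_{j=1}^{P}(-1)^{j-1}n_j$, which is exactly \eqref{as2} since here $(m_j)_{1\leq j\leq P}=(n_j)_{1\leq j\leq P}$. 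This settles $k=1$ and the trivial branch for all $k$.

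For the inductive step, let $k\geq2$, assume \eqref{as2} holds for all $1\leq k'<k$, and take $\gamma^{(k)}=(\gamma_j^{(k-1)})_{1\leq j\leq P}\in(\Gamma^{(k-1)})^{P}$ with $n^{(k)}=(n_j^{(k-1)})_{1\leq j\leq P}$. By the third line of \eqref{as1}, $\cc\ca\cs(n^{(k)})=\sum_{i=1}^{P}(-1)^{i-1}\cc\ca\cs(n_i^{(k-1)})$. Apply the induction hypothesis to each block: writing $S_i=2p\sigma(\gamma_i^{(k-1)})$ and expanding $n_i^{(k-1)}=(m^{(i)}_{t})_{1\leq t\leq S_i}$, we get $\cc\ca\cs(n_i^{(k-1)})=\sum_{t=1}^{S_i}(-1)^{t-1}m^{(i)}_t$. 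Now I need the concatenation lemma: under the identification of $\cN^{(k,\gamma^{(k)})}$ with $(\bZ^\nu)^{\sum_i S_i}$ the global index $j$ of the entry $m^{(i)}_t$ is $j=S_1+\cdots+S_{i-1}+t$, so that $\sum_{j=1}^{2p\sigma(\gamma^{(k)})}(-1)^{j-1}m_j=\sum_{i=1}^{P}(-1)^{S_1+\cdots+S_{i-1}}\sum_{t=1}^{S_i}(-1)^{t-1}m^{(i)}_t$. Comparing with the previous display, it remains to check $(-1)^{i-1}=(-1)^{S_1+\cdots+S_{i-1}}$ for each $i$, i.e.\ that $S_1+\cdots+S_{i-1}\equiv i-1\pmod 2$; but each $S_m=2p\sigma(\gamma_m^{(k-1)})$ is odd by Proposition~\ref{lemms}, so a sum of $i-1$ of them has the parity of $i-1$. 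This is precisely where Proposition~\ref{lemms} is used. Combining the two displays yields \eqref{as2} for $k$, completing the induction.

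The only genuine obstacle is purely notational: making the flattening of the nested tuple and the corresponding block-offset formula $j=S_1+\cdots+S_{i-1}+t$ precise and consistent with whatever convention \eqref{n} implicitly fixes for $\prod_{j=1}^{P}\cN^{(k-1,\gamma_j^{(k-1)})}$. Once that convention is pinned down, the parity computation via Proposition~\ref{lemms} is immediate and the rest is a one-line rearrangement of a double sum. I would state the concatenation/offset identity explicitly as a displayed equation before invoking it, to keep the induction clean.
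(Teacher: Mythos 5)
Your proposal is correct and follows essentially the same route as the paper: induction on $k$ with the same base cases, blockwise application of the induction hypothesis, flattening with the offsets $S_1+\cdots+S_{i-1}$, and the crucial parity identity $(-1)^{i-1}=(-1)^{S_1+\cdots+S_{i-1}}$ supplied by Proposition~\ref{lemms}. The paper's proof carries out exactly this re-indexing (writing the sign as $(-1)^{j-1-\sum_{j_0}2p\sigma(\gamma_{j_0}^{(k-1)})}=(-1)^{j-1}(-1)^{j^\prime-1}$ and cancelling), so there is no substantive difference.
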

\begin{proof}
For $\gamma^{(k)}=0\in\Gamma^{(k)}$, set $m_1=n^{(k)}\in\cN^{(k,0)}$, where $k\geq1$. In this case, by definition \eqref{s}, we know that $2p\sigma(0)=1$. Furthermore, it follows from definition \eqref{as1} that \begin{align*}
\cc\ca\cs(n^{(k)})&=m_1\\
&=\sum_{j=1}^{2p\sigma(0)}(-1)^{j-1}m_j.
\end{align*} This shows that \eqref{as2} holds for all $\gamma^{(k)}=0\in\Gamma^{(k)}$, where $k\geq1$.

For $\gamma^{(1)}=1\in\Gamma^{(1)}$, set $(m_j)_{1\leq j\leq P}=n^{(1)}\in(\bZ^\nu)^{P}$. By definition \eqref{s}, we know that $2p\sigma(1)=P$. It  follows from definition \eqref{as1} that \begin{align*}\cc\ca\cs(n^{(1)})&=\sum_{j=1}^{P}(-1)^{j-1}m_j\\
&=\sum_{j=1}^{2p\sigma(1)}(-1)^{j-1}m_j.
\end{align*} Hence \eqref{as2} holds true for $k=1$.

For $k\geq2$. Assume that \eqref{as2} is true for all $1<k^\prime<k$.

For $k$, we consider only the remaining case, that is, $\gamma^{(k)}=(\gamma_j^{(k-1)})_{1\leq j\leq P}\in(\Gamma^{(k-1)})^{P}$ and $n^{(k)}=(n_j^{(k-1)})_{1\leq j\leq P}\in\prod_{j=1}^{P}\cN^{(k-1,\gamma^{(k-1)})}$, where
\begin{align*}
&n_1^{(k-1)}=(m_{j})_{1\leq j\leq2p\sigma(\gamma_1^{(k-1)})};\\
&n_{j^\prime}^{(k-1)}=(m_{j})_{2p\sum_{j=1}^{j^\prime-1}\sigma(\gamma_j^{(k-1)})+1\leq j\leq2p\sum_{j=1}^{j^\prime}\sigma(\gamma_j^{(k-1)})},\quad j^\prime=2,\cdots,P.
\end{align*}
By the definitions of \eqref{as1} and \eqref{s}, and Lemma \ref{lemms}, one can derive that
\begin{align*}
\cc\ca\cs(n^{(k)})
&=\sum_{j^\prime=1}^{P}(-1)^{j^\prime-1}
\cc\ca\cs(n_{j^\prime}^{(k-1)})\\
&=\cc\ca\cs(n_1^{(k-1)})+\sum_{j^\prime=2}^{P}(-1)^{j^\prime-1}\cc\ca\cs(n_{j^\prime}^{(k-1)})\\
&=\sum_{j=1}^{2p\sigma(\gamma_1^{(k-1)})}(-1)^{j-1}m_j+
\sum_{j^\prime=2}^{P}(-1)^{j^\prime-1}\sum_{j=1}^{2p\sigma(\gamma_{j^\prime}^{(k-1)})}(-1)^{j-1}\left(n_{j^\prime}^{(k-1)}\right)_j\\
&=\sum_{j=1}^{2p\sigma(\gamma_1^{(k-1)})}(-1)^{j-1}m_j+
\sum_{j^\prime=2}^{P}(-1)^{j^\prime-1}\sum_{j=1}^{2p\sigma(\gamma_{j^\prime}^{(k-1)})}(-1)^{j-1}m_{\sum_{j_0=1}^{j^\prime-1}2p\sigma(\gamma_{j_0}^{(k-1)})+j}\\
&=\sum_{j=1}^{2p\sigma(\gamma_1^{(k-1)})}(-1)^{j-1}m_j+
\sum_{j^\prime=2}^{P}(-1)^{j^\prime-1}
\sum_{j=\sum_{j_0=1}^{j^\prime-1}2p\sigma(\gamma_{j_0}^{(k-1)})+1}^{\sum_{j_0=1}^{j^\prime}2p\sigma(\gamma_{j_0}^{(k-1)})}(-1)^{j-1-\sum_{j_0=1}^{j^\prime-1}2p\sigma(\gamma_{j_0}^{(k-1)})}m_{j}\\
&=\sum_{j=1}^{2p\sigma(\gamma_1^{(k-1)})}(-1)^{j-1}m_j+
\sum_{j^\prime=2}^{P}(-1)^{j^\prime-1}
\sum_{j=\sum_{j_0=1}^{j^\prime-1}2p\sigma(\gamma_{j_0}^{(k-1)})+1}^{\sum_{j_0=1}^{j^\prime}2p\sigma(\gamma_{j_0}^{(k-1)})}(-1)^{j-1}\cdot(-1)^{j^\prime-1}m_{j}\\
&=\sum_{j=1}^{2p\sigma(\gamma_1^{(k-1)})}(-1)^{j-1}m_j+
\sum_{j^\prime=2}^{P}
\sum_{j=\sum_{j_0=1}^{j^\prime-1}2p\sigma(\gamma_{j_0}^{(k-1)})+1}^{\sum_{j_0=1}^{j^\prime}2p\sigma(\gamma_{j_0}^{(k-1)})}(-1)^{j-1}m_{j}\\
&=\sum_{j=1}^{2p\sigma(\gamma^{(k)})}(-1)^{j-1}m_j.
\end{align*}
This proves that \eqref{as2} is true for $k$.

It follows from induction that \eqref{as2} holds for all $k\geq1$. This completes the proof of Lemma \autoref{lemas}.
\end{proof}

\begin{rema}
The above proof is analytical, which seems complicated. Below we will introduce a Feynman diagram to better understand it.
\end{rema}

\subsection{Notations of $\ll$ and $|\cdot|$ }{Let ``$\ll$" be ``$\leq$" in the sense of $|\cdot|$, that is, $Q_1\ll Q_2$ means that $|Q_1|\leq Q_2$.

Throughout this paper, we will abuse the symbol of $|\cdot|${, without causing any confusion, to stand for the modulus of a complex number, the absolute value of a real number, the $\ell^1$-norm of a vector, or the length of a multi-index}. That is,
\begin{itemize}
  \item $|z|=\sqrt{z\overline{z}}$, where $z\in\bC$;
  \item $|n\cdot\omega|$ stands for the absolute value of $n\cdot\omega\in\bR$;
  \item $|n|=\sum_{j=1}^{\nu}|n_j|$ and $|\omega|=\sum_{j=1}^{\nu}|\omega_j|$, where $n=(n_j)_{1\leq j\leq\nu}\in\bZ^\nu$ and $\omega=(\omega_j)_{1\leq j\leq\nu}\in\bR^\nu$;
      \item $|\alpha|=\sum_{j=1}^{r}\alpha_j$, where $\alpha=(\alpha_1,\cdots,\alpha_r)\in\bN^r$.
\end{itemize}
As to $|\cdot|$, we have
\begin{align}\label{ine1}
n\cdot\omega\ll|n||\omega|
\end{align}
and the triangle inequality:
\begin{align}\label{ine2}
v_1+v_2\ll|v_1|+|v_2|,\quad\forall~v_1,v_2\in\bR^\nu.
\end{align}
}
\section{cNLS}\label{seccnls}
In this section, we consider the following nonlinear Schr\"odinger equation with higher order algebraic power-type nonlinearity
\eqref{cnls}
with quasi-periodic initial data \eqref{id0}
on the real line $\bR$, where $p\in\bN$ and $\lambda=\pm1$ denotes the focusing case $(\lambda=+1)$ and defocusing case ($\lambda=-1$) respectively,
$\{c(n)\}_{n\in\mathbb Z}$ is a sequence of the initial Fourier data, and it is $\mathtt r$-polynomially decaying, or rather, there exists a pair of constants $(A,\mathtt r)\in(0,\infty)\times(0,\infty)$ such that
\begin{align}\label{pd}
c(n)\ll A^{\frac{1}{2p}}(1+|n|)^{-\mathtt r},\quad\forall n\in\bZ^\nu.
\end{align}


{For the quasi-periodic Cauchy problem \eqref{cnls}-\eqref{id0}, our main result is the following Theorem \ref{cnlsth}.}
\begin{theo}[cNLS]\label{cnlsth}
If $2\leq\nu<\min\left\{\frac{\mathtt r}{2}-2,\frac{\mathtt r}{4}\right\}=\frac{\mathtt r}{4}$, where $\mathtt r>8$, and the initial Fourier data $c$ is $\mathtt r$-polynomially decaying, i.e., it satisfies the polynomial decay condition \eqref{pd}, then
\begin{enumerate}
  \item (Existence)~the quasi-periodic Cauchy problem \eqref{cnls}-\eqref{id0} has a spatially quasi-periodic solution \eqref{ses} with the same frequency vector as the initial data (i.e., it {retains} the same spatial quasi-periodicity) defined on $[0,t_0]\times\bR$, where $t_0=A^{-1}\left(\mathfrak b(\mathtt r/2;\nu)\right)^{-2p}(2p)^{2p}P^{-P}$ (see \eqref{t0});
  \item (Decay and smoothness)~the spatially quasi-periodic solution \eqref{ses} is, {uniformly in $t$}, $\mathtt r/2$-polynomially decaying (with a slight worse decay rate), that is, \begin{align}\label{sdecay}
      |c(t,n)|\lesssim (1+|n|)^{-\mathtt r/2},\quad \forall (t,n)\in[0,t_0]\times\mathbb Z^\nu.
       \end{align}
       Hence this solution is in the classical sense;
  \item (Uniqueness)~the spatially quasi-periodic solution \eqref{ses} with polynomially decaying Fourier coefficients \eqref{sdecay} is unique on $[0,t_0]\times\bR$;
  \item (Up to the assigned time horizon)~for any given $T>0$, if the decay rate $\mathtt r$, the amplitude $A$, the dimension $\nu$ and the degree of nonlinearity $p$ satisfy \[A^{-1}\left(\mathfrak b(\mathtt r/2;\nu)\right)^{-2p}(2p)^{2p}P^{-P}\geq T,\]
then the unique spatially quasi-periodic solution is well-defined on $[0,T]\times\bR$.
\item (Asymptotic dynamics)~Consider the quasi-periodic Cauchy problem for the standard NLS \eqref{cnls} with small nonlinearity depicted by a small parameter $\epsilon: 0<|\epsilon|\ll1$, that is,
\begin{align}\label{ecnls}
\tag{$\epsilon$-cNLS}{\rm i}\partial_tu+\partial_{x}^2u+\epsilon|u|^{2p}u=0,\quad 2\leq p\in\mathbb N.
\end{align}
Let $u^\epsilon$ and $u_{\text{linear}}$  respectively be nonlinear and linear solutions to \eqref{ecnls}.
Then for $t=|\epsilon|^{-1+\eta}\leq|\epsilon|^{-1}\sim T_\epsilon$ with $0<\eta\ll1$, as $\epsilon\rightarrow0$, we have
\begin{align}
L^\infty\text{-asymptoticity}:\quad&\|u^\epsilon(t)-u_{\text{linear}}(t)\|_{L_x^\infty(\mathbb R)}\rightarrow 0;\\
\text{Sobolev asymptoticity}:\quad&
\|u^\epsilon(t)-u_{\text{linear}}(t)\|_{H_x^{s}(\mathbb R)}\rightarrow 0, \quad(s<\mathtt r/4-\nu/2),
\end{align}
where $\|f\|_{L_x^{\infty}(\mathbb R)}=\max_{x\in\mathbb R}|f(x)|$ and $\|f\|_{H_x^s(\mathbb R)}
=\left\{\sum_{m=0}^s\|\langle n\rangle^m\hat{f}(\langle n\rangle)\|_{\ell_{n}^2(\mathbb Z^\nu)}\right\}^{1/2}$.
\end{enumerate}
\end{theo}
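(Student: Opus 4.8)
The plan is to turn the Cauchy problem \eqref{cnls}--\eqref{id0} into the infinite coupled ODE system for the Fourier coefficients $c(t,n)$, run the Picard iteration in its integral (Duhamel) form, and track each contribution through the tree/branch apparatus built in Section~\ref{scc}. Substituting the ansatz \eqref{ses} into \eqref{cnls} and using Proposition~\ref{basi} (orthonormality of $\{e^{{\rm i}\langle n\rangle x}\}$), the equation becomes
\begin{align*}
{\rm i}\,\dot c(t,n) - \langle n\rangle^2 c(t,n) \pm \big(c\star\cdots\star c\big)(n) = 0,
\end{align*}
a $2p+1 = P$-fold alternating discrete convolution of higher dimension as in \eqref{adc}, where the complex conjugates that arise from $|u|^{2p}u = u^{p+1}\bar u^{p}$ are bookkept by the $\ast^{[\cdot]}$ notation of Section~\ref{lc}. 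Passing to the interaction representation $a(t,n) = e^{{\rm i}\langle n\rangle^2 t} c(t,n)$ kills the linear term and yields a pure integral equation $a(t,n) = c(n) \mp {\rm i}\int_0^t (\text{convolution of }a)(s,n)\,{\rm d}s$, to which Picard iteration applies with the initial guess $a^{[0]}(t,n) = c(n)$.

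First I would expand the $k$-th Picard iterate as a sum over the branch set $\Gamma^{(k)}$: each branch $\gamma^{(k)}$ contributes a term that is an iterated time integral (of oscillatory integrand $e^{{\rm i}\,\Theta\,s}$ coming from the mismatch of the quadratic phases) against a product of $2p\sigma(\gamma^{(k)})$ copies of initial data $c(m_j)^{\ast^{[\cdot]}}$, with the spatial indices constrained by $\sum_j (-1)^{j-1} m_j = n$, i.e.\ $\cc\ca\cs(n^{(k)}) = n$ by Proposition~\ref{lemas}, and with $2p\ell(\gamma^{(k)})$ time integrations by Remark~\ref{itu}. The crucial point here — and the reason there is \emph{no small divisor problem} (see Remark~1.1(b)) — is that time integration of a bounded oscillatory factor only costs a factor of $t$ (one can bound $|\int_0^s e^{{\rm i}\Theta\tau}\,{\rm d}\tau|$ by $s$ trivially, without ever dividing by $\Theta$), so the iterated integrals are bounded by $t^{2p\ell(\gamma^{(k)})}/(2p\ell(\gamma^{(k)}))!$ up to combinatorial factors. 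Then, using the polynomial decay hypothesis \eqref{pd} and the elementary convolution estimate $\sum_{m}(1+|m-n|)^{-\mathtt r}(1+|n|)^{-\mathtt r} \lesssim (1+|n|)^{-\mathtt r + \nu}$ (valid for $\mathtt r > \nu$) iterated along the tree, one bounds the branch contribution by $C^{\sigma}\,A^{\sigma}\,(1+|n|)^{-\mathtt r/2}$ once $\nu < \mathtt r/4$, with a constant $\mathfrak b(\mathtt r/2;\nu)$ absorbing the lost $\nu$ per convolution. Summing over $\Gamma^{(k)}$ (whose cardinality grows like a Catalan-type number, controlled by the $P^{-P}$ and $(2p)^{2p}$ factors in $t_0$) and over $k$ gives a geometric series in $A t_0 (\mathfrak b)^{2p} P^P (2p)^{-2p}$, which converges for $t \le t_0$. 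This simultaneously proves parts (1) (the iterates form a Cauchy sequence, so converge to a solution), (2) (the uniform decay estimate \eqref{sdecay}, hence classical regularity since $\mathtt r/2 > \nu$), and (4) (if $t_0 \ge T$, the same bound holds on $[0,T]$). For uniqueness (3): given two solutions in the stated class, their difference satisfies a linear integral inequality with a Gronwall-type structure — the difference at level $k$ is bounded by $(C A t)^k/k! \to 0$ — forcing the difference to vanish on $[0,t_0]$.

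For the asymptotic dynamics (5), I would rerun the expansion for \eqref{ecnls}: every nonlinear contribution now carries a power $\epsilon^{\sigma - 1/(2p)} = \epsilon^{\ell}$ (the number of convolutions, since each convolution introduces one factor of $\epsilon$), so $u^\epsilon(t,n) - u_{\text{linear}}(t,n)$ is the sum of all branches with $\ell \ge 1$, bounded by $\sum_{\ell \ge 1} (|\epsilon|\, t\, C A (\mathfrak b)^{2p})^{\ell} (1+|n|)^{-\mathtt r/2} \lesssim |\epsilon|\, t\, (1+|n|)^{-\mathtt r/2}$ for $|\epsilon| t$ small. Setting $t = |\epsilon|^{-1+\eta}$ makes this $O(|\epsilon|^{\eta}(1+|n|)^{-\mathtt r/2})$; summing absolutely over $n \in \bZ^\nu$ (which converges since $\mathtt r/2 > \nu$) gives the $L^\infty_x$ bound $\|u^\epsilon(t) - u_{\text{linear}}(t)\|_{L^\infty_x} \lesssim |\epsilon|^\eta \to 0$, and weighting by $\langle n\rangle^{2m}$ and summing in $\ell^2_n$ gives the $H^s_x$ bound for $s < \mathtt r/4 - \nu/2$ (one needs $\mathtt r/2 - 2s > \nu$ for the weighted $\ell^2$ sum to converge).

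The main obstacle I anticipate is the combinatorial bookkeeping: one must show that the number of branches in $\Gamma^{(k)}$ with a given value of $\sigma$, together with the symmetry factors from the $\int_0^t$ iterations and the conjugation labels $\ast^{[\cdot]}$, is dominated by the explicit constants appearing in $t_0$ — this is where Propositions~\ref{propsl} and \ref{lemms} (the $\sigma$–$\ell$ relation and the parity statement) and the dimension count Proposition~\ref{2ps} get used, and where the Feynman-diagram reformulation promised in the text is needed to make the estimate transparent. The analytic estimates (oscillatory integral bound, iterated convolution with polynomial weights) are routine; the genuine work is organizing the tree sum so that the series converges with the stated explicit radius, and verifying that the decay rate degrades by exactly the factor $\nu$ per convolution level in a way that the hypothesis $\nu < \mathtt r/4$ controls.
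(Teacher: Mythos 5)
Your overall strategy (Fourier ODE system, Duhamel/Picard iteration, tree expansion over the branch set, trivial bounds on the oscillatory factors so that no small divisors appear, Gronwall-type uniqueness, and the direct $|\epsilon|t$ estimate for the asymptotics) is the same as the paper's, and your treatment of parts (3) and (5) matches the actual proof. However, there is a genuine gap in the step that carries everything else, namely the uniform decay estimate \eqref{sdecay} for the iterates (part (2), which also underlies (1) and (4)). You propose to propagate decay by iterating the convolution bound $\sum_m(1+|m|)^{-\mathtt r}(1+|n-m|)^{-\mathtt r}\lesssim(1+|n|)^{-\mathtt r+\nu}$, ``losing $\nu$ per convolution level.'' This cannot work uniformly in the iteration depth: a branch $\gamma^{(k)}$ involves $2p\sigma(\gamma^{(k)})$ copies of the initial data, and $\sigma(\gamma^{(k)})$ is unbounded as $k\to\infty$, so a loss of $\nu$ per convolution destroys all polynomial decay after finitely many levels, no matter how $\nu<\mathtt r/4$ is chosen. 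The paper's mechanism is different: each weight $(1+|m_j|)^{-\mathtt r}$ is split into two halves $(1+|m_j|)^{-\mathtt r/2}\cdot(1+|m_j|)^{-\mathtt r/2}$; the product of one set of halves is bounded by $(1+|n|)^{-\mathtt r/2}$ using the geometric--arithmetic mean/generalized Bernoulli inequality (Lemmas \ref{lemmga}, \ref{lemmgb}) together with the constraint $\cc\ca\cs(n^{(k)})=n$ — a one-time halving of the exponent with \emph{no} per-level loss — while the other halves are summed absolutely, giving $\bigl(\mathfrak b(\mathtt r/2;\nu)\bigr)^{2p\sigma(\gamma^{(k)})}$ per branch (Lemma \ref{lemmriem}), which is then absorbed into $A^{\sigma}t^{\ell}$.

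Two further points where your sketch diverges from what is actually needed. First, the explicit constants $(2p)^{2p}P^{-P}$ in $t_0$ do not come from a Catalan-type count of $|\Gamma^{(k)}|$, nor from a factorial $t^{2p\ell}/(2p\ell)!$ bound on the iterated time integrals (the paper's Lemma \ref{li} gives $t^{\ell(\gamma^{(k)})}/\cD(\gamma^{(k)})$ with $\cD$ a tree factorial, which is weaker than your claim); they come from the weighted branch-sum induction of Lemma \ref{gal}, i.e.\ the inequality $1+(P/2p)^{P}\lozenge\le P/2p$ valid for $\lozenge\le(2p)^{2p}P^{-P}$, which yields the uniform bound $P/2p$ on $\sum_{\gamma^{(k)}}\lozenge^{\ell(\gamma^{(k)})}/\cD(\gamma^{(k)})$ for every $k$. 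Second, the uniform bound on each iterate does not by itself give convergence: the paper proves the Cauchy property separately (Lemmas \ref{nl} and \ref{sdf}) by a telescoping product-difference decomposition of $c_k-c_{k-1}$ in the integral recursion, paying a further halving of the decay rate (to $\mathtt r/4$) in order to extract the factor $t^k/k!$; your ``geometric series over $k$'' argument would at best work strictly inside $t<t_0$ and is not justified at the endpoint or at the level of detail given.
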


%

{We will divide the proof of Theorem \ref{cnlsth} into the following subsections. }
\subsection{Infinite-Dimensional ODEs and Picard Iteration
}
In this subsection we give an equivalent description for the quasi-periodic Cauchy problem \eqref{cnls}-\eqref{id0} in the Fourier space.

First we expand the nonlinearity $|u|^{2p}u$ in terms of $e^{{\rm i}\langle n\rangle x}$, that is,
%
\begin{align}\label{oc}
|u|^{2p}u
=\sum_{n\in\bZ^\nu}\sum_{\substack{n_j\in\bZ^\nu,~~j=1,\cdots,P\\\sum_{j=1}^{P}(-1)^{j-1}n_j=n}}
\prod_{j=1}^{P}\left\{c(t,n_j)\right\}^{\ast^{[j-1]}}e^{{\rm i}\langle n\rangle x}.
\end{align}
It should be emphasized again that this is the first time we use the power of $\ast^{\cdot}$ to label the complex conjugate; see \autoref{lc}. Note that above result appears in the form of $\ast^{[\cdot]}$ in order to get a unified result in a manner of combinatorics. Although it seems ``simple" here, it will play an essential role in the Picard iteration; see Lemma \autoref{lee}.

Since our method works for both the focusing and the defocusing NLS, we consider only the former case, that is, $\lambda=+1$. Formally (``$\partial\sum=\sum\partial$"), we know that the quasi-periodic Cauchy problem \eqref{cnls}-\eqref{id0} is equivalent to the following nonlinear system of infinite coupled ODEs
\begin{align}\label{ode}
(\partial_tc)(t,n)+{\rm i}\langle n\rangle c(t,n)={\rm i}\sum_{\substack{n_j\in\bZ^\nu,~~j=1,\cdots,P\\\sum_{j=1}^{P}(-1)^{j-1}n_j=n}}
\prod_{j=1}^{P}\left\{c(t,n_j)\right\}^{\ast^{[j-1]}},
\end{align}
which has the following integral form
\begin{align}\label{ie}
c(t,n)=e^{-{\rm i}\langle n\rangle^2t}c(n)+{\rm i}\int_0^te^{-{\rm i}\langle n\rangle^2(t-s)}\sum_{\substack{n_j\in\bZ^\nu,~~j=1,\cdots,P\\\sum_{j=1}^{P}(-1)^{j-1}n_j=n}}
\prod_{j=1}^{P}\left\{c(s,n_j)\right\}^{\ast^{[j-1]}}{\rm d}s.
\end{align}

According to the feedback of nonlinearity, define the Picard iteration $\{c_k(t,n)\}_{k\in\mathbb N}$ to uniformly approximate $c(t,n)$ as follows: first we choose the solution $$c_0(t,n)=e^{-{\rm i}\langle n\rangle^2t}c(n)
$$
to the linear equation of \eqref{cnls} as the initial guess and then define $c_k(t,n)$ successively by letting
\begin{align}\label{ck}
c_k(t,n)=c_0(t,n)+{\rm i}\int_0^te^{-{\rm i}\langle n\rangle^2(t-s)}\sum_{\substack{n_j\in\bZ^\nu,~~j=1,\cdots,P\\\sum_{j=1}^{P}(-1)^{j-1}n_j=n}}
\prod_{j=1}^{P}\left\{c_{k-1}(s,n_j)\right\}^{\ast^{[j-1]}}{\rm d}s,\quad \forall k\geq1.
\end{align}

\begin{rema}
This iteration is complicated. In fact, let $N_k$ be the term number of the initial Fourier data, where $k\geq1$. Then $N_1=2$ and $N_k=1+N_{k-1}^{P}$ for all $k\geq2$. Exponential growth and alternating complex conjugate force us to control $c_k$ well. To overcome these difficulties, we use an explicit combinatorial method with pcc and Feynman diagram to analyze the Picard iteration.
\end{rema}
\subsection{Combinatorial Tree}In this subsection we give {a} combinatorial tree form of $c_k(t,n)$ with the help of $\ast^{[\cdot]}$; see \autoref{lc}), that is, the following Lemma \ref{lee}.
\begin{lemm}\label{lee}
For all $k\geq1$ and $n\in\bZ^\nu$,  $c_k(t,n)$ has the following combinatorial tree form
\begin{align}\label{ct}
c_k(t,n)=\sum_{\gamma^{(k)}\in\Gamma^{(k)}}\sum_{\substack{n^{(k)}\in\cN^{(k,\gamma^{(k)})}\\\cc\ca\cs(n^{(k)})=n}}
\cC^{(k,\gamma^{(k)})}(n^{(k)})\cI^{(k,\gamma^{(k)})}(t,n^{(k)})\cF^{(k,\gamma^{(k)})}(n^{(k)}){,}
\end{align}
where $\cC,\cI$ and $\cF$ {(here we omit the index $(k,\gamma^{(k)})$ in these abstract symbols for simplicity, which will be used below somewhere without causing any confusions)} are iteratively defined as follows: For $k\geq1,\gamma^{(k)}=0, n^{(k)}\in\cN^{(k,0)}$, set
\begin{align*}
\cC^{(k,0)}(n^{(k)})&=c\left(\cc\ca\cs(n^{(k)})\right),\\
\cI^{(k,0)}(t,n^{(k)})&=e^{-{\rm i}\left\langle\cc\cc\ca\cs(n^{(k)})\right\rangle^2t},\\
\cF^{(k,0)}(n^{(k)})&=1;
\end{align*}
For $k=1,\gamma^{(1)}=1\in\Gamma^{(1)},n^{(1)}=(n_j)_{1\leq j\leq P}\in\cN^{(1,1)}$, set
\begin{align*}
\cC^{(1,1)}(n^{(1)})&=\prod_{j=1}^{P}\left\{c(n_j)\right\}^{\ast^{[j-1]}},\\
\cI^{(1,1)}(t,n^{(1)})&=\int_0^te^{-{\rm i}\left\langle\cc\ca\cs(n^{(1)})\right\rangle^2(t-s)}\prod_{j=1}^{P}\left\{e^{-{\rm i}\langle n_j\rangle^2s}\right\}^{\ast^{[j-1]}}{\rm d}s,\\
\cF^{(1,1)}(n^{(1)})&={\rm i};
\end{align*}
For $k\geq2, \gamma^{(k)}=(\gamma_j^{(k-1)})_{1\leq j\leq P}\in(\Gamma^{(k-1)})^{P},
n^{(k)}=(n_j^{(k-1)})_{1\leq j\leq P}$, set
\begin{align*}
\cC^{(k,\gamma^{(k)})}(n^{(k)})&=\prod_{j=1}^{P}\left\{\cC^{(k-1,\gamma_j^{(k-1)})}(n^{(k-1)}_j)\right\}^{\ast^{[j-1]}},\\
\cI^{(k,\gamma^{(k)})}(t,n^{(k)})&=\int_0^te^{-{\rm i}\left\langle\cc\ca\cs(n^{(k)})\right\rangle^2(t-s)}\prod_{j=1}^{P}\left\{\cI^{(k-1,\gamma_j^{(k-1)})}(s,n_j^{(k-1)})\right\}^{\ast^{[j-1]}}{\rm d}s,\\
\cF^{(k,\gamma^{(k)})}(n^{(1)})&={\rm i}\prod_{j=1}^{P}\left\{\cF^{(k-1,\gamma_j^{(k-1)})}(n_j^{(k-1)})\right\}^{\ast^{[j-1]}}.
\end{align*}
\end{lemm}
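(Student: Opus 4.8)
The statement to prove, Lemma~\ref{lee}, asserts that the Picard iterates $c_k(t,n)$ defined by \eqref{ck} admit the explicit combinatorial tree form \eqref{ct}, with the three families $\cC,\cI,\cF$ built recursively out of the branch set $\Gamma^{(k)}$, the combinatorial lattice $\cN^{(k,\gamma^{(k)})}$, and the alternating sum $\cc\ca\cs$. The natural line of attack is induction on $k$, mirroring exactly the recursive structure already set up in \autoref{scc} (Definitions of $\Gamma^{(k)}$, $\sigma$, $\ell$, $\cN$, $\cc\ca\cs$). The plan is to verify the base case $k=1$ by direct expansion of \eqref{ck} with $c_0(t,n)=e^{-{\rm i}\langle n\rangle^2 t}c(n)$, and then to push the formula from step $k-1$ to step $k$ by substituting the tree form of $c_{k-1}$ into the integral in \eqref{ck}, distributing the product and the $\ast^{[\cdot]}$-labels, and collecting terms according to which branch $\gamma^{(k)}\in\Gamma^{(k)}$ they belong to.

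Concretely, first I would treat $k=1$. The linear term $c_0(t,n)=e^{-{\rm i}\langle n\rangle^2 t}c(n)$ corresponds to $\gamma^{(1)}=0\in\Gamma^{(1)}$, $n^{(1)}\in\cN^{(1,0)}=\bZ^\nu$ with $\cc\ca\cs(n^{(1)})=n^{(1)}=n$, and matches the definitions $\cC^{(1,0)}=c(n)$, $\cI^{(1,0)}=e^{-{\rm i}\langle n\rangle^2 t}$, $\cF^{(1,0)}=1$. The nonlinear Duhamel term in \eqref{ck}, with $c_0$ plugged in, gives precisely $\gamma^{(1)}=1$: the inner sum over $n_j\in\bZ^\nu$ with $\sum_{j=1}^P(-1)^{j-1}n_j=n$ is the sum over $n^{(1)}=(n_j)_{1\le j\le P}\in\cN^{(1,1)}=(\bZ^\nu)^P$ with $\cc\ca\cs(n^{(1)})=n$, the product $\prod_{j=1}^P\{c(n_j)\}^{\ast^{[j-1]}}$ is $\cC^{(1,1)}$, the $s$-integral of $\prod_j\{e^{-{\rm i}\langle n_j\rangle^2 s}\}^{\ast^{[j-1]}}$ against $e^{-{\rm i}\langle n\rangle^2(t-s)}$ is $\cI^{(1,1)}$, and the leading ${\rm i}$ is $\cF^{(1,1)}$. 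Summing the two contributions is exactly \eqref{ct} for $k=1$.

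For the inductive step, assume \eqref{ct} for $k-1$. I would substitute this expression (with $s$ in place of $t$, and summation variable $n_j^{(k-1)}$) for each factor $c_{k-1}(s,n_j)$ inside \eqref{ck}. The point is that the $\ast^{[j-1]}$ applied to a sum is, by \eqref{lin}, the sum of the $\ast^{[j-1]}$'s of the summands, and $\ast^{[j-1]}$ applied to a product is the product of $\ast^{[j-1]}$'s; moreover by \eqref{co3} an inner label $\ast^{[i-1]}$ composed with an outer label $\ast^{[j-1]}$ collapses to $\ast^{[i+j-2]}$, consistent with how $\cC$, $\cI$, $\cF$ nest. Carrying the $P$-fold product through and interchanging the (finite, at each stage) sums, the $P$-tuple of branches $(\gamma_j^{(k-1)})_{1\le j\le P}$ ranges over $(\Gamma^{(k-1)})^P$, which together with the $\gamma^{(k)}=0$ term (the linear $c_0$) is exactly $\Gamma^{(k)}$ by \eqref{g}; the tuple $n^{(k)}=(n_j^{(k-1)})_{1\le j\le P}$ ranges over $\prod_j \cN^{(k-1,\gamma_j^{(k-1)})}=\cN^{(k,\gamma^{(k)})}$; and the constraint $\sum_{j=1}^P(-1)^{j-1}n_j=n$, once each $n_j$ is replaced by $\cc\ca\cs(n_j^{(k-1)})$ (which is the "total distance" carried by the $j$-th subtree), becomes $\sum_{j=1}^P(-1)^{j-1}\cc\ca\cs(n_j^{(k-1)})=\cc\ca\cs(n^{(k)})=n$ by Definition~\ref{defias}. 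The $s$-integral $\int_0^t e^{-{\rm i}\langle n\rangle^2(t-s)}\prod_{j=1}^P\{\cI^{(k-1,\gamma_j^{(k-1)})}(s,n_j^{(k-1)})\}^{\ast^{[j-1]}}{\rm d}s$ and the extra factor ${\rm i}$ match $\cI^{(k,\gamma^{(k)})}$ and the recursion for $\cF^{(k,\gamma^{(k)})}$, while the coefficient product matches $\cC^{(k,\gamma^{(k)})}$. This yields \eqref{ct} for $k$ and closes the induction.

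The only genuinely delicate bookkeeping — and the step I expect to be the main obstacle — is keeping the complex-conjugate labels $\ast^{[\cdot]}$ consistent when the recursion is unwound: one must check that the label seen by a datum $c(m)$ sitting deep in the tree is the sum, mod $2$, of the indices $j-1$ along the path from the root, and that this is precisely what the nested $\ast^{[j-1]}$'s in the definitions of $\cC,\cI,\cF$ produce, via repeated application of \eqref{co3}. Relatedly, one should note the (harmless but notation-heavy) relabeling of the components $m_1,\dots,m_{2p\sigma(\gamma^{(k)})}$ of $n^{(k)}$ as in Proposition~\ref{lemas}, so that the alternating sign $(-1)^{j-1}$ attached to a leaf in the $j'$-th subtree is the product of the local sign inside that subtree and the sign $(-1)^{j'-1}$ of the subtree itself — exactly the computation already carried out in the proof of Proposition~\ref{lemas}. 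Everything else (interchanging finitely many sums, pulling constants through integrals, $\partial\sum=\sum\partial$ at the formal level) is routine; the forthcoming Feynman-diagram description is presumably introduced precisely to make the label-tracking transparent.
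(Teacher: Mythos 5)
Your proposal is correct and follows essentially the same route as the paper: induction on $k$, with the base case $k=1$ obtained by directly expanding the linear ($\gamma^{(1)}=0$) and Duhamel ($\gamma^{(1)}=1$) parts of \eqref{ck}, and the inductive step obtained by substituting the tree form of $c_{k-1}$ into \eqref{ck}, distributing the $\ast^{[j-1]}$-labels via \eqref{lin}, interchanging the sums, and matching the result with the recursive definitions of $\Gamma^{(k)}$, $\cN^{(k,\gamma^{(k)})}$, $\cc\ca\cs$, $\cC$, $\cI$, $\cF$. The only remark is that the deep ``collapse'' of nested labels via \eqref{co3} that you flag as the delicate point is not actually needed for this lemma (the definitions keep the labels nested); it is deferred to the later explicit-product statement, Lemma~\ref{ca}.
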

\begin{proof}
For the initial guess/the linear part of \eqref{ck}, we have
\begin{align*}
c_0(t,n)
&=\sum_{\substack{n^{(k)}\in\cN^{(k,0)}\\\cc\ca\cs(n^{(k)})=n}}
\cC^{(k,0)}(n^{(k)})\cI^{(k,0)}(t,n^{(k)})\cF^{(k,0)}(n^{(k)})\mapsto\gamma^{(k)}=0\in\Gamma^{(k)},\quad\forall k\geq1.
\end{align*}
{Here we use $``\mapsto"$ to show that every term in the Picard sequence is labeled by a unique element in the branch set, which will be used below as well.}

For $k=1$ and the nonlinear part of \eqref{ck}, one can derive that
\begin{align*}
c_1(t,n)-c_0(t,n)
&=\sum_{\substack{n_j\in\bZ^\nu,~~j=1,\cdots,P\\\sum_{j=1}^{P}(-1)^{j-1}n_j=n}}
\prod_{j=1}^{P}\left\{c(n_j)\right\}^{\ast^{[j-1]}}\cdot\int_0^te^{-{\rm i}\langle n\rangle^2(t-s)}\prod_{j=1}^{P}\left\{e^{-{\rm i}\langle n_j\rangle^2s}\right\}^{\ast^{[j-1]}}{\rm d}s\cdot{\rm i}\\
&=\sum_{\substack{n^{(1)}\in\cN^{(1,1)}\\\cc\ca\cs(n^{(1)})=n}}\cC^{(1,1)}(n^{(1)})\cI^{(1,1)}
(t,n^{(1)})\cF^{(1,1)}(n^{(1)})\mapsto\gamma^{(1)}=1\in\Gamma^{(1)}.
\end{align*}
Thus
\begin{align*}
c_1(t,n)
&=\sum_{\gamma^{(1)}\in\Gamma^{(1)}}\sum_{\substack{n^{(1)}\in\cN^{(1,\gamma^{(1)})}
\\\cc\ca\cs(n^{(1)})=n}}\cC^{(1,\gamma^{(1)})}(n^{(1)})\cI^{(1,\gamma^{(1)})}
(t,n^{(1)})\cF^{(1,\gamma^{(1)})}(n^{(1)}).
\end{align*}
This implies that \eqref{ct} is true for $k=1$.

Let $k\geq2$. Assume that \eqref{ct} holds for all $1<k^\prime<k$.

For $k$, it follows from the definition of $c_k$, i.e., \eqref{ck}, and induction that the nonlinear part of \eqref{ck} reads
\begin{align*}
&c_k(t,n)-c_0(t,n)\\
=&{\rm i}\int_0^te^{-{\rm i}\langle n\rangle^2(t-s)}\sum_{\substack{n_j\in\bZ^\nu,~~j=1,\cdots,P\\\sum_{j=1}^{P}(-1)^{j-1}n_j=n}}
\prod_{j=1}^{P}\\
&\Bigg\{\sum_{\gamma_j^{(k-1)}\in\Gamma^{(k-1)}}\sum_{\substack{n_j^{(k-1)}\in\cN^{(k-1,\gamma_j^{(k-1)})}
\\\cc\ca\cs(n_j^{(k-1)})=n_j}}\cC^{(k-1,\gamma_j^{(k-1)})}(n_j^{(k-1)})\cI^{(k-1,\gamma_j^{(k-1)})}
(s,n_j^{(k-1)})\\
&\cF^{(k-1,\gamma_j^{(k-1)})}(n_j^{(k-1)})\Bigg\}^{\ast^{[j-1]}}{\rm d}s\\
=&{{\rm i}\int_0^te^{-{\rm i}\langle n\rangle^2(t-s)}\sum_{\substack{n_j\in\bZ^\nu,~~j=1,\cdots,P\\\sum_{j=1}^{P}(-1)^{j-1}n_j=n}}
\prod_{j=1}^{P}\sum_{\gamma_j^{(k-1)}\in\Gamma^{(k-1)}}\sum_{\substack{n_j^{(k-1)}\in\cN^{(k-1,\gamma_j^{(k-1)})}
\\\cc\ca\cs(n_j^{(k-1)})=n_j}}}\\
&{\left\{\cC^{(k-1,\gamma_j^{(k-1)})}(n_j^{(k-1)})\right\}^{\ast^{[j-1]}}\left\{\cI^{(k-1,\gamma_j^{(k-1)})}
(s,n_j^{(k-1)})\right\}^{\ast^{[j-1]}}\left\{\cF^{(k-1,\gamma_j^{(k-1)})}(n_j^{(k-1)})\right\}^{\ast^{[j-1]}}{\rm d}s}\\
=&\sum_{\substack{\gamma_j^{(k-1)}\in\Gamma^{(k-1)}\\j=1,\cdots,P}}
\sum_{\substack{n_j\in\bZ^\nu,~~j=1,\cdots,P\\\sum_{j=1}^{P}(-1)^{j-1}n_j=n}}\sum_{\substack{n_j^{(k-1)}\in\cN^{(k-1,\gamma_j^{(k-1)})}
\\\cc\ca\cs(n_j^{(k-1)})=n_j}}\prod_{j=1}^{P}\left\{\cC^{(k-1,\gamma_j^{(k-1)})}(n_j^{(k-1)})\right\}^{\ast^{[j-1]}}
\\
&\cdot\int_0^te^{-{\rm i}\langle n\rangle^2(t-s)}\prod_{j=1}^{P}\left\{\cI^{(k-1,\gamma_j^{(k-1)})}(s,n_j^{(k-1)})\right\}^{\ast^{[j-1]}}
{\rm d}s\cdot{\rm i}\prod_{j=1}^{P}\left\{\cF^{(k-1,\gamma_j^{(k-1)})}(n_j^{(k-1)})\right\}^{\ast^{[j-1]}}\\
=&\sum_{\substack{\gamma^{(k)}=(\gamma_j^{(k-1)})_{1\leq j\leq P}\\\gamma_{j}^{(k-1)}\in\Gamma^{(k-1)}\\j=1,\cdots,P}}
\sum_{\substack{n^{(k)}=(n_j^{(k-1)})_{1\leq j\leq P}\\
n_j^{(k-1)}\in\cN^{(k-1,\gamma_j^{(k-1)})}
\\\cc\ca\cs(n_j^{(k-1)})=n_j\\j=1,\cdots,P}}\prod_{j=1}^{P}\left\{\cC^{(k-1,\gamma_j^{(k-1)})}(n_j^{(k-1)})\right\}^{\ast^{[j-1]}}
\\
&\cdot\int_0^te^{-{\rm i}\langle n\rangle^2(t-s)}\prod_{j=1}^{P}\left\{\cI^{(k-1,\gamma_j^{(k-1)})}(s,n_j^{(k-1)})\right\}^{\ast^{[j-1]}}
{\rm d}s\cdot{\rm i}\prod_{j=1}^{P}\left\{\cF^{(k-1,\gamma_j^{(k-1)})}(n_j^{(k-1)})\right\}^{\ast^{[j-1]}}\\
=&\sum_{\gamma^{(k)}\in(\Gamma^{(k-1)})^{P}}\sum_{\substack{n^{(k)}\in\cN^{(k,\gamma^{(k)})}\\
\cc\ca\cs(n^{(k)})=n}}\cC^{(k,\gamma^{(k)})}(n^{(k)})\cI^{(k,\gamma^{(k)})}(t,n^{(k)})\cF^{(k,\gamma^{(k)})}(n^{(k)})
\mapsto\gamma^{(k)}\in(\Gamma^{(k-1)})^{P}.
\end{align*}
As a result, we have
\begin{align*}
c_k(t,n)
&=\sum_{\gamma^{(k)}\in\Gamma^{(k)}}\sum_{\substack{n^{(k)}\in\cN^{(k,\gamma^{(k)})}
\\\cc\ca\cs(n^{(k)})=n}}\cC^{(k,\gamma^{(k)})}(n^{(k)})\cI^{(k,\gamma^{(k)})}
(t,n^{(k)})\cF^{(k,\gamma^{(k)})}(n^{(k)}).
\end{align*}
Hence \eqref{ct} is true for $k$.

It follows from induction that \eqref{ct} holds for all $k\in\bN$. This completes the proof of Lemma \ref{lee}.
\end{proof}

\subsection{Feynman Diagram}\label{di}


In this subsection, we take $p=1$ as a typical case to introduce the Feynman diagram to illustrate the combinatorial tree \eqref{ct}.


What \eqref{ct} means is that every term $c_k$ can be imagined as a tree. Such a tree is presented by a Feynman diagram which is named after $\Gamma^{(k)}$-family, with the help of the branch set $\Gamma^{(k)}$, for each $k\geq1$.

For any given $\Gamma^{(k)}$, a branch of this tree is circled by a rectangular box and it is named after an element $\gamma^{(k)}\in\Gamma^{(k)}$.

Let black/red/green lattice points $\bullet/{\color{red}\bullet}/{\color{green}\bullet}$ be the elements in the combinatorial lattice $\cN^{(k,\gamma^{(k)})}$. They are equipped with positive and negative signs, or rather, black/red lattice points $\bullet/{\color{red}\bullet}$ have positive sign $+$, and green lattice points ${\color{green}\bullet}$ have negative sign $-$. What's more, from the perspective of a multi-linear operator, black/red lattice points $\bullet/{\color{red}\bullet}$ as independent variables correspons to the initial data $c$, and green lattice points ${\color{green}\bullet}$ as independent variables correspond to the complex conjugate of the initial data $c$, i.e., $\overline{c}$.

For $\Gamma^{(k)}$, there are $k+1$ horizontal lines which are named after level $j$, where $j=0,1,\cdots,k$, from up to down.
On level $0$, there is exactly a black point $\bullet$ labelled by $n$. On level $k$, there are exactly $2\sigma(\gamma^{(k)})$  points $m_j$ with $j=1,\cdots,2\sigma(\gamma^{(k)})$, and they are ordered from left to right. If $\gamma^{(k)}=0$, $m_1$ is presented as a black point $\bullet$; if $\gamma^{(k)}\in\Gamma^{(k)}\backslash\{0\}$, $m_j$'s are presented as magenta points ${\color{red}\bullet}$ if $j$ is odd and green points ${\color{green}\bullet}$  if $j$ is even.

We also put $n^{(k)}$ on the left of level $k$.

With such a tree at hand, we can easily write the combinatorial alternating sum as follows: $$\cc\ca\cs(n^{(k)})=\sum_{j=1}^{2\sigma(\gamma^{(k)})}(-1)^{j-1}m_j.$$

Hence \eqref{ct} can be viewed as a sum over all branches of $\Gamma^{(k)}$-family, and each branch splits over the condition $\cc\ca\cs(n^{(k)})=n$.

The diagrams of $\Gamma^{(1)}$-family and $\Gamma^{(2)}$-family are given below; see \autoref{g1} and \autoref{g2} respectively.

\begin{figure}[htbp]
  \centering
  \includegraphics[width=0.6\textwidth]{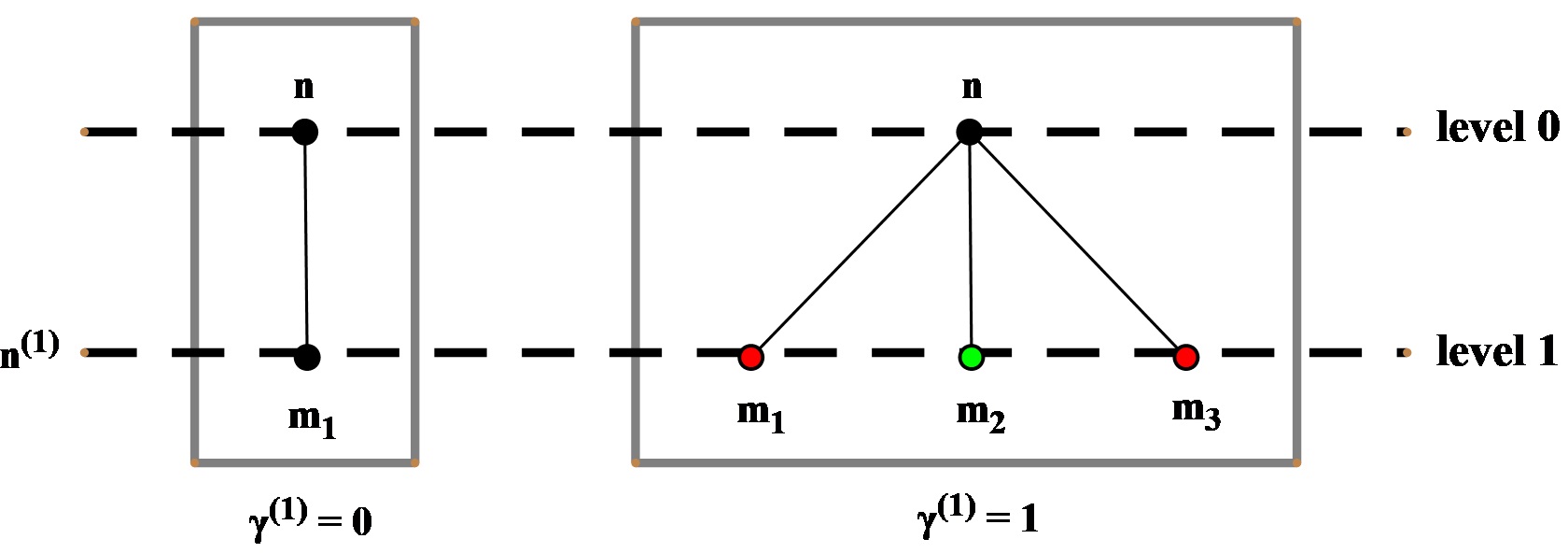}
  \caption{$\Gamma^{(1)}$-family.}
  \label{g1}
\end{figure}
(Description of \autoref{g1})~---~This diagram stands for the first iteration, that is, $c_1$ can be viewed as a sum of these two rectangular in the following sense: the left one depicts the linear part and the right one depicts the nonlinear part.

---~More precisely, in each rectangle, there are two horizontal dotted lines, named after level $0$ and level $1$, from up to down.

---~In level $0$, there is exactly one black point $\bullet$, named after the initial lattice point $n$.

---~For the left rectangle, the initial lattice point has only one child, put in the next level $1$, labeled by a black point $\bullet$ and named after $m_1$. Hence $$\cc\ca\cs(n^{(1)})=m_1 \quad\text{and}\quad \cC^{(1,0)}(n^{(1)})=c(m_1).$$

---~For the right rectangle, the initial lattice point has three children, put in the next level $1$, labeled by ${\color{red}\bullet}$, ${\color{green}\bullet}$ and ${\color{red}\bullet}$, named after $m_1, m_2$ and $m_3$ respectively. In this case, {red} and {green} colors $({\color{red}\bullet};{\color{green}\bullet})$ respectively stand for $({\color{red}+};{\color{green}-})$ and $({\color{red}z};{\color{green}\bar{z}})$ for any complex number $z\in\mathbb C$. Thus $$\cc\ca\cs(n^{(1)})=m_1-m_2+m_3 \quad\text{and}\quad \cC^{(1,1)}(n^{1})=c(m_1)\overline{c(m_2)}c(m_3).$$
\begin{figure}[htbp]
  \centering
  \includegraphics[width=0.86\textwidth]{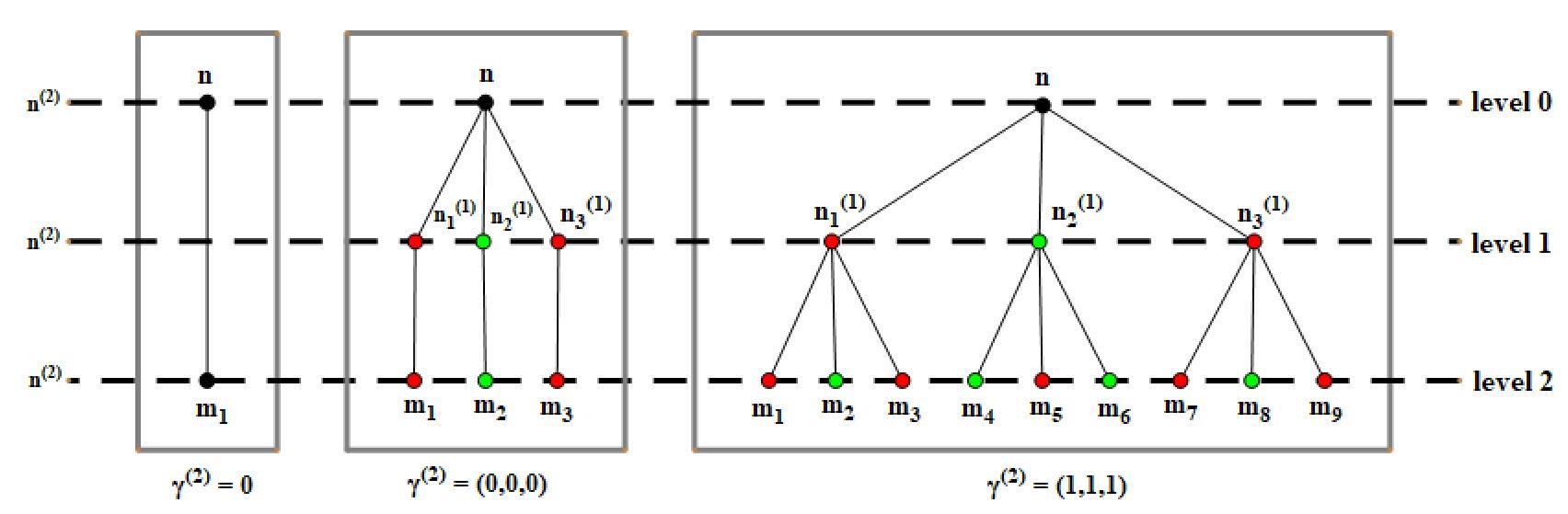}
  \includegraphics[width=0.9\textwidth]{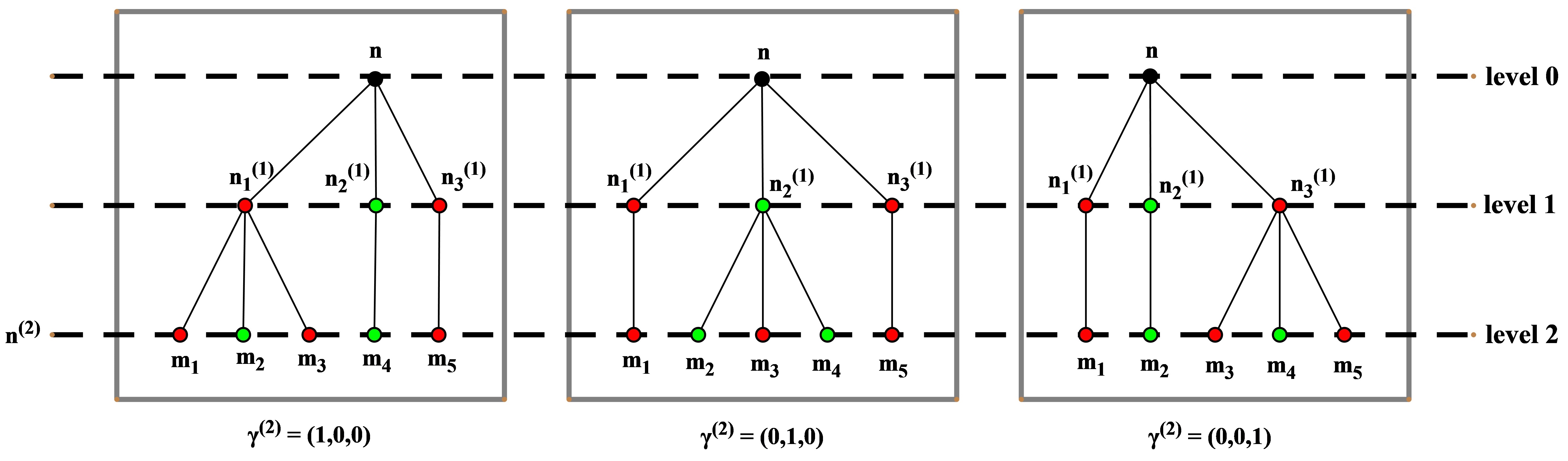}
  \includegraphics[width=0.9\textwidth]{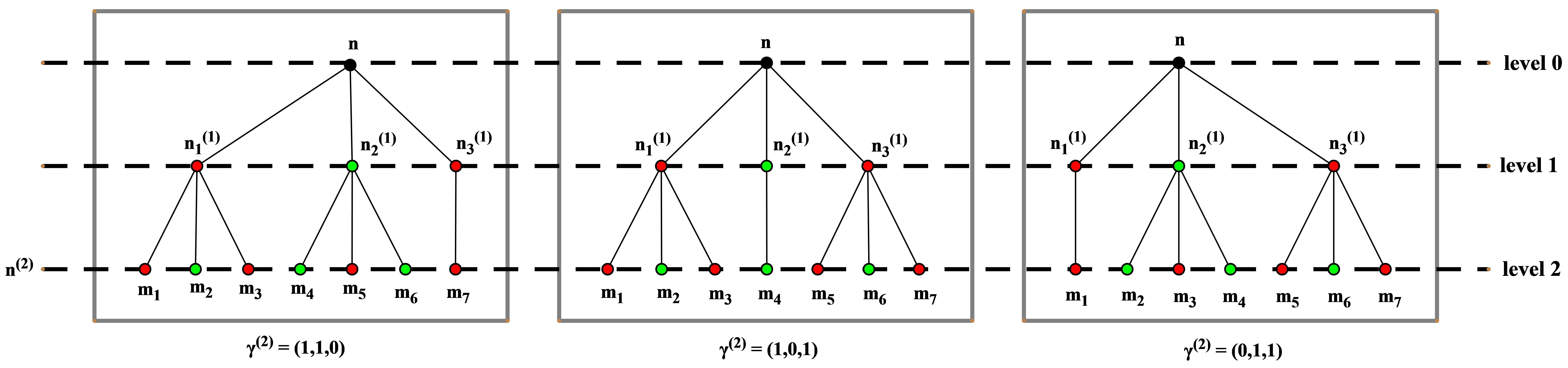}
  \caption{$\Gamma^{(2)}$-family.}\label{g2}
\end{figure}

(Description of \autoref{g2})~---~These nine rectangles depict the second iteration, that is, $c_2$ can be viewed as the summation of all these rectangles in the following sense: the first one depicts the linear part and the rest depict the nonlinear part.

---~In each rectangle there are three horizontal dotted lines, named after level $0,1,2$, from up to down.

---~In level $0$, there is exactly one black point $\bullet$, named after the initial lattice point $n$.

---~The first rectangle is the same as previous one in $c_1$ (notice that there is no point in the next level $1$, and a black point $\bullet$, put in the final level $2$, named after $m_1$).

---~For the remaining rectangles, the initial lattice points have exactly three children, put in the next level $1$, label by ${\color{red}\bullet}$, ${\color{green}\bullet}$ and ${\color{red}\bullet}$, named after $n_1^{(1)}, n_2^{(1)}$ and $n_3^{(1)}$ respectively (they consist of $n^{(2)}$).

---~In the second rectangle, each of the lattice point in level $1$ has exactly one child, put in the next level $2$, labeled by ${\color{red}\bullet}$, ${\color{green}\bullet}$ and ${\color{red}\bullet}$, named after $m_1, m_2$ and $m_3$ respectively. Thus $$\cc\ca\cs(n^{(2)})=m_1-m_2+m_3\quad\text{and}\quad \cC^{(2,(0,0,0))}(n^{(2)})=c(m_1)\overline{c(m_2)}c(m_3).$$ This corresponds to the nonlinearity $u\bar u u$ when every $u$ takes the linear part.

---~In the third rectangle, each of the lattice point in level $1$ has three children, put in the next level $2$, labeled by ${\color{red}\bullet}$, ${\color{green}\bullet}$ and ${\color{red}\bullet}$ or ${\color{green}\bullet}$, ${\color{red}\bullet}$ and ${\color{green}\bullet}$ (it is determined by the color of the first point, which is the same as its generator), named after $m_j$ with $j$ from $1$ to $9$ respectively (from left to right). In this case, $$\cc\ca\cs(n^{(2)})=m_1-m_2+m_3-m_4+m_5-m_6+m_7-m_8+m_9$$
and $$\cC^{(2,(1,1,1))}(n^{(2)})=c(m_1)\overline{c(m_2)}c(m_3)
\overline{c(m_4)}c(m_5)\overline{c(m_6)}c(m_7)
\overline{c(m_8)}c(m_9).$$ This corresponds to the nonlinearity $u\bar u u$ when every $u$ takes the nonlinear part.

---~In the second line, each of these diagrams, there is exactly one point in level $1$ that has three children and the rest have only one child. This corresponds to the the nonlinearity $u\bar u u$ when there is exactly one $u$ taking the nonlinear part and the rest taking the linear part.

---~In the third line, there are exactly two points in level $1$ respectively having three children and the rest has only one child. This corresponds to the nonlinearity $u\bar u u$ when there are exactly two $u$'s taking the nonlinear part and the rest taking the linear part.



\subsection{The Picard Sequence is Polynomially Decaying With a Slightly Worse Decay Rate}

In this subsection we will prove that the Picard sequence is $\mathtt r/2$-polynomially decaying.


In order to complete this {estimate}, we first estimate the size of $\cC,\cI$ and $\cF$ independently; see {Lemmas} \ref{cl}, \ref{li} and \ref{fl} respectively.  Then putting these estimates together, we will be able {to estimate} the Picard sequence; see Lemma \ref{pl}.

We first estimate $\cC$. {To} this end, we need to {describe} $\cC^{(k,\gamma^{(k)})}(n^{(k)})$  {in terms of} the components $(n^{(k)})_j$ of $n^{(k)}$. Thanks to the label of complex conjugate, see \autoref{lc}, $\cC$ has a {\bf good {combinatorial}} structure w.r.t the initial Fourier data $c$, that is, the following Lemma \ref{ca}. Note that the result is different from \cite[Lemma 4.1]{DLX22AR}.
\begin{lemm}\label{ca}
For all $k\geq1$ and $n^{(k)}=(m_j)_{1\leq j\leq 2p\sigma(\gamma^{(k)})}$, we have
\begin{align}\label{cc}
\cC^{(k,\gamma^{(k)})}(n^{(k)})=\prod_{j=1}^{2p\sigma(\gamma^{(k)})}\left\{c(m_j)\right\}^{^{\ast^{[j-1]}}}.
\end{align}
\end{lemm}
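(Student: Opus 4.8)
The plan is to prove \eqref{cc} by induction on $k$, exactly mirroring the recursive structure of the three families of definitions $\cC$, $\cN$ and $\cc\ca\cs$, and leaning on the parity result of Proposition~\ref{lemms} together with the ``linearity'' and ``multiplicativity'' properties \eqref{co2}, \eqref{co3}, \eqref{lin} of the power-of-$\ast$ operation. The base cases $k=1$ (and the sub-case $\gamma^{(k)}=0$ for all $k$) are immediate from the definitions: when $\gamma^{(k)}=0$ we have $2p\sigma(0)=1$, $n^{(k)}=m_1$, and $\cC^{(k,0)}(n^{(k)})=c(\cc\ca\cs(n^{(k)}))=c(m_1)=\{c(m_1)\}^{\ast^{[0]}}$; when $k=1$, $\gamma^{(1)}=1$, we have $2p\sigma(1)=P$ and $\cC^{(1,1)}(n^{(1)})=\prod_{j=1}^{P}\{c(n_j)\}^{\ast^{[j-1]}}$, which is literally \eqref{cc}.

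For the inductive step, I would fix $k\geq 2$, assume \eqref{cc} holds for all $1\le k'<k$, and take $\gamma^{(k)}=(\gamma_j^{(k-1)})_{1\le j\le P}$ with $n^{(k)}=(n_j^{(k-1)})_{1\le j\le P}$. Unwinding the defining recursion for $\cC$ gives
\begin{align*}
\cC^{(k,\gamma^{(k)})}(n^{(k)})=\prod_{j=1}^{P}\left\{\cC^{(k-1,\gamma_j^{(k-1)})}(n_j^{(k-1)})\right\}^{\ast^{[j-1]}},
\end{align*}
and by the induction hypothesis each inner factor equals $\prod_{i=1}^{2p\sigma(\gamma_j^{(k-1)})}\{c((n_j^{(k-1)})_i)\}^{\ast^{[i-1]}}$. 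Applying \eqref{lin} (multiplicativity of $\ast^{[\cdot]}$ over products) and \eqref{co3} (additivity of exponents) pushes the outer $\ast^{[j-1]}$ inside, turning the $i$-th factor on the $j$-th branch into $\{c((n_j^{(k-1)})_i)\}^{\ast^{[(i-1)+(j-1)]}}$. Using the block re-indexing $m_{\,2p\sum_{j_0<j}\sigma(\gamma_{j_0}^{(k-1)})+i}=(n_j^{(k-1)})_i$ (the same one used in the proof of Proposition~\ref{lemas}), the global position of this factor is $J=2p\sum_{j_0<j}\sigma(\gamma_{j_0}^{(k-1)})+i$, so I must check that $(i-1)+(j-1)\equiv J-1\pmod 2$. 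This reduces to $j-1\equiv 2p\sum_{j_0<j}\sigma(\gamma_{j_0}^{(k-1)})\pmod 2$, and since each $2p\sigma(\gamma_{j_0}^{(k-1)})$ is odd by Proposition~\ref{lemms}, the right-hand side is $\equiv j-1\pmod 2$ — exactly what is needed. Collecting the factors over all $j$ and all $i$ then yields $\prod_{J=1}^{2p\sigma(\gamma^{(k)})}\{c(m_J)\}^{\ast^{[J-1]}}$, which is \eqref{cc} for $k$; induction closes the argument.

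The only genuinely delicate point — and the step I expect to be the main obstacle to write cleanly — is the parity bookkeeping in the re-indexing: one must be careful that the exponent $\ast^{[\cdot]}$ attached to the factor $c(m_J)$ after flattening the nested product really is $[J-1]$ and not off by one on some branches. This is precisely where Proposition~\ref{lemms} (oddness of $2p\sigma(\gamma^{(k-1)}_{j_0})$) is indispensable: it guarantees that each completed sub-branch contributes an odd number of factors, so that the alternating $+-+\cdots$ pattern lines up correctly when sub-branches are concatenated, and the conjugation pattern inherited through \eqref{co3} matches the global left-to-right alternation. A Feynman-diagram description (as flagged in the remark following Proposition~\ref{lemas}) can be appended to make this parity matching visually transparent, but the inductive computation above is self-contained.
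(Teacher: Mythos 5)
Your proposal is correct and follows essentially the same route as the paper: induction on $k$, unwinding the recursive definition of $\cC$, pushing the outer $\ast^{[j-1]}$ inside via the operation properties of the pcc, block re-indexing $m_{2p\sum_{j_0<j}\sigma(\gamma_{j_0}^{(k-1)})+i}=(n_j^{(k-1)})_i$, and using the oddness of $2p\sigma$ from Proposition~\ref{lemms} to match the conjugation label with the global alternation. Your explicit parity check $(i-1)+(j-1)\equiv J-1\pmod 2$ is exactly the bookkeeping the paper performs when it rewrites the exponent as $\ast^{[j-2p\sum_{j_0}\sigma(\gamma_{j_0}^{(k-1)})-1]}$ composed with $\ast^{[j'-1]}$.
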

\begin{proof}
For $k=1,\gamma^{(1)}=0,n^{(1)}=m_1\in\cN^{(1,0)}, 2p\sigma(0)=1$ and $k=1,\gamma^{(1)}=1,n^{(1)}=(m_j)_{1\leq j\leq P}\in\cN^{(1,1)}, 2p\sigma(1)=P$, it follows from the definition of $\cC$ that \eqref{cc} is true for $k=1$.

Let $k\geq2$. Assume that \eqref{cc} holds for all $1<k^\prime<k$.

For $k$, it's obvious that \eqref{cc} holds for $\gamma^{(k)}=0\in\Gamma^{(k)}$. For $\gamma^{(k)}=(\gamma_j^{(k-1)})_{1\leq j\leq P}\in(\Gamma^{(k-1)})^{P}$ and $n^{(k)}=(n_j^{(k-1)})_{1\leq j\leq p^{\ast}}\in\prod_{j=1}^{P}\cN^{(k-1,\gamma_j^{(k-1)})}$, where $$n_1^{(k-1)}=(m_{j})_{1\leq j\leq2p\sigma(\gamma_1^{(k-1)})}$$ and $$ n_{j^\prime}^{(k-1)}=(m_{j})_{2p\sum_{j=1}^{j^\prime-1}\sigma(\gamma_j^{(k-1)})+1\leq j\leq2p\sum_{j=1}^{j^\prime}\sigma(\gamma_j^{(k-1)})},\quad j^\prime=2,\cdots,P.$$ By the definition of $\cC$ and induction hypothesis, we have
\begin{align*}
&\cC^{(k,\gamma^{(k)})}(n^{(k)})\\
=~&
\prod_{j^\prime=1}^{P}
\left\{\cC^{(k-1,\gamma_{j^\prime}^{(k-1)})}(n_{j^\prime}^{(k-1)})\right\}^{\ast^{[j^\prime-1]}}\\
=~&\cC^{(k-1,\gamma_{1}^{(k-1)})}(n_{1}^{(k-1)})
\prod_{j^\prime=2}^{P}\left\{\cC^{(k-1,\gamma_{j^\prime}^{(k-1)})}(n_{j^\prime}^{(k-1)})\right\}^{\ast^{[j^\prime-1]}}\\
=~&\prod_{j=1}^{2p\sigma(\gamma_1^{(k-1)})}\left\{c(m_j)\right\}^{\ast^{[j-1]}}\prod_{j^\prime=2}^{P}
\left\{\prod_{j=1}^{2p\sigma(\gamma_{j^\prime}^{(k-1)})}
\left\{c(m_{2p\sum_{j_0=1}^{j^\prime-1}\sigma(\gamma_{j_0}^{(k-1)})+j})\right\}^{\ast^{[j-1]}}\right\}^{\ast
^{[j^\prime-1]}}\\
=~&\prod_{j=1}^{2p\sigma(\gamma_1^{(k-1)})}\left\{c(m_j)\right\}^{\ast^{[j-1]}}\prod_{j^\prime=2}^{P}
\left\{\prod_{j=2p\sum_{j_0=1}^{j^\prime-1}\sigma(\gamma_{j_0}^{(k-1)})+1}^{2p\sum_{j_0=1}^{j^\prime}
\sigma(\gamma_{j_0}^{(k-1)})}\{c(m_{j})\}^{\ast^{[j-2p\sum_{j_0=1}^{j^\prime-1}\sigma(\gamma_{j_0}^{(k-1)})-1]}}
\right\}^{\ast^{[j^\prime-1]}}\\
=~&\prod_{j=1}^{2p\sigma(\gamma_1^{(k-1)})}\left\{c(m_j)\right\}^{\ast^{[j-1]}}\prod_{j^\prime=2}^{P}
\prod_{j=2p\sum_{j_0=1}^{j^\prime-1}\sigma(\gamma_{j_0}^{(k-1)})+1}^{2p\sum_{j=1}^{j^\prime}
\sigma(\gamma_{j_0}^{(k-1)})}\left\{c(m_{j})\right\}^{\ast^{[j-1]}}\\
=~&\prod_{j=1}^{2p\sigma(\gamma^{(k)})}\left\{c(m_j)\right\}^{\ast^{[j-1]}}.
\end{align*}
Here we use the property of operation $\ast^{[\cdot]}$ (see \autoref{lc}) and Lemma \ref{lemms}. This shows that \eqref{cc} is true for $k$.

It follows from induction that \eqref{cc} holds for all $k\geq1$. This completes the proof of Lemma \ref{ca}.
\end{proof}

Thanks to Lemma \ref{ca}, we can directly estimate the size of $\cC$; see the following Lemma~\ref{cl}.
\begin{lemm}\label{cl}
If the initial Fourier data $c$ is $\mathtt r$-polynomially decaying, i.e., it satisfies \eqref{pd}, then
\begin{align}\label{cd}
|\cC^{(k,\gamma^{(k)})}(n^{(k)})|\leq A^{\sigma(\gamma^{(k)})}\prod_{j=1}^{2p\sigma(\gamma^{(k)})}\left(1+|(n^{(k)})_j|
\right)^{-\mathtt r},\quad \forall k\geq1.
\end{align}
\end{lemm}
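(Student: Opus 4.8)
The plan is to prove \eqref{cd} by induction on $k$, using the closed product formula \eqref{cc} from Lemma~\ref{ca} together with the polynomial decay hypothesis \eqref{pd} and the multiplicativity of $\sigma$. The key observation is that the complex-conjugate labels $\ast^{[\cdot]}$ are irrelevant for the size estimate, since $|z^{\ast^{[m]}}| = |z|$ for every $m$; this is exactly what makes the passage from \eqref{cc} to \eqref{cd} routine once Lemma~\ref{ca} is in hand.

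First I would take the modulus of both sides of \eqref{cc}. For $n^{(k)} = (m_j)_{1\leq j\leq 2p\sigma(\gamma^{(k)})}$ we get
\begin{align*}
\left|\cC^{(k,\gamma^{(k)})}(n^{(k)})\right| = \prod_{j=1}^{2p\sigma(\gamma^{(k)})}\left|c(m_j)\right|,
\end{align*}
using the operation property \eqref{lin} (the multiplicativity $(z_1z_2)^{\ast^{[m]}} = z_1^{\ast^{[m]}}z_2^{\ast^{[m]}}$) and $|z^{\ast^{[j-1]}}| = |z|$. Then I would apply the polynomial decay bound \eqref{pd}, namely $|c(m_j)| \leq A^{1/(2p)}(1+|m_j|)^{-\mathtt r}$, to each of the $2p\sigma(\gamma^{(k)})$ factors. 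Since $(n^{(k)})_j = m_j$, this yields
\begin{align*}
\left|\cC^{(k,\gamma^{(k)})}(n^{(k)})\right| \leq \prod_{j=1}^{2p\sigma(\gamma^{(k)})} A^{\frac{1}{2p}}\left(1+|(n^{(k)})_j|\right)^{-\mathtt r} = A^{\sigma(\gamma^{(k)})}\prod_{j=1}^{2p\sigma(\gamma^{(k)})}\left(1+|(n^{(k)})_j|\right)^{-\mathtt r},
\end{align*}
where the power of $A$ collects as $\left(A^{1/(2p)}\right)^{2p\sigma(\gamma^{(k)})} = A^{\sigma(\gamma^{(k)})}$. This is precisely \eqref{cd}, and it holds for every $k\geq 1$ and every branch $\gamma^{(k)}\in\Gamma^{(k)}$.

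Strictly speaking no separate induction is even needed here, since Lemma~\ref{ca} already encapsulates the inductive structure; the proof of Lemma~\ref{cl} is a one-line consequence of Lemma~\ref{ca}, the boundedness of $\ast^{[\cdot]}$ under $|\cdot|$, and \eqref{pd}. The only point that deserves a word of care — and the closest thing to an obstacle — is bookkeeping: one must make sure the number of factors is exactly $2p\sigma(\gamma^{(k)})$ (guaranteed by Proposition~\ref{2ps}, which identifies $\dim_{\bZ^\nu}\cN^{(k,\gamma^{(k)})} = 2p\sigma(\gamma^{(k)})$) so that the exponent of $A$ comes out as $\sigma(\gamma^{(k)})$ rather than something off by a factor. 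Everything else is immediate.
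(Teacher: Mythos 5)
Your proposal is correct and follows essentially the same route as the paper: the paper's proof also takes the modulus of the product formula from Lemma~\ref{ca}, applies the decay bound \eqref{pd} to each of the $2p\sigma(\gamma^{(k)})$ factors, and collects the powers of $A$ into $A^{\sigma(\gamma^{(k)})}$, with no additional induction. Your remark that the conjugation labels are harmless under $|\cdot|$ is exactly the (implicit) observation used in the paper.
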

\begin{proof}
For all $k\geq1, \gamma^{(k)}\in\Gamma^{(k)}$ and $n^{(k)}=(m_j)_{1\leq j\leq 2p\sigma(\gamma^{(k)})}$, it follows from Lemma \ref{ca} and \eqref{pd} that
\begin{align*}
|\cC^{(k,\gamma^{(k)})}(n^{(k)})|=&\prod_{j=1}^{2p\sigma(\gamma^{(k)})}|\left\{c(m_j)\right\}^{^{\ast^{[j-1]}}}|\\
\leq&\prod_{j=1}^{2p\sigma(\gamma^{(k)})} A^{\frac{1}{2p}}(1+|m_j|)^{-\mathtt r}\\
=&A^{\sigma(\gamma^{(k)})}\prod_{j=1}^{2p\sigma(\gamma^{(k)})} \left(1+|(n^{(k)})_j|\right)^{-\mathtt r}.
\end{align*}
This completes the proof of Lemma \ref{cl}.
\end{proof}

Next we estimate $\cI$ and obtain the following Lemma \ref{li}.
\begin{lemm}\label{li}
For all $k\geq1$, we have
\begin{align}\label{dd}
|\cI^{(k,\gamma^{(k)})}(t,n^{(k)})|\leq\frac{t^{\ell(\gamma^{(k)})}}{\cD(\gamma^{(k)})},
\end{align}
where
\begin{align}\label{pi}
\cD(\gamma^{(k)})=
\begin{cases}
1,&\gamma^{(k)}=0\in\Gamma^{(k)},k\geq1;\\
1,&\gamma^{(1)}=1\in\Gamma^{(1)};\\
\ell(\gamma^{(k)})\prod_{j=1}^{P}\cD(\gamma_j^{(k-1)}),&\gamma^{(k)}=(\gamma_j^{(k-1)})_{1\leq j\leq P}\in(\Gamma^{(k-1)})^{P},k\geq2.
\end{cases}
\end{align}
\end{lemm}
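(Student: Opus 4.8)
The plan is to argue by induction on $k$, matching the three-case recursive definition \eqref{pi} of $\cD$ against the three-case recursive definition \eqref{as1}--\eqref{n} of $\cI^{(k,\gamma^{(k)})}$ given in Lemma \ref{lee}. For the base cases, when $\gamma^{(k)}=0$ we have $\cI^{(k,0)}(t,n^{(k)})=e^{-{\rm i}\langle\cc\ca\cs(n^{(k)})\rangle^2 t}$, which has modulus exactly $1$, so \eqref{dd} holds with $\ell(0)=0$ and $\cD(0)=1$; and when $\gamma^{(1)}=1$, $\cI^{(1,1)}(t,n^{(1)})=\int_0^t e^{-{\rm i}\langle\cc\ca\cs(n^{(1)})\rangle^2(t-s)}\prod_{j=1}^P\{e^{-{\rm i}\langle n_j\rangle^2 s}\}^{\ast^{[j-1]}}{\rm d}s$, whose integrand has modulus $1$, so $|\cI^{(1,1)}(t,n^{(1)})|\leq t=t^{\ell(1)}/\cD(1)$ since $\ell(1)=1$ and $\cD(1)=1$.

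For the inductive step, suppose $k\geq 2$ and $\gamma^{(k)}=(\gamma_j^{(k-1)})_{1\leq j\leq P}$, with $n^{(k)}=(n_j^{(k-1)})_{1\leq j\leq P}$. From the recursive definition,
\begin{align*}
|\cI^{(k,\gamma^{(k)})}(t,n^{(k)})|
&\leq\int_0^t\Bigl|e^{-{\rm i}\langle\cc\ca\cs(n^{(k)})\rangle^2(t-s)}\Bigr|\prod_{j=1}^P\Bigl|\cI^{(k-1,\gamma_j^{(k-1)})}(s,n_j^{(k-1)})\Bigr|\,{\rm d}s\\
&\leq\int_0^t\prod_{j=1}^P\frac{s^{\ell(\gamma_j^{(k-1)})}}{\cD(\gamma_j^{(k-1)})}\,{\rm d}s
=\frac{1}{\prod_{j=1}^P\cD(\gamma_j^{(k-1)})}\int_0^t s^{\sum_{j=1}^P\ell(\gamma_j^{(k-1)})}\,{\rm d}s,
\end{align*}
where I used that $\{\,\cdot\,\}^{\ast^{[j-1]}}$ preserves modulus and the induction hypothesis. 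Evaluating the integral gives $\int_0^t s^{\sum_j\ell(\gamma_j^{(k-1)})}\,{\rm d}s = \frac{t^{1+\sum_j\ell(\gamma_j^{(k-1)})}}{1+\sum_j\ell(\gamma_j^{(k-1)})}=\frac{t^{\ell(\gamma^{(k)})}}{\ell(\gamma^{(k)})}$ by the definition \eqref{ee} of $\ell$ in case $k\geq 2$. Combining, $|\cI^{(k,\gamma^{(k)})}(t,n^{(k)})|\leq\frac{t^{\ell(\gamma^{(k)})}}{\ell(\gamma^{(k)})\prod_{j=1}^P\cD(\gamma_j^{(k-1)})}=\frac{t^{\ell(\gamma^{(k)})}}{\cD(\gamma^{(k)})}$, which is exactly \eqref{dd} for $k$; induction closes the argument.

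There is essentially no hard obstacle here: the statement is an elementary iterated-integral estimate, and the only points requiring care are bookkeeping — namely, making sure that the exponent $\sum_{j=1}^P\ell(\gamma_j^{(k-1)})$ produced by multiplying the $P$ inductive bounds, once incremented by $1$ from the outer time integration, coincides with $\ell(\gamma^{(k)})$ as defined by the recursion \eqref{ee}, and similarly that the denominator picks up exactly the factor $\ell(\gamma^{(k)})$ that appears in \eqref{pi}. One should also note that the bound is stated for $t\geq 0$ (so that $s^{\ell}$ is monotone on $[0,t]$ and the modulus estimates are clean); this is the regime in which the Picard iteration is run.
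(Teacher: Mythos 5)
Your proposal is correct and follows essentially the same route as the paper's proof: induction on $k$, with the two base cases handled by the modulus-one integrand observation, and the inductive step obtained by pulling the moduli inside the time integral, applying the induction hypothesis to each factor, and evaluating $\int_0^t s^{\sum_j\ell(\gamma_j^{(k-1)})}\,{\rm d}s$ so that the recursion \eqref{ee} for $\ell$ and the definition \eqref{pi} of $\cD$ match exactly. No gaps.
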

\begin{proof}
For all $k\geq1, \gamma^{(k)}=0\in\Gamma^{(k)}, \ell(0)=\sigma(0)-\frac{1}{2p}=0$ and $\cD(0)=1$ it follows from the definition of $\cI$ that
\begin{align*}
|\cI^{(k,0)}(t,n^{(k)})|
&=\frac{t^{\ell(0)}}{\cD(0)}.
\end{align*}

For $k=1, \gamma^{(1)}=1\in\Gamma^{(1)}, \ell(1)=\sigma(1)-\frac{1}{2p}=1$ and $\cD(1)=1$, by the definition of $\cI$, we have
\begin{align*}
|\cI^{(1,1)}(t,n^{(1)})|
\leq\int_0^t|e^{-{\rm i}\left\langle\cc\ca\cs(n^{(1)})\right\rangle^2(t-s)}|\prod_{j=1}^{P}|\left\{e^{-{\rm i}\langle n_j\rangle^2s}\right\}^{\ast^{[j-1]}}|{\rm d}s
=\frac{t^{\ell(1)}}{\cD(1)}.
\end{align*}
This proves that \eqref{dd} is true for $k=1$.

Let $k\geq2$. Assume that \eqref{dd} holds for all $1<k^\prime<k$.

For $k, (\gamma_j^{(k-1)})_{1\leq j\leq P}=\gamma^{(k)}\in(\Gamma^{(k-1)})^{P}$ and $ (n_j^{(k-1)})_{1\leq j\leq P}\in\prod_{j=1}^{P}\cN^{(k-1,\gamma_j^{(k-1)})}$, according to the definition of $\cI$ and the induction hypothesis, one can derive that
\begin{align*}
|\cI^{(k,\gamma^{(k)})}(t,n^{(k)})|
=&\int_0^t\prod_{j=1}^{P}|\cI^{(k-1,\gamma_j^{(k-1)})}(s,n_j^{(k-1)})|{\rm d}s\\
\leq&\int_0^t\prod_{j=1}^{P}\frac{s^{\ell(\gamma_j^{(k-1)})}}{\cD(\gamma_j^{(k-1)})}{\rm d}s\\
=&\frac{t^{\ell(\gamma^{(k)})}}{\cD(\gamma^{(k)})}.
\end{align*}
Here we use \ref{ee}. This shows that \eqref{dd} is true for $k$.

By induction, \eqref{dd} is true for all $k\in\bN$. The completes the proof of Lemma \ref{li}.
\end{proof}

Finally, {an estimate} of $\cF$ is the following Lemma \ref{fl}.

\begin{lemm}\label{fl}
For all $k\geq1$, we have
\begin{align}
|\cF^{(k,\gamma^{(k)})}(n^{(k)})|\leq1.
\end{align}
\end{lemm}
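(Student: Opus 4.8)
Lemma \ref{fl} asserts that $|\cF^{(k,\gamma^{(k)})}(n^{(k)})|\le 1$ for all $k\ge 1$ and all branches $\gamma^{(k)}\in\Gamma^{(k)}$. The plan is a straightforward induction on $k$, mirroring the inductive structure already used for Lemmas \ref{cl} and \ref{li}, and exploiting the fact that $\cF$ is built entirely out of factors of the imaginary unit ${\rm i}$ together with the operation $\ast^{[\cdot]}$, both of which have modulus $1$.

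First I would treat the base cases. For $k\ge 1$ and $\gamma^{(k)}=0\in\Gamma^{(k)}$, the definition gives $\cF^{(k,0)}(n^{(k)})=1$, so trivially $|\cF^{(k,0)}(n^{(k)})|=1\le 1$. For $k=1$ and $\gamma^{(1)}=1\in\Gamma^{(1)}$, the definition gives $\cF^{(1,1)}(n^{(1)})={\rm i}$, and $|{\rm i}|=1\le 1$. This establishes the claim for $k=1$ and for the trivial branch at every level.

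Next, for the inductive step, fix $k\ge 2$ and assume $|\cF^{(k-1,\gamma^{(k-1)})}(n^{(k-1)})|\le 1$ for every $\gamma^{(k-1)}\in\Gamma^{(k-1)}$ and every $n^{(k-1)}\in\cN^{(k-1,\gamma^{(k-1)})}$. For a nontrivial branch $\gamma^{(k)}=(\gamma_j^{(k-1)})_{1\le j\le P}\in(\Gamma^{(k-1)})^P$ with $n^{(k)}=(n_j^{(k-1)})_{1\le j\le P}$, the defining recursion reads
\[
\cF^{(k,\gamma^{(k)})}(n^{(k)})={\rm i}\prod_{j=1}^{P}\left\{\cF^{(k-1,\gamma_j^{(k-1)})}(n_j^{(k-1)})\right\}^{\ast^{[j-1]}}.
\]
Taking moduli and using the multiplicativity of $|\cdot|$ over products, together with the fact that complex conjugation preserves modulus (so $|z^{\ast^{[j-1]}}|=|z|$ for every $j$, by \eqref{c} and \eqref{co1}), we obtain
\[
|\cF^{(k,\gamma^{(k)})}(n^{(k)})|=|{\rm i}|\cdot\prod_{j=1}^{P}\left|\cF^{(k-1,\gamma_j^{(k-1)})}(n_j^{(k-1)})\right|\le 1\cdot\prod_{j=1}^{P}1=1,
\]
where each factor is bounded by $1$ via the induction hypothesis. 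This proves the bound for $k$, and by induction it holds for all $k\ge 1$.

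I do not anticipate a genuine obstacle here: $\cF$ carries none of the analytic content of the expansion — it is a pure phase/sign bookkeeping object, so the only thing to be careful about is the routine observation that $|z^{\ast^{[m]}}|=|z|$, which is immediate from the definition of $\ast^{[\cdot]}$ in \autoref{lc}. The mild care point, as in the proof of Lemma \ref{ca}, is simply to keep the $\ast^{[\cdot]}$-labels from the outer product correctly paired with the inductive factors; since conjugation is modulus-preserving, these labels play no role once absolute values are taken. Hence the proof is essentially one line of induction, and the lemma follows.
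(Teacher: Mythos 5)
Your proof is correct and follows exactly the induction the paper has in mind; the paper simply states that the lemma is ``easily obtained by induction,'' and your argument fills in precisely those routine details (base cases $\cF=1$ and $\cF={\rm i}$, then the recursion with $|{\rm i}|=1$ and modulus-preservation under $\ast^{[\cdot]}$).
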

\begin{proof}
This is easily obtained by induction.
\end{proof}

Putting {Lemmas} \ref{cl}, \ref{li} and \ref{fl} together yields an estimate on the size of $c_k$; see the following Lemma \ref{pl}.
\begin{lemm}\label{pl}
For all $k\geq1$, the Picard sequence $c_k(t,n)$ is $\mathtt r/2$-polynomially decaying. To be more precise, for all $k\geq1$, we have
\begin{align}\label{pdd}
|c_k(t,n)|\leq B(1+|n|)^{-\frac{\mathtt r}{2}},
\end{align}
where $B=\max\{A^{\frac{1}{2p}},A^{\frac{1}{2p}}P(2p)^{-1}
\mathfrak b(\mathtt r/2;\nu)\}$.
\end{lemm}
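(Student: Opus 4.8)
The plan is to substitute the combinatorial tree form \eqref{ct} from Lemma \ref{lee} into the three size bounds just established. Applying Lemmas \ref{cl}, \ref{li} and \ref{fl} termwise gives
\[
|c_k(t,n)|\leq\sum_{\gamma^{(k)}\in\Gamma^{(k)}}\frac{t^{\ell(\gamma^{(k)})}}{\cD(\gamma^{(k)})}\,A^{\sigma(\gamma^{(k)})}\sum_{\substack{n^{(k)}\in\cN^{(k,\gamma^{(k)})}\\\cc\ca\cs(n^{(k)})=n}}\prod_{j=1}^{2p\sigma(\gamma^{(k)})}\bigl(1+|(n^{(k)})_j|\bigr)^{-\mathtt r},
\]
so the estimate splits into two tasks: (i)~controlling the inner nested discrete convolution under the alternating-sum constraint $\cc\ca\cs(n^{(k)})=n$, and (ii)~summing the resulting weights over the branch set $\Gamma^{(k)}$ with a bound uniform in $k$.

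For (i) I would spend half of the available decay: since $1+|m|\geq1$ one has $(1+|m|)^{-\mathtt r}\leq(1+|m|)^{-\mathtt r/2}$, and the dimension hypothesis $2\leq\nu<\mathtt r/4$ forces $\mathtt r/2>\nu$, so the constant $\mathfrak b(\mathtt r/2;\nu)$ in the discrete convolution inequality $\sum_{m\in\bZ^\nu}(1+|m|)^{-\mathtt r/2}(1+|\cdot-m|)^{-\mathtt r/2}\leq\mathfrak b(\mathtt r/2;\nu)(1+|\cdot|)^{-\mathtt r/2}$ is finite (clearly $\mathfrak b(\mathtt r/2;\nu)\geq1$). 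After the change of variables $m_j\mapsto(-1)^{j-1}m_j$, which turns $\cc\ca\cs(n^{(k)})=n$ into an ordinary convolution constraint (cf.\ Proposition \ref{lemas}), the recursive structure of $\cN^{(k,\gamma^{(k)})}$ and of $\cc\ca\cs$ shows that the inner sum factors through one $P$-fold convolution at each ``merge node'' of the tree $\gamma^{(k)}$; each such merge costs a factor $\mathfrak b(\mathtt r/2;\nu)^{P-1}=\mathfrak b(\mathtt r/2;\nu)^{2p}$, and the number of merge nodes on $\gamma^{(k)}$ is exactly $\ell(\gamma^{(k)})$. An induction on $k$ using \eqref{g} and \eqref{ee} then gives
\[
\sum_{\substack{n^{(k)}\in\cN^{(k,\gamma^{(k)})}\\\cc\ca\cs(n^{(k)})=n}}\prod_{j=1}^{2p\sigma(\gamma^{(k)})}\bigl(1+|(n^{(k)})_j|\bigr)^{-\mathtt r}\leq\mathfrak b(\mathtt r/2;\nu)^{2p\ell(\gamma^{(k)})}(1+|n|)^{-\mathtt r/2},
\]
which is exactly where the ``slightly worse'' exponent $\mathtt r/2$ appears.

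For (ii) I would use Proposition \ref{propsl} to rewrite $A^{\sigma(\gamma^{(k)})}=A^{1/(2p)}A^{\ell(\gamma^{(k)})}$ and $\mathfrak b(\mathtt r/2;\nu)^{2p\ell(\gamma^{(k)})}=\mathfrak b(\mathtt r/2;\nu)^{2p\sigma(\gamma^{(k)})-1}$, factor out $A^{1/(2p)}(1+|n|)^{-\mathtt r/2}$, and reduce to
\[
|c_k(t,n)|\leq A^{1/(2p)}(1+|n|)^{-\mathtt r/2}\,S_k(x),\qquad S_k(x):=\sum_{\gamma^{(k)}\in\Gamma^{(k)}}\frac{x^{\ell(\gamma^{(k)})}}{\cD(\gamma^{(k)})},\quad x:=t\,A\,\mathfrak b(\mathtt r/2;\nu)^{2p}.
\]
The point is then a bound on $S_k(x)$ uniform in $k$. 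Using $\tfrac{x^{m+1}}{m+1}=\int_0^x y^m\,{\rm d}y$ together with the recursions \eqref{g}, \eqref{ee}, \eqref{pi}, I would check by induction that $S_1(x)=1+x$ and $S_k(x)=1+\int_0^x S_{k-1}(y)^{P}\,{\rm d}y$ for $k\geq2$, so that the $S_k$ are precisely the Picard iterates, started from the constant $1$, of the scalar problem $S'=S^{P}$, $S(0)=1$, whose solution is $S(x)=(1-2px)^{-1/(2p)}$ on $[0,1/(2p))$. Since the iteration map $\phi\mapsto1+\int_0^{\,\cdot}\phi^{P}$ is order-preserving and $1\leq S$ there, it follows that $S_k(x)\leq S(x)$ for every $k$. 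Finally, for $0\leq t\leq t_0$ one has $x\leq t_0A\,\mathfrak b(\mathtt r/2;\nu)^{2p}=(2p)^{2p}P^{-P}=P^{-1}(2p/P)^{2p}<P^{-1}<1/(2p)$, and $S\bigl((2p)^{2p}P^{-P}\bigr)\leq P/(2p)$, which is equivalent to $(2p/P)^{2p}(4p+1)\leq2p+1$ and follows from $(1-1/P)^{P}<e^{-1}$. Hence $|c_k(t,n)|\leq A^{1/(2p)}\tfrac{P}{2p}(1+|n|)^{-\mathtt r/2}\leq B\,(1+|n|)^{-\mathtt r/2}$ with $B$ as in the statement (the factor $\mathfrak b(\mathtt r/2;\nu)\geq1$ in $B$ being slack), which is \eqref{pdd}.

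The hard part is the uniform-in-$k$ control in step (ii): termwise both the inner convolution factors $\mathfrak b(\mathtt r/2;\nu)^{2p\ell(\gamma^{(k)})}$ and the branch-set weights blow up as $k\to\infty$, and the fact that their combination stays bounded becomes transparent only once $S_k$ is recognized as the Picard sequence of the globally bounded scalar ODE $S'=S^{P}$. The enabler for that recognition is carrying out step (i) with the \emph{exact} exponent $2p\ell(\gamma^{(k)})$ rather than a cruder one, which relies on the good combinatorial structure of $\cC$ (Lemma \ref{ca}) and on the merge-node count coinciding with $\ell$; this matching is the main technical ingredient underneath the argument.
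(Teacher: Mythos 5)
Your step (ii) is correct and is a genuinely different (and rather elegant) route from the paper: you identify $S_k(x)=\sum_{\gamma^{(k)}}x^{\ell(\gamma^{(k)})}/\cD(\gamma^{(k)})$ with the Picard iterates of the scalar problem $S'=S^{P}$, $S(0)=1$, and compare with $S(x)=(1-2px)^{-1/(2p)}$; the recursion $S_k=1+\int_0^x S_{k-1}^P$ does follow from \eqref{g}, \eqref{ee}, \eqref{pi}, and the numerical check $(2p/P)^{2p}(4p+1)\leq 2p+1$ holds for all $p\geq1$. The paper instead proves the uniform bound $\sum_{\gamma^{(k)}}\lozenge^{\ell}/\cD\leq P/(2p)$ directly by induction (Lemma \ref{gal}); your ODE-comparison explains \emph{why} that bound and the threshold $(2p)^{2p}P^{-P}$ appear, which is a worthwhile observation.

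The gap is in step (i). Your bound
\[
\sum_{\substack{n^{(k)}\in\cN^{(k,\gamma^{(k)})}\\ \cc\ca\cs(n^{(k)})=n}}\prod_{j}\bigl(1+|(n^{(k)})_j|\bigr)^{-\mathtt r}\leq \mathfrak b(\mathtt r/2;\nu)^{2p\ell(\gamma^{(k)})}(1+|n|)^{-\mathtt r/2}
\]
rests on the discrete convolution inequality $\sum_{m}(1+|m|)^{-\mathtt r/2}(1+|x-m|)^{-\mathtt r/2}\leq\mathfrak b(\mathtt r/2;\nu)(1+|x|)^{-\mathtt r/2}$ with the \emph{specific} constant $\mathfrak b(\mathtt r/2;\nu)$, which you assert but do not prove, and which appears nowhere in the paper: Lemma \ref{lemmriem} only bounds the single sum $\mathscr H(\mathtt s;\nu)$ by $\mathfrak b(\mathtt s;\nu)$, not a convolution. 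The standard splitting argument ($|m|\geq|x|/2$ or $|x-m|\geq|x|/2$) yields a constant of size roughly $2^{\mathtt r/2+1}\mathscr H(\mathtt r/2;\nu)$, and whether this is dominated by $\mathfrak b(\mathtt r/2;\nu)$ is not established. This matters quantitatively: your rescaled variable is $x=tA\,C^{2p}$ with $C$ the convolution constant, and if $C>\mathfrak b(\mathtt r/2;\nu)$ then on part of $[0,t_0]$ (with $t_0$ as in \eqref{t0}) one can have $x\geq 1/(2p)$, past the blow-up point of $S$, so the stated $B$ and $t_0$ are not recovered. The paper avoids convolution inequalities altogether: it splits each leaf weight as $(1+|m_j|)^{-\mathtt r}=(1+|m_j|)^{-\mathtt r/2}(1+|m_j|)^{-\mathtt r/2}$, uses the generalized Bernoulli inequality together with the $\ell^1$ triangle inequality under the constraint $\cc\ca\cs(n^{(k)})=n$ to extract the factor $(1+|n|)^{-\mathtt r/2}$ from one half, and then drops the constraint and sums the other half coordinatewise, which gives the exponent $2p\sigma(\gamma^{(k)})=2p\ell(\gamma^{(k)})+1$ instead of your $2p\ell(\gamma^{(k)})$; the one extra factor $\mathfrak b(\mathtt r/2;\nu)$ is exactly what produces the second entry of the max in $B$. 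If you replace your step (i) by this argument, your step (ii) then closes the proof with precisely the stated constants.
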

\begin{proof}
First, it follows from Lemma \ref{lee}, {Lemmas} \ref{cl}-\ref{fl}  and Proposition \ref{propsl} that
\begin{align*}
&|c_k(t,n)|\\
\leq&\sum_{\gamma^{(k)}\in\Gamma^{(k)}}\sum_{\substack{n^{(k)}\in\cN^{(k,\gamma^{(k)})}\\\cc\ca\cs(n^{(k)})=n}}A^{\sigma(\gamma^{(k)})}\prod_{j=1}^{2p\sigma(\gamma^{(k)})} \left(1+|(n^{(k)})_j|\right)^{-\mathtt r}\cdot \frac{t^{\ell(\gamma^{(k)})}}{\cD(\gamma^{(k)})}\\
=&A^{\frac{1}{2p}}\sum_{\gamma^{(k)}\in\Gamma^{(k)}}\frac{(At)^{\ell(\gamma^{(k)})}}{\cD(\gamma^{(k)})}\sum_{\substack{n^{(k)}\in\cN^{(k,\gamma^{(k)})}\\\cc\ca\cs(n^{(k)})=n}}\prod_{j=1}^{2p\sigma(\gamma^{(k)})} \left(1+|(n^{(k)})_j|\right)^{-\mathtt r/2}\cdot\prod_{j=1}^{2p\sigma(\gamma^{(k)})} \left(1+|(n^{(k)})_j|\right)^{-\mathtt r/2}.
\end{align*}

On the one hand, under the {splitting} condition $\cc\ca\cs(n^{(k)})=n$, applying the generalized Bernoulli inequality \eqref{mie} and the triangle inequality of $\ell^1$-norm \eqref{ine2} yields that
\begin{align*}
\prod_{j=1}^{2p\sigma(\gamma^{(k)})} \left(1+|(n^{(k)})_j|\right)^{-\mathtt r/2}
\leq&\left(1+\sum_{j=1}^{2p\sigma(\gamma^{(k)})}|(n^{(k)})_j|\right)^{-\mathtt r/2}\\
=&\left(1+\sum_{j=1}^{2p\sigma(\gamma^{(k)})}|(-1)^{j-1}(n^{(k)})_j|\right)^{-\mathtt r/2}\\
{\leq}&{\left(1+\left|\sum_{j=1}^{2p\sigma(\gamma^{(k)})}(-1)^{j-1}(n^{(k)})_j\right|\right)^{-\mathtt r/2}}\\
=&\left(1+|\cc\ca\cs(n^{(k)})|\right)^{-\mathtt r/2}\\
=&(1+|n|)^{-\mathtt r/2}.
\end{align*}

On the other hand, it follows from \eqref{dimsi}, \eqref{mm}, \eqref{hb} and Lemma \ref{lemmriem} that
\begin{align*}
\sum_{\substack{n^{(k)}\in\cN^{(k,\gamma^{(k)})}\\\cc\ca\cs(n^{(k)})=n}}\prod_{j=1}^{2p\sigma(\gamma^{(k)})} \left(1+|(n^{(k)})_j|\right)^{-\mathtt r/2}\leq&\sum_{n^{(k)}\in(\bZ^\nu)^{2p\sigma(\gamma^{(k)})}}\prod_{j=1}^{2p\sigma(\gamma^{(k)})} \left(1+|(n^{(k)})_j|\right)^{-\mathtt r/2}\\
=&\prod_{j=1}^{2p\sigma(\gamma^{(k)})}\mathscr H(\mathtt r/2;\nu)\\
\leq&\left(\mathfrak b(\mathtt r/2;\nu)\right)^{2p\sigma(\gamma^{(k)})}.
\end{align*}

Furthermore, one can derive that
\begin{align*}
|c_k(t,n)|\leq&(1+|n|)^{-\mathtt r/2}\cdot A^{\frac{1}{2p}}
\mathfrak b(\mathtt r/2;\nu)\cdot\sum_{\gamma^{(k)}\in\Gamma^{(k)}}\frac{\left(A\left(\mathfrak b(\mathtt r/2;\nu)\right)^{2p}t\right)^{\ell(\gamma^{(k)})}}{\cD(\gamma^{(k)})}.
\end{align*}

Finally, it follows from Lemma \ref{gal} that if $0<A\left(\mathfrak b(\mathtt r/2;\nu)\right)^{2p}t\leq\frac{(2p)^{2p}}{P^{P}}$, that is,
\begin{align}\label{t0}
0<t\leq\frac{(2p)^{2p}}{A\left(\mathfrak b(\mathtt r/2;\nu)\right)^{2p}P^{P}}\triangleq t_0,
\end{align}
then
\begin{align}\label{pdc2}
|c_k(t,n)|\leq B(1+|n|)^{-\mathtt r/2},\quad\forall n\in\bZ^\nu,
\end{align}
where $B=A^{\frac{1}{2p}}\frac{P}{2p}
\mathfrak b(\mathtt r/2;\nu)$. This shows that the Picard sequence $c_k(t,n)$ is $\mathtt r/2$-polynomially decaying on a suitable time interval $(0,t_0]$. This completes the proof of Lemma \ref{pl}.
\end{proof}

\subsection{The Picard Sequence is a Cauchy Sequence}

In this subsection, we  will use the original form \eqref{ck} of the Picard sequence and its polynomial decay property to prove that it is a Cauchy sequence.

We first estimate {the difference between consecutive terms of the Picard sequence} $\{c_k(t,n)\}$ and obtain the following Lemma \ref{nl}.
\begin{lemm}\label{nl}
For all $t\in(0,t_0], n\in\bZ^\nu$, and $k\geq1$, we have
\begin{align}
\label{alp}|c_{k}(t,n)-c_{k-1}(t,n)|\leq&\frac{P^{k-1}B^{(P-1)k+1}t^k}{k!}\sum_{\substack{n_j\in\bZ^\nu,j=1,\cdots,(P-1)k+1\\\sum_{j=1}^{(P-1)k+1}(-1)^{j-1}n_j=n}}\left\{\prod_{j=1}^{(P-1)k+1}(1+|n_j|)\right\}^{-\mathtt r/2}\\
\label{alq}\leq&\frac{B\mathfrak b(\mathtt r/4;\nu)}{P}\cdot\frac{\left\{P\left(B\mathfrak b(\mathtt r/4;\nu)\right)^{P-1}t\right\}^{k}}{k!}\cdot(1+|n|)^{-\mathtt r/4}.
\end{align}
This implies that $\{c_k(t,n)\}_{k\in\mathbb N}$ is a Cauchy sequence.
\end{lemm}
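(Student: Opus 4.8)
The plan is to prove the two displayed inequalities \eqref{alp} and \eqref{alq} by induction on $k$, using the explicit form \eqref{ck} of the Picard iteration together with the decay estimate \eqref{pdd} from Lemma~\ref{pl}. For the base case $k=1$, I would write $c_1(t,n)-c_0(t,n)$ as the single integral term in \eqref{ck} with $c_{k-1}$ replaced by $c_0$; since $|c_0(s,n_j)|=|c(n_j)|\le A^{1/2p}(1+|n_j|)^{-\mathtt r}\le B(1+|n_j|)^{-\mathtt r/2}$ and the oscillatory exponential has modulus $1$, the $s$-integral contributes a factor $t$, and one obtains precisely \eqref{alp} with $k=1$ (note $(P-1)\cdot 1 + 1 = P$ Fourier indices, matching the $P$-fold product in the nonlinearity). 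For the inductive step, I would use the telescoping identity
\begin{align*}
c_{k}(t,n)-c_{k-1}(t,n) = {\rm i}\int_0^t e^{-{\rm i}\langle n\rangle^2(t-s)}\Bigl(\mathcal{P}[c_{k-1}](s,n) - \mathcal{P}[c_{k-2}](s,n)\Bigr)\,{\rm d}s,
\end{align*}
where $\mathcal{P}[c]$ denotes the $P$-linear nonlinear map appearing in \eqref{ck}. The difference of two $P$-linear forms is bounded, via a standard telescoping in the $P$ factors, by a sum of $P$ terms, each a product where one factor is $|c_{k-1}(s,n_{j_0})-c_{k-2}(s,n_{j_0})|$ and the remaining $P-1$ factors are bounded using \eqref{pdd} by $B(1+|n_j|)^{-\mathtt r/2}$. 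Feeding in the inductive bound \eqref{alp} for the difference factor, carrying out the $s$-integral (which replaces $s^{k-1}/(k-1)!$ by $t^k/k!$), and merging the index sums — the $(P-1)(k-1)+1$ indices from the inductive hypothesis plus the $P-1$ new indices is $(P-1)k+1$ indices — yields \eqref{alp} for $k$, with the extra factor of $P$ from the $P$ choices of $j_0$ and the extra powers of $B$ bookkept correctly.

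To pass from \eqref{alp} to \eqref{alq}, I would split each weight $(1+|n_j|)^{-\mathtt r/2} = (1+|n_j|)^{-\mathtt r/4}\cdot(1+|n_j|)^{-\mathtt r/4}$ and argue exactly as in the proof of Lemma~\ref{pl}: under the constraint $\sum_{j=1}^{(P-1)k+1}(-1)^{j-1}n_j=n$, one factor $\prod_j (1+|n_j|)^{-\mathtt r/4}$ is bounded by $(1+|n|)^{-\mathtt r/4}$ using the generalized Bernoulli inequality \eqref{mie} and the $\ell^1$-triangle inequality \eqref{ine2}; the remaining factor $\prod_j (1+|n_j|)^{-\mathtt r/4}$ is summed freely over $(\bZ^\nu)^{(P-1)k+1}$, giving $\bigl(\mathscr H(\mathtt r/4;\nu)\bigr)^{(P-1)k+1}\le\bigl(\mathfrak b(\mathtt r/4;\nu)\bigr)^{(P-1)k+1}$ by Lemma~\ref{lemmriem} and the associated estimate. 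This requires $\mathtt r/4 > \nu$, i.e.\ $\nu < \mathtt r/4$, which is exactly the hypothesis of Theorem~\ref{cnlsth}; this is where the dimensional restriction is consumed. Collecting the constants $P^{k-1}B^{(P-1)k+1}\bigl(\mathfrak b(\mathtt r/4;\nu)\bigr)^{(P-1)k+1} = \tfrac{B\mathfrak b(\mathtt r/4;\nu)}{P}\bigl(P(B\mathfrak b(\mathtt r/4;\nu))^{P-1}\bigr)^k$ gives \eqref{alq}.

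Finally, the Cauchy claim is immediate from \eqref{alq}: for $m > k$,
\begin{align*}
|c_m(t,n)-c_k(t,n)| \le \sum_{j=k+1}^{m} |c_j(t,n)-c_{j-1}(t,n)| \le \frac{B\mathfrak b(\mathtt r/4;\nu)}{P}(1+|n|)^{-\mathtt r/4}\sum_{j=k+1}^{\infty}\frac{\bigl(P(B\mathfrak b(\mathtt r/4;\nu))^{P-1}t\bigr)^j}{j!},
\end{align*}
and the tail of the exponential series tends to $0$ as $k\to\infty$, uniformly in $n$ (indeed uniformly in $x$ after multiplying by $e^{{\rm i}\langle n\rangle x}$ and summing, since $\sum_n (1+|n|)^{-\mathtt r/4} < \infty$ when $\mathtt r/4 > \nu$).

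The main obstacle I anticipate is the careful bookkeeping in the inductive step: tracking how the number of Fourier indices grows from $(P-1)(k-1)+1$ to $(P-1)k+1$, correctly accounting for the powers of $B$ and $P$, and handling the interaction between the complex-conjugate labels $\ast^{[\cdot]}$ and the telescoping of the $P$-linear form (the labels do not affect absolute values, so modulus bounds go through, but one must be careful that the index constraint $\sum (-1)^{j-1} n_j = n$ is preserved under the merging of the two index sets — this works because the alternating sign pattern is consistent across the recursion, as formalized in Proposition~\ref{lemas}). The analytic estimates themselves (oscillatory integral bounds, Bernoulli inequality, convergence of the Riemann-type sum) are routine given the lemmas already established.
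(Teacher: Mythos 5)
Your proposal is correct and follows essentially the same route as the paper: induction on $k$ with the telescoping bound \eqref{deco} for the $P$-linear form, the inductive insertion of the $(P-1)(k-1)+1$ indices in place of $n_J$ (the paper records this via the explicit permutation \eqref{dp}--\eqref{dp2}, which is the precise form of your sign-consistency remark), the split $(1+|n_j|)^{-\mathtt r/2}=(1+|n_j|)^{-\mathtt r/4}(1+|n_j|)^{-\mathtt r/4}$ with the generalized Bernoulli inequality and Lemma \ref{lemmriem} to get \eqref{alq}, and the exponential-tail argument for the Cauchy property. No gaps worth flagging.
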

\begin{proof}
For $k=1$, it follows from the definition of \eqref{ck} and the polynomial decay condition \eqref{pdd} for the initial Fourier data $c$ that
\begin{align*}
|c_1(t,n)-c_0(t,n)| 
& \leq \int_0^t\sum_{\substack{n_j\in\bZ^\nu,~~j=1,\cdots,P\\\sum_{j=1}^{P}(-1)^{j-1}n_j=n}}
\prod_{j=1}^{P}\left|c_{0}(s,n_j)\right|{\rm d}s\\
& = B^{P}t\sum_{\substack{n_j\in\bZ^\nu,~~j=1,\cdots,P\\\sum_{j=1}^{P}(-1)^{j-1}n_j=n}}
\left\{\prod_{j=1}^{P}(1+|n_j|)\right\}^{-\mathtt r/2}.
\end{align*}
This shows that \eqref{alp} is true for $k=1$.

Let $k\geq2$. Assume that \eqref{alp} holds for all $1<k^\prime<k$.

By the definition of \eqref{ck} and the polynomial decay property \eqref{pdd} along with the following decomposition
\begin{align}\label{deco}
|\prod_{j=1}^{j_0} a_j-\prod_{j=1}^{j_0} b_j|
\leq\sum_{J=1}^{j_0}\prod_{j=1}^{J-1}|b_j|\cdot|a_{J}-
b_{J}|\cdot\prod_{j=J+1}^{{j_0}}|a_j|,
\end{align}
where
\begin{align}\label{deco2}
\prod_{j=1}^0|b_j|:=1\quad\text{and}\quad\prod_{j={j_0}+1}^{j_0}|a_j|:=1,
\end{align}
 one can derive that
\begin{align*}
&|c_k(t,n)-c_{k-1}(t,n)|\\
\leq&\int_0^t\sum_{\substack{n_j\in\bZ^\nu,j=1,\cdots,P\\\sum_{j=1}^{P}(-1)^{j-1}n_j=n}}|\prod_{j=1}^{P}
\{c_{k-1}(s,n_j)\}^{\ast^{[j-1]}}-\prod_{j=1}^{P}\{c_{k-2}(s,n_j)\}^{\ast^{[j-1]}}|{\rm d}s\\
\leq&\sum_{J=1}^{P}\int_0^t\sum_{\substack{n_j\in\bZ^\nu,j=1,\cdots,P\\\sum_{j=1}^{P}(-1)^{j-1}n_j=n}}\prod_{j=1}^{J-1}
|\{c_{k-2}(s,n_j)\}^{\ast^{[j-1]}}|\cdot\prod_{j=J+1}^{P}|\{c_{k-1}(s,n_j)\}^{\ast^{[j-1]}}|\\
&\cdot|\{c_{k-1}(s,n_J)\}^{\ast^{[J-1]}}-\{c_{k-2}(s,n_J)\}^{\ast^{[J-1]}}|{\rm d}s\\
\leq&\sum_{J=1}^{P}\int_0^t\sum_{\substack{n_j\in\bZ^\nu,j=1,\cdots,P\\\sum_{j=1}^{P}(-1)^{j-1}n_j=n}}\prod_{j=1}^{J-1}B(1+|n_j|)^{-\mathtt r/2}\cdot\prod_{j=J+1}^{P}B(1+|n_j|)^{-\mathtt r/2}\\
&\cdot\frac{P^{k-2}B^{(P-1)(k-1)+1}s^{k-1}}{(k-1)!}\sum_{\substack{m_j\in\bZ^\nu,j=1,\cdots,(P-1)(k-1)+1\\\sum_{j=1}^{(P-1)(k-1)+1}(-1)^{j-1}m_j=n_J}}\left\{\prod_{j=1}^{(P-1)(k-1)+1}(1+|m_j|)\right\}^{-\mathtt r/2}{\rm d}s\\
=&\sum_{J=1}^{P}\frac{P^{k-2}B^{(P-1)k+1}t^k}{k!}\sum_{\substack{q_j\in\bZ^\nu,j=1,\cdots,(P-1)k+1\\\sum_{j=1}^{(P-1)k+1}(-1)^{j-1}q_j=n}}\prod_{j=1}^{J-1}(1+|q_j|)^{-\mathtt r/2}\\
&\cdot\prod_{j=J+1+(P-1)(k-1)}^{(P-1)k+1}(1+|q_j|)^{-\mathtt r/2}
\cdot\prod_{j=(J-1)+1}^{(J-1)+(P-1)(k-1)+1}(1+|q_j|)^{-\mathtt r/2}\\
=&\frac{P^{k-1}B^{(P-1)k+1}t^k}{k!}\sum_{\substack{q_j\in\bZ^\nu,j=1,\cdots,(P-1)k+1\\\sum_{j=1}^{(P-1)k+1}(-1)^{j-1}q_j=n}}\left\{\prod_{j=1}^{(P-1)k+1}(1+|q_j|)\right\}^{-\mathtt r/2}.
\end{align*}
Here we use the following decomposition and permutation:
\begin{align}\label{dp}
n=&\sum_{j=1}^{P}(-1)^{j-1}n_j\nonumber\\
=&\sum_{j=1}^{J-1}(-1)^{j-1}n_j+(-1)^{J-1}n_J+\sum_{j=J+1}^{P}(-1)^{j-1}n_j\nonumber\\
=&\sum_{j=1}^{J-1}(-1)^{j-1}n_j+(-1)^{J-1}\sum_{j=1}^{(P-1)(k-1)+1}(-1)^{j-1}m_j+\sum_{j=J+1}^{P}(-1)^{j-1}n_j\nonumber\\
=&\sum_{j=1}^{J-1}(-1)^{j-1}n_j+(-1)^{J-1}\sum_{j=1}^{(P-1)(k-1)+1}(-1)^{j-1}m_j+\sum_{j=J+1}^{P}(-1)^{j-1}n_j\nonumber\\
=&\sum_{j=1}^{J-1}(-1)^{j-1}q_j+\sum_{j=(J-1)+1}^{(J-1)+(P-1)(k-1)+1}(-1)^{j-1}q_j+\sum_{j=J+1+(P-1)(k-1)}^{(P-1)k+1}(-1)^{j-1}q_j\nonumber\\
=&\sum_{j=1}^{(P-1)k+1}(-1)^{j-1}q_j,
\end{align}
where
\begin{align}\label{dp2}
q_j=
\begin{cases}
n_j,&1\leq j\leq J-1;\\
m_{-(J-1)+j},&(J-1)+1\leq j\leq(J-1)+(P-1)(k-1)+1;\\
n_{-(P-1)(k-1)+j},&(J-1)+(P-1)(k-1)+1+1\leq j\leq (P-1)k+1.
\end{cases}
\end{align}
This shows that \eqref{alp} holds for $k$.

By induction, \eqref{alp} is true for all $k\in\bN$. This completes the proof of \eqref{alp}.

Furthermore, it follows from \eqref{alp} and the generalized Bernoulli inequality \eqref{gbi} that
\begin{align*}
&\sum_{\substack{q_j\in\bZ^\nu,j=1,\cdots,(P-1)k+1\\\sum_{j=1}^{(P-1)k+1}(-1)^{j-1}q_j=n}}\left\{\prod_{j=1}^{(P-1)k+1}(1+|q_j|)\right\}^{-\mathtt r/2}\\
\leq&\sum_{\substack{q_j\in\bZ^\nu,j=1,\cdots,(P-1)k+1\\\sum_{j=1}^{(P-1)k+1}(-1)^{j-1}q_j=n}}\prod_{j=1}^{(P-1)k+1}(1+|q_j|)^{-\mathtt r/4}\cdot\left\{1+\sum_{j=1}^{(P-1)k+1}|(-1)^{j-1}q_j|\right\}^{-\mathtt r/4}\\
\leq&(1+|n|)^{-\mathtt r/4}\sum_{\substack{q_j\in\bZ^\nu\\j=1,\cdots,(P-1)k+1}}\prod_{j=1}^{(P-1)k+1}(1+|q_j|)^{-\mathtt r/4}\\
=&\left\{\mathfrak b(\mathtt r/4;\nu)\right\}^{(P-1)k+1}(1+|n|)^{-\mathtt r/4}.
\end{align*}
Hence we have
\begin{align*}
|c_k(t,n)-c_{k-1}(t,n)|\leq\frac{B\mathfrak b(\mathtt r/4;\nu)}{P}\cdot\frac{\left\{P\left(B\mathfrak b(\mathtt r/4;\nu)\right)^{P-1}t\right\}^{k}}{k!}\cdot(1+|n|)^{-\mathtt r/4},\quad \forall k\geq1.
\end{align*}
This completes the proof of \eqref{alq} and hence the proof of Lemma \ref{nl}.
\end{proof}

Next we estimate {the distance between} any two terms of the Picard sequence and obtain the following Lemma \ref{sdf}.
\begin{lemm}\label{sdf}
For any $n\in\bZ^\nu, t\in(0,t_0]$, and all $k,k^\prime\in\bN$, we have
\begin{align*}
|c_{k+k^\prime}(t,n)-c_{k}(t,n)|\leq\frac{B\mathfrak b(\mathtt r/4;\nu)}{P}\sum_{j=1}^{\infty}\frac{\left\{P\left(B\mathfrak b(\mathtt r/4;\nu)\right)^{P-1}t\right\}^{k+j}}{(k+j)!}\cdot(1+|n|)^{-\mathtt r/4}.
\end{align*}
Hence $\{c_k(t,n)\}$ is a Cauchy sequence on $(t,n)\in(0,t_0]\times\bZ^\nu$.
\begin{proof}
It follows from the triangle inequality and Lemma \ref{nl} that
\begin{align*}
|c_{k+k^\prime}(t,n)-c_k(t,n)|&\leq\sum_{j=1}^{k^\prime}|c_{k+j}(t,n)-c_{k+j-1}(t,n)|\\
&\leq\frac{B\mathfrak b(\mathtt r/4;\nu)}{P}\sum_{j=1}^{\infty}\frac{\left\{P\left(B\mathfrak b(\mathtt r/4;\nu)\right)^{P-1}t\right\}^{k+j}}{(k+j)!}\cdot(1+|n|)^{-\mathtt r/4}.
\end{align*}
Notice that
\[\sum_{j=1}^{\infty}\frac{\left\{P\left(B\mathfrak b(\mathtt r/4;\nu)\right)^{P-1}t\right\}^{k+j}}{(k+j)!}\]
can be viewed as the $k+1$-th remainder of $\exp\left(P\left(B\mathfrak b(\mathtt r/4;\nu)\right)^{P-1}t\right)$. Hence
$|c_{k+k^\prime}(t,n)-c_{k}(t,n)|$ tends to $0$ uniformly with respect to $k^\prime,t\in(0,t_0]$ and $n\in\bZ^\nu$ by letting $k\rightarrow\infty$. This shows that $\{c_k(t,n)\}_{k\in\mathbb N}$ is a Cauchy sequence on $(t,n)\in(0,t_0]\times\bZ^\nu$. This completes the proof of Lemma \ref{sdf}.
\end{proof}
\end{lemm}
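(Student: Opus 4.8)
The plan is to obtain the gap between two arbitrary members of the Picard sequence by telescoping over consecutive gaps, each of which is already controlled by the one-step bound \eqref{alq} of Lemma~\ref{nl}. First I would write the identity
\[
c_{k+k^\prime}(t,n)-c_k(t,n)=\sum_{j=1}^{k^\prime}\bigl(c_{k+j}(t,n)-c_{k+j-1}(t,n)\bigr),
\]
apply the triangle inequality in $\bC$, and insert \eqref{alq} into each summand. This yields a finite sum whose general term is exactly $\frac{B\mathfrak b(\mathtt r/4;\nu)}{P}\cdot\frac{\{P(B\mathfrak b(\mathtt r/4;\nu))^{P-1}t\}^{k+j}}{(k+j)!}(1+|n|)^{-\mathtt r/4}$. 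Since every term is nonnegative, enlarging the summation range from $j\in\{1,\dots,k^\prime\}$ to $j\in\{1,2,\dots\}$ only increases the right-hand side, and this produces precisely the inequality in the statement, with a bound independent of $k^\prime$.

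For the Cauchy conclusion I would set $x:=P(B\mathfrak b(\mathtt r/4;\nu))^{P-1}t$ and observe that
\[
\sum_{j=1}^{\infty}\frac{x^{k+j}}{(k+j)!}
\]
is the $(k+1)$-st tail of the convergent series for $e^{x}$, hence it tends to $0$ as $k\to\infty$. Because $t$ ranges over the bounded interval $(0,t_0]$, one has $0<x\le x_0:=P(B\mathfrak b(\mathtt r/4;\nu))^{P-1}t_0$, so the tail is dominated by the corresponding tail of $e^{x_0}$ and the decay in $k$ is uniform in $t$; together with $(1+|n|)^{-\mathtt r/4}\le 1$ this shows that $|c_{k+k^\prime}(t,n)-c_k(t,n)|\to 0$ as $k\to\infty$, uniformly in $k^\prime\in\bN$, $t\in(0,t_0]$ and $n\in\bZ^\nu$. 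Hence $\{c_k(t,n)\}_{k\in\bN}$ is a uniform Cauchy sequence on $(0,t_0]\times\bZ^\nu$.

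I do not expect a genuine obstacle here: the substantive analytic and combinatorial work — the per-step difference estimate relying on the product decomposition \eqref{deco}, the re-indexing \eqref{dp}--\eqref{dp2} of the convolution variables, and the generalized Bernoulli inequality used to absorb the extra variables — has already been carried out in Lemma~\ref{nl}. The only points requiring care are the validity of the telescoping identity for every $k^\prime\ge 1$ (the case $k^\prime=0$ being trivial), and the passage from a finite sum to the infinite tail, which is legitimate precisely because all the terms are nonnegative, so that no question of convergence arises until the final identification with the remainder of the exponential series.
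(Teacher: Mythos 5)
Your proposal is correct and follows essentially the same route as the paper: telescoping the difference, applying the one-step bound \eqref{alq} from Lemma~\ref{nl}, enlarging the finite sum to the infinite tail, and identifying that tail as the remainder of the exponential series to get uniformity in $k^\prime$, $t\in(0,t_0]$ and $n\in\bZ^\nu$. Your added remark that the tail is dominated by the one at $t_0$ makes the uniformity in $t$ slightly more explicit than the paper's wording, but there is no substantive difference.
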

\begin{rema}
It should be emphasized that the above estimates are different from those in \cite{DLX22AR} because we have to deal with the complicated alternating effect generated by the power law nonlinearity $|u|^{2p}u$ in a combinatorial manner.
{To} this end, we introduce {\bf the power of $\ast^{[\cdot]}$}(see \autoref{lc}), {\bf the combinatorial alternating sums $\cc\ca\cs(\cdot)$}(see Definition \ref{defias}), and with these come a series of more complex operations, such as 
\eqref{oc}, \eqref{ck}, \eqref{ct}, \eqref{cc}, \eqref{dp}, \eqref{dp2}, \autoref{di}, {Lemmas} \ref{lee}-\ref{nl} and so on and so forth. Without these, {it seems that there is no bridge to arrive at the end of proof}; compare \cite{DLX22AR}.
\end{rema}

\subsection{Existence and Convergence}

Since the Picard sequence $\{c_k(t,n)\}$ is a Cauchy sequence, there is a limit function, denoted by $\tc(t,n)$, such that $\tc$ is a solution to \eqref{ck} and it satisfies the initial Fourier data $\tc(0,n)=c(n)$ for all $n\in\bZ^\nu$.

Define
\begin{align*}
\mathfrak u(t,x)&=\sum_{n\in\bZ^\nu}\tc(t,n)e^{{\rm i}\langle n\rangle x},\\
(\partial_x^\#\mathfrak u)(t,x)&=\sum_{n\in\bZ^\nu}({\rm i}\langle n\rangle)^{\#}\tc(t,n)e^{{\rm i}\langle n\rangle x},\quad\#=1,2,
\end{align*}
and
\begin{align*}
(\partial_t\mathfrak u)(t,x)&={\rm i}\sum_{n\in\bZ^\nu}\left\{({\rm i}\langle n\rangle)^2
\tc(t,n)+\sum_{\substack{n_j\in\bZ^\nu, j=1,\cdots, P\\\sum_{j=1}^{P}(-1)^{j-1}n_j=n}}\prod_{j=1}^{P}\{\tc(t,n_j)\}^{\ast^{[j-1]}}\right\}e^{{\rm i}\langle n\rangle x}.
\end{align*}
These {spatially quasi-periodic functions} are well-defined. In fact, it follows from the polynomial decay \eqref{pdd} and Lemma \ref{lemmriem} that
\begin{align*}
\sum_{n\in\bZ^\nu}|n|^2|\tc(t,n)|&\lesssim_{B}\sum_{n\in\bZ^\nu}|n|^2(1+|n|)^{-\mathtt r/2}\\
&\leq\sum_{n\in\bZ^\nu}(1+|n|)^{2-\frac{\mathtt r}{2}}\\
&\leq \mathfrak b\left(\frac{\mathtt r}{2}-2;\nu\right)
\end{align*}
and
\begin{align*}
\sum_{n\in\bZ^\nu}\sum_{\substack{n_j\in\bZ^\nu, j=1,\cdots, P\\\sum_{j=1}^{P}(-1)^{j-1}n_j=n}}\prod_{j=1}^{P}|
\tc(t,n_j)|&\lesssim_{B,P}\sum_{n\in\bZ^\nu}\sum_{\substack{n_j\in\bZ^\nu, j=1,\cdots, P\\\sum_{j=1}^{P}(-1)^{j-1}n_j=n}}\prod_{j=1}^{P}(1+|n_j|)^{-\mathtt r/2}\\
&\leq\sum_{n\in\bZ^\nu}(1+|n|)^{-\mathtt r/4}\sum_{n_j\in\bZ^\nu,j=1,\cdots,P}\prod_{j=1}^{P}(1+|n_j|)^{-\mathtt r/4}\\
&\leq\left\{\mathfrak b(\mathtt r/4;\nu)\right\}^{P+1},
\end{align*}
provided that
\begin{align}\label{rnu}
2\leq\nu<\min\left\{\frac{\mathtt r}{2}-2,\frac{\mathtt r}{4}\right\}=\frac{\mathtt r}{4}.
\end{align}
As a result, $\mathfrak u$ is a classical solution to the quasi-periodic Cauchy problem \eqref{cnls}-\eqref{id0}.

\subsection{Uniqueness}

In this subsection, we prove the uniqueness result, with arbitrary $p\in\bN$, in the case of a polynomial decay condition.

Let
\begin{align}
u^1(t,x)&=\sum_{n\in\bZ^\nu}c^1(t,n)e^{{\rm i}\langle n\rangle x} \quad\text{and}\quad u^2(t,x)=\sum_{n\in\bZ^\nu}c^2(t,n)e^{{\rm i}\langle n\rangle x}
\end{align}
be two solutions to \eqref{cnls} defined on $(t,x)\in[0,t_0]\times\bR$. Assume that
\begin{itemize}
\item (with the same initial data)~~$c^1(0,n)=c^2(0,n)=c(n)$ for all $n\in\bZ^\nu$;
  \item (polynomial decay condition)\begin{align*}
\max\{|c^1(t,n)|,|c^2(t,n)|\}\leq B(1+|n|)^{-\frac{\mathtt r}{2}},\quad n\in\bZ^\nu~\text{and}~0\leq t\leq t_0.
 \end{align*}
\end{itemize}

%

Our goal is to estimate the difference $|c^1(t,{n})-c^2(t,{n})|$.

In the Fourier space, $c^1$ and $c^2$ respectively satisfy the following integral equations,
 \begin{align*}
c^1(t,n)=e^{-{\rm i}\langle n\rangle^2t}c(n)+{\rm i}\int_0^te^{-{\rm i}\langle n\rangle^2(t-s)}\sum_{\substack{n_j\in\bZ^\nu,~~j=1,\cdots,P\\\sum_{j=1}^{P}(-1)^{j-1}n_j=n}}
\prod_{j=1}^{P}\left\{c^1(s,n_j)\right\}^{\ast^{[j-1]}}{\rm d}s,\\
c^2(t,n)=e^{-{\rm i}\langle n\rangle^2t}c(n)+{\rm i}\int_0^te^{-{\rm i}\langle n\rangle^2(t-s)}\sum_{\substack{n_j\in\bZ^\nu,~~j=1,\cdots,P\\\sum_{j=1}^{P}(-1)^{j-1}n_j=n}}
\prod_{j=1}^{P}\left\{c^2(s,n_j)\right\}^{\ast^{[j-1]}}{\rm d}s.
 \end{align*}


For all $1\leq k\in\bN$,  by induction and the polynomial decay condition, one can obtain that
\begin{align}
|c^1(t,n)-c^2(t,n)|&\label{uu1}\leq2B^{(P-1)k+1}P^k\cdot\frac{t^k}{k!}\sum_{\substack{n_\in\bZ^\nu,j=1,\cdots,(P-1)k+1\\\sum_{j=1}^{(P-1)k+1}(-1)^{j-1}n_j=n}}\prod_{j=1}^{(P-1)k+1}(1+|n_j|)^{-\frac{\mathtt r}{2}}\\
&\label{uu2}\leq2B\mathfrak b(\mathtt r/2;\nu)\cdot\frac{\left\{(B\mathfrak b(\mathtt r/2;\nu))^{P-1}Pt\right\}^k}{k!}.
\end{align}

The proof here is similar to the one in the proof of Cauchy sequence.

We first prove \eqref{uu1}. For $k=1$, it follows from decomposition \eqref{deco}-\eqref{deco2} and the polynomial decay property \eqref{pdc2} that
\begin{align*}
|c^1(t,n)-c^2(t,n)|
\leq&\sum_{J=1}^{P}\int_0^t\sum_{\substack{n_j\in\bZ^\nu,j=1,\cdots,P\\\sum_{j=1}^{P}(-1)^{j-1}n_j=n}}\prod_{j=1}^{J-1}|\{c^1(s,n_j)\}^{\ast^{j-1}}|\cdot\prod_{j=J+1}^{P}|\{c^2(s,n_j)\}^{\ast^{[j-1]}}|\\
&|\{c^1(s,n_j)\}^{\ast^{[j-1]}}-\{c^2(s,n_j)\}^{\ast^{[j-1]}}|{\rm d}s\\
\leq&\sum_{J=1}^{P}\int_0^t\sum_{\substack{n_j\in\bZ^\nu,j=1,\cdots,P\\\sum_{j=1}^{P}(-1)^{j-1}n_j=n}}\prod_{j=1,j\neq J}^{P}B(1+|n_j|)^{-\mathtt r/2}\cdot 2B(1+|n_J|)^{-\mathtt r/2}{\rm d}s\\
=&2B^{P}Pt\sum_{\substack{n_j\in\bZ^\nu,j=1,\cdots,P\\\sum_{j=1}^{P}(-1)^{j}n_j=n}}\prod_{j=1}^{P}(1+|n_j|)^{-\mathtt r/2}.
\end{align*}
This  shows that \eqref{uu1} is true for $k=1$.

Let $k\geq2$. Assume that \eqref{uu1} holds for all $1<k^{\prime}<k$.

For $k$, by the same analysis, one can derive that
\begin{align*}
&|c^1(t,n)-c^2(t,n)|\\
\leq&\sum_{J=1}^{P}\int_0^t\sum_{\substack{n_j\in\bZ^\nu,j=1,\cdots,P\\\sum_{j=1}^{P}(-1)^{j-1}n_j=n}}\prod_{j=1}^{J-1}|\{c^1(s,n_j)\}^{\ast^{j-1}}|\cdot\prod_{j=J+1}^{P}|\{c^2(s,n_j)\}^{\ast^{[j-1]}}|\\
&|\{c^1(s,n_j)\}^{\ast^{[j-1]}}-\{c^2(s,n_j)\}^{\ast^{[j-1]}}|{\rm d}s\\
\leq&\sum_{J=1}^{P}\int_0^t\sum_{\substack{n_j\in\bZ^\nu,j=1,\cdots,P\\\sum_{j=1}^{P}(-1)^{j-1}n_j=n}}\prod_{j=1,j\neq J}^{P}B(1+|n_j|)^{-\mathtt r/2}\cdot 2B^{(P-1)(k-1)+1}P^{k-1}\cdot\frac{s^{k-1}}{(k-1)!}\\
&\sum_{\substack{m_j\in\bZ^\nu,j=1,\cdots,(P-1)(k-1)+1\\\sum_{j=1}^{(P-1)(k-1)+1}(-1)^{j-1}m_j=n_J}}\prod_{j=1}^{(P-1)(k-1)+1}(1+|n_j|)^{-\frac{\mathtt r}{2}}{\rm d}s\\
\leq&2B^{(P-1)k+1}P^k\cdot\frac{t^k}{k!}\sum_{\substack{n_j\in\bZ^\nu,j=1,\cdots,(P-1)k+1\\\sum_{j=1}^{(P-1)k+1}(-1)^{j-1}n_j=n}}\prod_{j=1}^{(P-1)k+1}(1+|n_j|)^{-\mathtt r/2}.
\end{align*}
Here we use again the permutation \eqref{dp}-\eqref{dp2}. This {shows} that \eqref{uu1} is true for $k$.

By induction, we know that \eqref{uu1} holds for all $k\in\bN$.

Furthermore, by Lemma \ref{lemmriem}, one can derive that
\begin{align*}
|c^1(t,n)-c^2(t,n)|
\leq2B\mathfrak b(\mathtt r/2;\nu)\cdot\frac{\left\{\left(B\mathfrak b(\mathtt r/2;\nu)\right)^{P-1}Pt\right\}^k}{k!}.
\end{align*}
This completes the proof of \eqref{uu2}.

As a result, $|c^1(t,n)-c^2(t,n)|$ tends to zero uniformly in $(t,n)\in[0,t_0]\times\bZ^\nu$ by letting $k\rightarrow\infty$ provided that $2\leq\nu<\mathtt r/2$, this is guaranteed by \eqref{rnu}. This shows that $u^1(t,x)\equiv u^2(t,x)$ for all $(t,x)\in[0,t_0]\times\bR$.

\subsection{Asymptotic Dynamics}
In this subsection, we prove that, for the weakly NLS equation \eqref{ecnls}, within the given time scale, the nonlinear solution will be asymptotic to the associated linear solution in the sense of both the sup-norm $\|\cdot\|_{L_x^\infty(\mathbb R)}$ and the Sobolev-norm $\|\cdot\|_{H^s_x(\mathbb R)}$.

Clearly the linear solution is given by the following Fourier series
\begin{align}\label{linears}
u_{\text{linear}}(t,x)=\sum_{n\in\mathbb Z^\nu}e^{-{\rm i}\langle n\rangle^2t}c(n)e^{{\rm i}\langle n\rangle x}.
\end{align}

For the asymptotic dynamics in the sense of the sup-norm $\|\cdot\|_{L_x^\infty(\mathbb R)}$, it follows from \eqref{ses}, \eqref{ie}, \eqref{linears}, the definition of the $L_x^\infty(\mathbb R)$-norm, and the uniform-in-time decay of the Fourier coefficients that
\begin{align*}
\|(u-u_{\text{linear}})(t)\|_{L_x^\infty(\mathbb R)}&\leq|\epsilon|\sum_{n\in\mathbb Z^\nu}\int_0^t\sum_{\substack{n_j\in\bZ^\nu,~~j=1,\cdots,P\\\sum_{j=1}^{P}(-1)^{j-1}n_j=n}}
\prod_{j=1}^{P}|c(\tau,n_j)|{\rm d}\tau\\
&\leq|\epsilon|t\sum_{n_j\in\mathbb Z^\nu, j=1,\cdots, P}\prod_{j=1}^{P}(1+|n_j|)^{-\frac{\mathtt r}{2}}\\
&\lesssim |\epsilon|^{\eta},
\end{align*}
where $t=\epsilon^{-1+\eta}$ with $0<\eta\ll1$. This implies that
\[\|(u-u_{\text{linear}})(t)\|_{L_x^\infty(\mathbb R)}\rightarrow0,\quad{\text{as}}~\epsilon\rightarrow0.\]

For the asymptotic dynamics in the sense of the Sobolev-norm $\|\cdot\|_{H^s_x(\mathbb R)}$, it follows from \eqref{ses}, \eqref{ie}, \eqref{linears}, the definition of the $H_x^s(\mathbb R)$-norm, and the uniform-in-time decay of the Fourier coefficients that
\begin{align*}
\|(u-u_{\text{linear}})(t)\|_{H_x^s(\mathbb R)}^2
&=\sum_{m=0}^{s}\|\langle n\rangle^m{(\widehat u-\widehat{u_{\text{linear}}})(t)}\|^2_{\ell_n^2(\mathbb Z^\nu)}\\
&=\sum_{m=0}^s\sum_{n\in\mathbb Z^\nu}\langle n\rangle^{2m}|{\rm i}\epsilon\int_{0}^{t}e^{-{\rm i}\langle n\rangle^2(t-\tau)}\sum_{\substack{n_j\in\bZ^\nu,~~j=1,\cdots,P\\\sum_{j=1}^{P}(-1)^{j-1}n_j=n}}
\prod_{j=1}^{P}\{c(\tau,n_j)\}^{\ast^{[j-1]}}{\rm d}\tau|^2\\
&\leq\epsilon^2\sum_{m=0}^s|\omega|^{2m}\sum_{n\in\mathbb Z^\nu}|n|^{2m}\left\{\int_0^t\sum_{\substack{n_j\in\bZ^\nu,~~j=1,\cdots,P\\\sum_{j=1}^{P}(-1)^{j-1}n_j=n}}
\prod_{j=1}^{P}|c(\tau,n_j)|{\rm d}\tau\right\}^2\\
&\lesssim(\epsilon t)^2\sum_{m=0}^s|\omega|^{2m}\sum_{n\in\mathbb Z^\nu}|n|^{2m}\left\{\sum_{\substack{n_j\in\bZ^\nu,~~j=1,\cdots,P\\\sum_{j=1}^{P}(-1)^{j-1}n_j=n}}
\prod_{j=1}^{P}(1+|n_j|)^{-\mathtt r/2}\right\}^2\\
&\leq(\epsilon t)^2\sum_{m=0}^s|\omega|^{2m}\left\{
\prod_{j=1}^{P}
\underbrace{\sum_{\substack{n_j\in\bZ^\nu}}(1+|n_j|)^{-\mathtt r/4}}_{\mathscr H(\mathtt r/4;\nu)}
\right\}^2
\underbrace{\sum_{n\in\mathbb Z^\nu}(1+|n|)^{2m-\frac{\mathtt r}{2}}}_{\mathscr H(\mathtt r/2-2m;\nu)}\\
&\lesssim (\epsilon t)^2\sum_{m=0}^{s}|\omega|^{2m}\mathfrak b(\mathtt r/2-2m;\nu)\\
&\lesssim|\epsilon|^{2\eta},
\end{align*}
provided that $\mathtt r/2-2m>\nu$ for all $m=0,1,\cdots,s$, that is, $s<\mathtt r/4-\nu/2$. This shows that
\[\|(u-u_{\text{linear}})(t)\|_{H_x^s(\mathbb R)}\rightarrow0,\quad\text{as}~\epsilon\rightarrow 0.\]
This completes the proof of Theorem \ref{cnlsth}.

\subsection{Corollary and Remark}

This subsection gives a corollary and a remark.

\begin{coro}
If the initial Fourier data is exponentially decaying, that is, there exist $A>0$ and $0<\kappa\leq1$ such that $|c(n)|\leq A^{1/2p}e^{-\kappa|n|}$ for all $n\in\bZ^\nu$, then the quasi-periodic Cauchy problem has a unique solution which is local in time and retains the same spatial quasi-periodicity. Furthermore, the solution has a uniform (in time) $\kappa/2$-decay rate, and hence it is in the classical sense. In addition, it is analytic in space variable by Lemma \ref{expp}.

\end{coro}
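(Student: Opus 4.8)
The plan is to re-run the entire argument behind Theorem~\ref{cnlsth}, with the polynomial decay condition \eqref{pd} replaced throughout by the exponential hypothesis $|c(n)|\le A^{1/2p}e^{-\kappa|n|}$. The reduction of \eqref{cnls}--\eqref{id0} to the infinite coupled ODE system \eqref{ode}, its integral form \eqref{ie}, the Picard iteration \eqref{ck}, and its combinatorial-tree representation \eqref{ct} (Lemma~\ref{lee}) are all insensitive to the form of the decay; so are the bounds on $\cI$ (Lemma~\ref{li}) and on $\cF$ (Lemma~\ref{fl}), together with all the branch-counting identities of \autoref{scc} (Propositions~\ref{propsl}--\ref{lemas}). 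The only input that changes is the estimate on $\cC$: by Lemma~\ref{ca} and the exponential hypothesis, for all $k\ge1$, $\gamma^{(k)}\in\Gamma^{(k)}$ and $n^{(k)}=(m_j)_{1\le j\le 2p\sigma(\gamma^{(k)})}$ one obtains the analogue of Lemma~\ref{cl},
\[
\bigl|\cC^{(k,\gamma^{(k)})}(n^{(k)})\bigr|\le A^{\sigma(\gamma^{(k)})}\prod_{j=1}^{2p\sigma(\gamma^{(k)})}e^{-\kappa|m_j|}.
\]

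Next, the propagation-of-decay step replacing Lemma~\ref{pl}. The device is the splitting $e^{-\kappa|m|}=e^{-(\kappa/2)|m|}\cdot e^{-(\kappa/2)|m|}$ on each of the $2p\sigma(\gamma^{(k)})$ factors. On the constrained set $\cc\ca\cs(n^{(k)})=n$, the $\ell^1$ triangle inequality \eqref{ine2} gives $\sum_j|m_j|=\sum_j|(-1)^{j-1}m_j|\ge\bigl|\sum_j(-1)^{j-1}m_j\bigr|=|\cc\ca\cs(n^{(k)})|=|n|$, so the first half of the factors contributes $\prod_j e^{-(\kappa/2)|m_j|}\le e^{-(\kappa/2)|n|}$, while the second half is summed unconstrained over $(\bZ^\nu)^{2p\sigma(\gamma^{(k)})}$ and, since $|\cdot|$ is the $\ell^1$-norm, contributes the \emph{finite} factor $\bigl(\sum_{m\in\bZ^\nu}e^{-(\kappa/2)|m|}\bigr)^{2p\sigma(\gamma^{(k)})}=\bigl(\coth(\kappa/4)\bigr)^{\nu\cdot 2p\sigma(\gamma^{(k)})}$ --- with no restriction on $\nu$, in contrast to the polynomial case where $\mathtt r>4\nu$ was forced. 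Feeding this, together with Lemmas~\ref{li} and \ref{fl}, Proposition~\ref{propsl} and the series estimate Lemma~\ref{gal}, into the computation in the proof of Lemma~\ref{pl} --- with $\mathfrak b(\mathtt r/2;\nu)$ replaced throughout by $(\coth(\kappa/4))^\nu$ --- yields a threshold $t_0'=A^{-1}(\coth(\kappa/4))^{-2p\nu}(2p)^{2p}P^{-P}$ (the analogue of \eqref{t0}) such that $|c_k(t,n)|\le B'e^{-(\kappa/2)|n|}$ uniformly in $k$ on $(0,t_0']\times\bZ^\nu$, with $B'=A^{1/2p}\tfrac{P}{2p}(\coth(\kappa/4))^\nu$.

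The Cauchy-sequence estimates (Lemmas~\ref{nl}--\ref{sdf}) and the uniqueness argument transcribe in the same fashion, now splitting $e^{-(\kappa/2)|m|}=e^{-(\kappa/4)|m|}\cdot e^{-(\kappa/4)|m|}$ and using the same decomposition--permutation \eqref{dp}--\eqref{dp2}: one keeps a factor $e^{-(\kappa/4)|n|}$ via \eqref{ine2} and sums the rest, obtaining $|c_k(t,n)-c_{k-1}(t,n)|\le C\,\tfrac{(Ct)^k}{k!}\,e^{-(\kappa/4)|n|}$, so that $\{c_k(t,n)\}_{k\in\bN}$ is Cauchy uniformly on $(0,t_0']\times\bZ^\nu$ with limit $\tc(t,n)$ satisfying $|\tc(t,n)|\lesssim e^{-(\kappa/2)|n|}$. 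Since $e^{-(\kappa/2)|n|}$ dominates $(1+|n|)^{-s}$ for every $s>0$, the series defining $\mathfrak u$, $\partial_x\mathfrak u$, $\partial_x^2\mathfrak u$ and $\partial_t\mathfrak u$ converge absolutely and uniformly with no constraint on $\nu$, so $\mathfrak u$ is a classical solution of \eqref{cnls}--\eqref{id0} on $[0,t_0']\times\bR$; the uniqueness induction of the polynomial case applies verbatim to show that two $\kappa/2$-exponentially decaying quasi-periodic solutions with the same data coincide on $[0,t_0']$; and the uniform-in-time bound $|\tc(t,n)|\lesssim e^{-(\kappa/2)|n|}$ makes $\mathfrak u(t,\cdot)$ extend holomorphically to a complex strip, which is precisely the content of Lemma~\ref{expp}.

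I do not expect a genuinely new analytic obstacle here: the exponential class is closed under the ``one half for decay, one half to sum off'' mechanism even more cleanly than the polynomial class (the factorization $e^{-\kappa\sum_j|m_j|}=\prod_j e^{-\kappa|m_j|}$ is an equality, not an inequality), and it removes the dimensional constraint $2\le\nu<\mathtt r/4$ that was essential in Theorem~\ref{cnlsth}. The point needing care is bookkeeping: the half-and-half split of the $2p\sigma(\gamma^{(k)})$ exponential factors must be performed uniformly over all branches $\gamma^{(k)}\in\Gamma^{(k)}$ and all levels $k$, so that the summed-off halves produce exactly the factor $(\coth(\kappa/4))^{2p\nu\sigma(\gamma^{(k)})}$ to be reabsorbed --- via $\sigma=\ell+\tfrac1{2p}$ (Proposition~\ref{propsl}) and $\dim_{\bZ^\nu}\cN^{(k,\gamma^{(k)})}=2p\sigma(\gamma^{(k)})$ (Proposition~\ref{2ps}) --- into the convergent series $\sum_{\gamma^{(k)}\in\Gamma^{(k)}}\bigl(A(\coth(\kappa/4))^{2p\nu}t\bigr)^{\ell(\gamma^{(k)})}/\cD(\gamma^{(k)})$ controlled by Lemma~\ref{gal}. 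Alternatively, one may shortcut part of this: since $e^{-\kappa|n|}\le C_{\mathtt r,\kappa}(1+|n|)^{-\mathtt r}$ for every $\mathtt r$, the data is in particular $\mathtt r$-polynomially decaying for some fixed $\mathtt r>4\nu$, so Theorem~\ref{cnlsth} already supplies local existence, classicality, uniqueness and the time-horizon statement on a fixed interval, and only the upgrade of the solution's uniform decay from polynomial to exponential --- hence the analyticity via Lemma~\ref{expp} --- requires the direct re-run described above.
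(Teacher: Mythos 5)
Your proposal is correct and follows essentially the route the paper intends for this corollary: the paper offers no separate proof, since the statement is meant to follow by re-running the machinery of Theorem~\ref{cnlsth} with the polynomial weight replaced by the exponential one, splitting $e^{-\kappa|m_j|}$ into two halves, extracting $e^{-(\kappa/2)|n|}$ from the constraint $\cc\ca\cs(n^{(k)})=n$ via \eqref{ine2}, and replacing the zeta-type bound $\mathfrak b(\cdot;\nu)$ by the convergent geometric sum $(\coth(\kappa/4))^{\nu}$, exactly as you describe, with analyticity then supplied by Lemma~\ref{expp}. Your closing observation that existence, uniqueness and classicality can alternatively be borrowed from the polynomial theorem (since exponential decay implies $\mathtt r$-polynomial decay for every $\mathtt r$), leaving only the upgrade of the solution's decay to exponential, is a sound and economical remark consistent with the paper's framework.
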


\begin{rema}
From the proof for the quasi-periodic Cauchy problem \eqref{cnls}-\eqref{id0}, one can see that the following nonlinear Schr\"odinger
equation\footnote{This type of nonlinear Schr\"odinger equation doesn't enjoy the so-called {\bf gauge invariance} compared with \eqref{cnls}.}
\[{\rm i}\partial_tu+\partial_{xx}u+\lambda|u|^{2p}=0,\quad 1\leq p\in\bN~\text{and}~\lambda=\pm1,\]
with quasi-periodic initial data \eqref{id0} which satisfies the polynomial decay condition \eqref{pd}, {repalcing $A^{\frac{1}{2p}}$ by $A^{\frac{1}{2p-1}}$}, has a unique locally in time spatially quasi-periodic solution with the same frequency vector as the initial data in the classical sense. What's more, this result is true for the exponential decay case and the obtained solution is analytic in the space variable; see Lemma \ref{expp}.

\end{rema}

\section{Appendix}
\begin{lemm}(Geometric-arithmetic mean inequality)\label{lemmga}
Let $\mathtt a_1,\cdots,\mathtt a_n$ be positive numbers, where $n\in\bN$. Then we have the following mean value inequality
\begin{align}\label{mie}
  \left(\prod_{j=1}^n\mathtt a_j\right)^{1/n}\leq\frac{1}{n}\sum_{j=1}^n\mathtt a_j.
  \end{align}
  The left and the right are called the geometric mean and the arithmetic mean of $\mathtt a_1,\cdots,\mathtt a_n$ respectively.
\end{lemm}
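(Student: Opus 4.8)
The plan is to prove \eqref{mie} by Cauchy's forward-backward induction on $n$, a classical argument that requires nothing beyond elementary algebra and so keeps the appendix self-contained. First I would dispose of the base case $n=2$: since $\left(\sqrt{\mathtt a_1}-\sqrt{\mathtt a_2}\right)^2\geq 0$, expanding gives $\mathtt a_1+\mathtt a_2\geq 2\sqrt{\mathtt a_1\mathtt a_2}$, which is precisely \eqref{mie} for $n=2$; the case $n=1$ is trivial since both sides equal $\mathtt a_1$.

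Next comes the forward (doubling) step: assuming \eqref{mie} holds for some $n$, I would show it holds for $2n$. Split $\mathtt a_1,\cdots,\mathtt a_{2n}$ into two blocks of size $n$ and let $G_1,G_2$ denote the geometric means of the two blocks. Applying the $n$-case to each block bounds each block's arithmetic mean below by $G_1$ respectively $G_2$, and then the $n=2$ case applied to $G_1,G_2$ gives $\frac{1}{2n}\sum_{j=1}^{2n}\mathtt a_j\geq\frac12\left(G_1+G_2\right)\geq\sqrt{G_1G_2}=\left(\prod_{j=1}^{2n}\mathtt a_j\right)^{1/(2n)}$. This already establishes \eqref{mie} for every $n$ that is a power of $2$.

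Then the backward (descent) step fills the gaps: assuming \eqref{mie} holds for $n$, I would deduce it for $n-1$. Given positive $\mathtt a_1,\cdots,\mathtt a_{n-1}$, set $M:=\frac{1}{n-1}\sum_{j=1}^{n-1}\mathtt a_j$ and apply the $n$-case to the tuple $\mathtt a_1,\cdots,\mathtt a_{n-1},M$. The left-hand side collapses, $\frac1n\left(\sum_{j=1}^{n-1}\mathtt a_j+M\right)=\frac1n\left((n-1)M+M\right)=M$, so $M\geq\left(M\prod_{j=1}^{n-1}\mathtt a_j\right)^{1/n}$; raising to the $n$-th power and cancelling one factor of $M$ (legitimate since $M>0$) gives $M^{n-1}\geq\prod_{j=1}^{n-1}\mathtt a_j$, which is \eqref{mie} for $n-1$. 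Since every natural number lies below some power of $2$, the forward and backward steps together yield \eqref{mie} for all $n\in\bN$.

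There is no real obstacle here; the only point needing a little care is the bookkeeping in the descent step, namely that substituting the arithmetic mean $M$ for the extra variable makes the $n$-term arithmetic mean collapse back to $M$, which is exactly what permits the cancellation and the descent from $n$ to $n-1$. An alternative and even shorter route would invoke concavity of $\log$ on $(0,\infty)$ (its second derivative $-x^{-2}$ is negative) together with Jensen's inequality, $\log\!\big(\frac1n\sum\mathtt a_j\big)\geq\frac1n\sum\log\mathtt a_j=\log\big(\prod\mathtt a_j\big)^{1/n}$, followed by exponentiation; I would mention this as a remark but carry out the Cauchy induction to avoid importing convexity machinery into the appendix.
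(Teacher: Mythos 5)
Your proof is correct: the base case $n=2$, the doubling step via block geometric means, and the backward descent obtained by appending the arithmetic mean $M$ and cancelling one factor of $M$ are all carried out properly, and together they cover every $n\in\bN$. Note that the paper itself states this classical geometric--arithmetic mean inequality in the appendix without any proof, so there is no argument to compare against; your Cauchy forward--backward induction (or the Jensen/concavity-of-$\log$ remark you mention) is a perfectly adequate self-contained justification.
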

%
%
%

\begin{lemm}(Bound on the Riemann zeta function)\label{lemmriem}
Consider the Riemann zeta function
  \begin{align}\label{zeta}
  \zeta(\mathtt s)=\sum_{n=1}^{\infty}\frac{1}{n^{\mathtt s}},\quad\mathtt s\in\bC.
  \end{align}
  If $\mathtt s\in\bR$ and $\mathtt s>1$, then we have the following upper bound estimate
  \begin{align}\label{br}
  \zeta(\mathtt s)\leq1+\frac{1}{\mathtt s-1}.
  \end{align}

  Furthermore,
  for $1<\mathtt s\in\bR$ and $1\leq\nu\in\bN$, where $\mathtt s>\nu$, set
  \begin{align}\label{mm}
  \mathscr H(\mathtt s;\nu)=\sum_{n\in\mathbb Z^\nu}\frac{1}{(1+|n|)^{\mathtt s}}
\end{align}
and
\begin{align}
\mathfrak b(\mathtt s;\nu)=1+\sum_{j_0=1}^{\nu}\left(\begin{matrix}\nu\\j_0\end{matrix}\right)2^{j_0}j_0^{-\mathtt s}\left\{\zeta\left(\frac{\mathtt s}{j_0}\right)\right\}^{j_0}.
  \end{align}
Then we have
\begin{align}\label{hb}
  \mathscr H(\mathtt s;\nu)\leq\mathfrak b(s;\nu),\quad1<\mathtt s\in\bR~\text{and}~1\leq\nu\in\bN.
  \end{align}
\end{lemm}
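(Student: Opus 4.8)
The plan is to establish the two displayed bounds separately, both by elementary comparison arguments; the second one additionally uses the geometric--arithmetic mean inequality from Lemma \ref{lemmga}.

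For \eqref{br}, I would isolate the $n=1$ term of the series \eqref{zeta} and control the remaining tail by an integral: since $x\mapsto x^{-\mathtt s}$ is positive and decreasing on $[1,\infty)$ when $\mathtt s>1$, one has $n^{-\mathtt s}\le\int_{n-1}^{n}x^{-\mathtt s}\,\mathrm dx$ for every $n\ge 2$, so that
\[
\zeta(\mathtt s)=1+\sum_{n=2}^{\infty}n^{-\mathtt s}\le 1+\int_{1}^{\infty}x^{-\mathtt s}\,\mathrm dx=1+\frac{1}{\mathtt s-1}.
\]

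For \eqref{hb}, the key step is to reorganize the $\nu$-dimensional lattice sum $\mathscr H(\mathtt s;\nu)$ according to the number $j_0$ of nonzero coordinates of $n\in\bZ^\nu$. Grouping the lattice points by their support $S=\{j:n_j\ne 0\}$, setting $m_j=|n_j|\ge 1$ for $j\in S$, and accounting for the $\binom{\nu}{j_0}$ choices of a support of size $j_0$ and the $2^{j_0}$ sign choices, I would obtain
\[
\mathscr H(\mathtt s;\nu)=1+\sum_{j_0=1}^{\nu}\binom{\nu}{j_0}2^{j_0}\sum_{m_1,\dots,m_{j_0}\ge 1}\frac{1}{(1+m_1+\cdots+m_{j_0})^{\mathtt s}},
\]
where the leading $1$ is the contribution of $n=0$. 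On the region where every $m_i\ge 1$, Lemma \ref{lemmga} gives $1+m_1+\cdots+m_{j_0}\ge m_1+\cdots+m_{j_0}\ge j_0\,(m_1\cdots m_{j_0})^{1/j_0}$, hence
\[
\frac{1}{(1+m_1+\cdots+m_{j_0})^{\mathtt s}}\le j_0^{-\mathtt s}\prod_{i=1}^{j_0}m_i^{-\mathtt s/j_0}.
\]
Summing the right-hand side factorwise over $m_1,\dots,m_{j_0}\ge 1$ and recalling \eqref{zeta} yields $\sum_{m_1,\dots,m_{j_0}\ge1}(\cdots)\le j_0^{-\mathtt s}\{\zeta(\mathtt s/j_0)\}^{j_0}$; this is legitimate because $\mathtt s/j_0>1$ for every $1\le j_0\le\nu$, which is exactly guaranteed by the hypothesis $\mathtt s>\nu$. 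Substituting back gives precisely $\mathscr H(\mathtt s;\nu)\le\mathfrak b(\mathtt s;\nu)$.

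Neither step presents a genuine obstacle. The only points that need care are (i) checking that the partition of $\bZ^\nu$ by the support of $n$ is exhaustive and disjoint, so that the combinatorial weights $\binom{\nu}{j_0}2^{j_0}$ are exactly right, and (ii) verifying the convergence threshold $\mathtt s>j_0$ for all $j_0$ up to $\nu$, which follows from $\mathtt s>\nu$. If a cleaner exposition is desired, one can first record the one-variable identity $\sum_{m\ge1}m^{-\mathtt t}=\zeta(\mathtt t)$ for $\mathtt t>1$ and then invoke it $j_0$ times inside the display above.
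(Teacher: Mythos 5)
Your proof is correct: the integral comparison gives \eqref{br}, and the decomposition of $\mathbb Z^\nu$ by the support of $n$ (with weights $\binom{\nu}{j_0}2^{j_0}$) combined with the geometric--arithmetic mean inequality yields \eqref{hb}, with the hypothesis $\mathtt s>\nu$ invoked exactly where it is needed, namely to ensure $\mathtt s/j_0>1$ so that each factor $\zeta(\mathtt s/j_0)$ is finite. The paper states this lemma in the appendix without proof, but the placement of Lemma \ref{lemmga} immediately before it points to precisely the argument you give, so your route coincides with the intended one.
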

\begin{lemm}(Generalized Bernoulli inequality)\label{lemmgb}
Let $x_1,\cdots,x_m$ be real numbers, all greater than $-1$, and all with the same sign\footnote{If $x_1,\cdots,x_m$ don't have the same sign, then the generalized Bernoulli inequality is not true. For example, let $n=2, x_1=1/2$ and $x_2=-1/2$, then $(1+x_1)(1+x_2)=3/4$ and $1+x_1+x_2=1$. In this case, the generalized Bernoulli inequality is exactly opposite. Hence the condition of the same sign for $x_1,\cdots,x_n$ is necessary.}. Then we have the following generalized Bernoulli inequality
  \begin{align}\label{gbi}
  \prod_{j=1}^{m}(1+x_j)\geq1+\sum_{j=1}^{m}x_j.
  \end{align}
\end{lemm}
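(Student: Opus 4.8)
The plan is to argue by induction on the number $m$ of factors. The base case $m=1$ is the trivial identity $1+x_1=1+x_1$, so I would suppose $m\geq 1$ and that \eqref{gbi} holds for every collection of $m$ same-signed reals exceeding $-1$, and then pass to $m+1$ such numbers $x_1,\dots,x_{m+1}$. Writing $\prod_{j=1}^{m+1}(1+x_j)=(1+x_{m+1})\prod_{j=1}^{m}(1+x_j)$ and noting that each factor $1+x_j$ is strictly positive (so the whole product is positive), the task reduces to comparing this product with $1+\sum_{j=1}^{m+1}x_j$.

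First I would dispose of the degenerate case in which the partial sum satisfies $1+\sum_{j=1}^{m}x_j<0$. This forces all $x_j$ to be negative (were they all $\geq 0$, the partial sum would be $\geq 1$), hence $\sum_{j=1}^{m+1}x_j\leq\sum_{j=1}^{m}x_j<-1$, so that $1+\sum_{j=1}^{m+1}x_j<0$ while $\prod_{j=1}^{m+1}(1+x_j)>0$; the inequality is then immediate.

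In the remaining case $1+\sum_{j=1}^{m}x_j\geq 0$, I would apply the induction hypothesis $\prod_{j=1}^{m}(1+x_j)\geq 1+\sum_{j=1}^{m}x_j$ and multiply both sides by the nonnegative quantity $1+x_{m+1}$, obtaining
\[
\prod_{j=1}^{m+1}(1+x_j)\ \geq\ \Bigl(1+\sum_{j=1}^{m}x_j\Bigr)(1+x_{m+1})
= 1+\sum_{j=1}^{m+1}x_j + x_{m+1}\sum_{j=1}^{m}x_j.
\]
Since $x_{m+1}$ and $x_1,\dots,x_m$ all carry the same sign, the cross term $x_{m+1}\sum_{j=1}^{m}x_j$ is nonnegative, and discarding it yields \eqref{gbi} for $m+1$ factors. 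By induction the inequality holds for all $m\in\bN$.

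There is no deep obstacle here; the only point requiring a little care — and the reason the naive ``multiply the two inequalities'' step does not by itself close the argument — is that the partial sum $1+\sum_{j=1}^m x_j$ need not be nonnegative when the $x_j$ are negative, which is precisely why the short case split above is inserted. (The necessity of the same-sign hypothesis itself is already illustrated by the counterexample in the footnote to the statement.)
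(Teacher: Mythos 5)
The paper states this lemma in its appendix without proof (it is a classical inequality), so there is no in-paper argument to compare against; judged on its own, your induction is correct and complete. One remark: the case split on the sign of $1+\sum_{j=1}^{m}x_j$ is not actually needed. In the inductive step you multiply the single inequality $\prod_{j=1}^{m}(1+x_j)\geq 1+\sum_{j=1}^{m}x_j$ by the strictly positive number $1+x_{m+1}$ (positive because $x_{m+1}>-1$), and multiplying an inequality by a fixed positive scalar preserves it regardless of the signs of the two sides; moreover the cross term $x_{m+1}\sum_{j=1}^{m}x_j$ is nonnegative by the same-sign hypothesis whether the $x_j$ are all nonnegative or all nonpositive. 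So the ``degenerate'' branch is subsumed by your main computation --- harmless to include, but the concern motivating it (the failure of multiplying two inequalities) does not arise here, since you only ever multiply one inequality by a positive constant.
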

%
%
%


\begin{lemm}\label{gal}
If $0<\lozenge\leq\frac{(2p)^{2p}}{P^{P}}$, then for all $k\geq1$, we have
\begin{align}\label{la}
\sum_{\gamma^{(k)}\in\Gamma^{(k)}}\frac{\lozenge^{\ell(\gamma^{(k)})}}{\cD(\gamma^{(k)})}\leq\frac{P}{2p}.
\end{align}
\end{lemm}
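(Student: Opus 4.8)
The plan is to run an induction on $k$ for the quantity $S_k(\lozenge):=\sum_{\gamma^{(k)}\in\Gamma^{(k)}}\lozenge^{\ell(\gamma^{(k)})}/\cD(\gamma^{(k)})$, after first extracting a self-referential inequality for $S_k$ from the recursive definitions \eqref{ee} of $\ell$ and \eqref{pi} of $\cD$ and the branching rule \eqref{g}. It is convenient to record at the outset that $P=2p+1$, so the target bound satisfies $P/(2p)=1+\tfrac1{2p}$, together with the elementary inequality
\[
\frac{(2p)^{2p}}{P^{P}}=\frac1P\Bigl(\tfrac{2p}{2p+1}\Bigr)^{2p}<\frac1P<\frac1{2p},
\]
which I will invoke twice.

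First I would dispose of the base case $k=1$: here $\Gamma^{(1)}=\{0,1\}$ with $\ell(0)=0$, $\ell(1)=1$, $\cD(0)=\cD(1)=1$, so $S_1(\lozenge)=1+\lozenge$, and the hypothesis $0<\lozenge\le(2p)^{2p}/P^{P}$ together with the displayed inequality gives $S_1(\lozenge)\le 1+\tfrac1{2p}=P/(2p)$.

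Next, for $k\ge2$ I would split $\Gamma^{(k)}=\{0\}\cup(\Gamma^{(k-1)})^{P}$ (a disjoint union, since the scalar $0$ is not a $P$-tuple). The branch $\gamma^{(k)}=0$ contributes exactly $1$. For a branch $\gamma^{(k)}=(\gamma_j^{(k-1)})_{1\le j\le P}$, the recursions \eqref{ee} and \eqref{pi} yield
\[
\frac{\lozenge^{\ell(\gamma^{(k)})}}{\cD(\gamma^{(k)})}=\frac{\lozenge}{\ell(\gamma^{(k)})}\prod_{j=1}^{P}\frac{\lozenge^{\ell(\gamma_j^{(k-1)})}}{\cD(\gamma_j^{(k-1)})},
\]
and since $\ell(\gamma^{(k)})=1+\sum_j\ell(\gamma_j^{(k-1)})\ge1$ and $\lozenge>0$, the prefactor is $\le\lozenge$. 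Summing over all $P$-tuples and factoring the product into $P$ independent copies of the sum over $\Gamma^{(k-1)}$ gives the recursion $S_k(\lozenge)\le 1+\lozenge\,\bigl(S_{k-1}(\lozenge)\bigr)^{P}$. Feeding in the induction hypothesis $S_{k-1}(\lozenge)\le P/(2p)$ (and using that $x\mapsto x^{P}$ is increasing on $[0,\infty)$) together with the constraint on $\lozenge$,
\begin{align*}
S_k(\lozenge)&\le 1+\lozenge\Bigl(\tfrac{P}{2p}\Bigr)^{P}=1+\lozenge\,\frac{P^{P}}{(2p)^{2p+1}}\\
&\le 1+\frac{(2p)^{2p}}{P^{P}}\cdot\frac{P^{P}}{(2p)^{2p+1}}=1+\frac1{2p}=\frac{P}{2p},
\end{align*}
which closes the induction and establishes \eqref{la}.

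The one genuinely load-bearing observation is that after discarding $1/\ell(\gamma^{(k)})\le1$, each level of the recursion contributes exactly one power of $\lozenge$; this is precisely what makes the exponent $P$ in $(P/(2p))^{P}$ line up with the hypothesis $\lozenge P^{P}\le(2p)^{2p}$, so the constant $(2p)^{2p}/P^{P}$ is exactly the threshold the argument needs and nothing weaker would survive the induction. Everything else — the base-case arithmetic, the splitting of $\Gamma^{(k)}$, and the factorization $\sum_{(\gamma_j)}\prod_j=(\sum_\gamma)^{P}$ — is routine.
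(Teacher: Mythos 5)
Your proof is correct and follows essentially the same route as the paper: split $\Gamma^{(k)}=\{0\}\cup(\Gamma^{(k-1)})^{P}$, bound the factor $\lozenge/\ell(\gamma^{(k)})\le\lozenge$ using the recursions \eqref{ee} and \eqref{pi}, factor the sum over $P$-tuples into $\bigl(S_{k-1}(\lozenge)\bigr)^{P}$, and close the induction with $\lozenge\le(2p)^{2p}/P^{P}$ and $P=2p+1$. The arithmetic in both the base case and the inductive step matches the paper's.
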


\begin{proof}
By the definition of the branch set $\Gamma^{(k)}$, we first have the following decomposition,
\begin{align*}
\sum_{\gamma^{(k)}\in\Gamma^{(k)}}\frac{\lozenge^{\ell(\gamma^{(k)})}}{\cD(\gamma^{(k)})}=&\sum_{\gamma^{(k)}=0\in\Gamma^{(k)}}\frac{\lozenge^{\ell(\gamma^{(k)})}}{\cD(\gamma^{(k)})}+\sum_{\gamma^{(k)}\in\Gamma^{(k)}\backslash\{0\}}\frac{\lozenge^{\ell(\gamma^{(k)})}}{\cD(\gamma^{(k)})}\\
\triangleq&(I)_k+(II)_k.
\end{align*}

For all $k\geq1$, $0=\gamma^{(k)}\in\Gamma^{(k)}, \ell(0)=0,\cD(0)=1$, it is obvious that
\begin{align*}
(I)_k=\frac{\lozenge^{\ell(0)}}{\cD(0)}=\frac{\lozenge^{0}}{1}=1.
\end{align*}

For $k=1,1=\gamma^{(1)}\in\Gamma^{(1)},\sigma(1)=1,\cD(1)=1$,  we have
\begin{align*}
(II)_1=\frac{\lozenge^{\ell(1)}}{\cD(1)}=\frac{\lozenge^{1}}{1}=\lozenge.
\end{align*}
Hence
\begin{align*}
\text{the left-hand side of}~~\eqref{la}~~\text{for}~~k=1~~\text{is}~~&1+\lozenge
\leq1+\frac{(2p)^{2p}}{P^{P}}
\leq\frac{P}{2p}.
\end{align*}
This shows that \eqref{la} holds for $k=1$.

Let $k\geq2$. Assume that \eqref{la} is true for all $1<k^\prime<k$. For $k$, it follows from the definitions of $\ell$ and $\cD$ that
\begin{align*}
(II)_k=&\sum_{\gamma^{(k)}\in\Gamma^{(k)}\backslash\{0\}}\frac{\lozenge^{\ell(\gamma^{(k)})}}{\cD(\gamma^{(k)})}\\
=&\sum_{(\gamma_j^{(k-1)})_{1\leq j\leq P}\in(\Gamma^{(k-1)})^{P}}\frac{\lozenge^{1+\sum_{j=1}^{P}\ell(\gamma_j^{(k-1)})}}{\left(1+\sum_{j=1}^{P}\ell(\gamma_j^{(k-1)})\right)\prod_{j=1}^{P}\cD(\gamma_j^{(k-1)})}\\
\leq&\lozenge\cdot\prod_{j=1}^{P}\sum_{\gamma_j^{(k-1)}\in\Gamma^{(k-1)}}\frac{\lozenge^{\ell(\gamma_j^{(k-1)})}}{\cD(\gamma_j^{(k-1)})}\\
\leq&\left(\frac{P}{2p}\right)^{P}\lozenge.
\end{align*}
Thus
\begin{align*}
\text{the left-hand side of}~~\eqref{la}~~\text{for}~~k~~\text{is}~~&(I)_k+(II)_k
\leq1+\left(\frac{P}{2p}\right)^{P}\cdot\frac{(2p)^{2p}}{P^{P}}
=\frac{P}{2p}.
\end{align*}
This shows that \eqref{la} holds for $k $.

By induction, we know that \eqref{la} is true for all $k\in\bN$. This completes the proof of Lemma \ref{gal} .
\end{proof}
\begin{lemm}(Analyticity in space)\label{expp}
Let $f: \mathbb R\rightarrow\mathbb R$ be a quasi-periodic function defined by the Fourier series
\[
f(x)=\sum_{n\in\mathbb Z^\nu}\hat f(n)e^{{\rm i}(n\cdot\omega) x}, \quad x\in\mathbb R,
\]
where $\omega\in\bR^\nu$ is rationally independent.
If the Fourier coefficients $\hat f(n)$ decay exponentially, that is, there exist $\mathtt A>0$ and $0<\rho\leq1$ such that
\[
\hat f(n)\ll\mathtt Ae^{-\rho|n|},\quad\forall n\in\mathbb Z^\nu,
\]
then $f$ is analytic.
\end{lemm}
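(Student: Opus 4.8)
\emph{Proof proposal.} The plan is to show that the Fourier series of $f$ extends to a function holomorphic on a horizontal strip about the real axis; analyticity of $f$ on $\mathbb R$ is then immediate. First I would fix $z = x + {\rm i}y \in \mathbb C$ and consider
\[
F(z) = \sum_{n\in\mathbb Z^\nu}\hat f(n)\,e^{{\rm i}(n\cdot\omega)z},
\]
whose restriction to $\mathbb R$ is $f$. Since $\bigl|e^{{\rm i}(n\cdot\omega)z}\bigr| = e^{-(n\cdot\omega)y} \le e^{|n\cdot\omega|\,|y|}$, the exponential decay hypothesis together with the inequality $n\cdot\omega \ll |n|\,|\omega|$ from \eqref{ine1} yields the pointwise bound
\[
\bigl|\hat f(n)\,e^{{\rm i}(n\cdot\omega)z}\bigr| \le \mathtt A\,e^{-\rho|n| + |\omega|\,|y|\,|n|} = \mathtt A\,e^{-(\rho - |\omega|\,|y|)\,|n|}.
\]
Hence for any $\delta$ with $0 < \delta < \rho/|\omega|$, on the closed strip $S_\delta = \{z : |{\rm Im}\,z| \le \delta\}$ every term is dominated by $\mathtt A\,e^{-(\rho - |\omega|\delta)\,|n|}$, a bound independent of $z$.

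The next step is to sum this bound over $n\in\mathbb Z^\nu$: for any $\alpha>0$ one has $\sum_{n\in\mathbb Z^\nu} e^{-\alpha|n|} = \bigl(\sum_{k\in\mathbb Z} e^{-\alpha|k|}\bigr)^\nu < \infty$, since $|n| = \sum_{j=1}^\nu |n_j|$, so with $\alpha = \rho - |\omega|\delta > 0$ the Weierstrass $M$-test gives absolute and uniform convergence of the series defining $F$ on $S_\delta$. Each partial sum is entire in $z$, and a uniform limit of holomorphic functions is holomorphic, so $F$ is holomorphic on the open strip $\{z : |{\rm Im}\,z| < \rho/|\omega|\}$. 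Restricting to $\mathbb R$ we conclude that $f$ is real-analytic, which is the assertion.

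There is no serious obstacle here; the only point worth noting is that the polynomially growing cardinality of $\{n\in\mathbb Z^\nu : |n| = m\}$ is swamped by the exponential factor, which the factorized sum above handles automatically. As an alternative to the strip argument one could differentiate the Fourier series term by term to obtain $|f^{(k)}(x)| \le \mathtt A\,|\omega|^k \sum_{n\in\mathbb Z^\nu} |n|^k e^{-\rho|n|}$ and then, using $\sup_{t\ge 0} t^k e^{-\rho t/2} = (2k/(e\rho))^k \le (2/\rho)^k\,k!$ together with $\sum_{n\in\mathbb Z^\nu} e^{-\rho|n|/2} < \infty$, derive a Cauchy-type estimate $|f^{(k)}(x)| \le C\,k!\,R^{-k}$ with $R = \rho/(2|\omega|)$, which again yields analyticity; the strip version is simply shorter.
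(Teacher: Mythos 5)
Your argument is correct, but it takes a genuinely different route from the paper. The paper stays entirely on the real line: it differentiates the Fourier series term by term, bounds $|\partial_x^m f(x)|$ by $\mathtt A|\omega|^m\sum_{n}|n|^m e^{-\rho|n|}$, absorbs $|n|^m e^{-\rho|n|/2}$ via $y^m e^{-Ky}\leq K^{-m}m!$, and concludes that $f$ lies in the Gevrey class of order $1$, hence is analytic --- this is essentially the ``alternative'' you sketch in your last sentence, down to the same splitting of the exponential weight. Your primary argument instead complexifies: the bound $\bigl|\hat f(n)e^{{\rm i}(n\cdot\omega)z}\bigr|\leq \mathtt A e^{-(\rho-|\omega|\,|y|)|n|}$, the factorized sum $\sum_{n\in\mathbb Z^\nu}e^{-\alpha|n|}=\bigl(\sum_{k\in\mathbb Z}e^{-\alpha|k|}\bigr)^\nu$, and the Weierstrass $M$-test give a holomorphic extension of $f$ to the strip $|{\rm Im}\,z|<\rho/|\omega|$, whence real analyticity by restriction. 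Both proofs are complete; the trade-off is that your strip argument is shorter and yields an explicit quantitative domain of holomorphy (width governed by $\rho/|\omega|$), while the paper's real-variable Gevrey estimate avoids any appeal to complex function theory and produces the derivative bounds $|\partial_x^m f|\leq C^{m+1}m!$ in a form directly reusable elsewhere in their framework. No gaps to report.
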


\begin{proof}
It is sufficient to prove that $f$ is a member of the {\bf Gevrey class} of order $1$; see \cite{gevrey}.

First, for any $m\in\mathbb N$, we have
\begin{align*}
(\partial_x^mf)(x)=\sum_{n\in\mathbb Z^\nu}({\rm i}(n\cdot\omega))^m\hat f(n)e^{{\rm i}\langle n\rangle x}.
\end{align*}
It follows from \eqref{ine1}, the exponential decay of $\hat f(n)$, $y^me^{-Ky}\leq(K^{-1})^mm!$ for all $1\leq m\in\mathbb N$, where $K>0$, and \cite[Lemma 2.4(2)]{DLX22AR}
that
\begin{align*}
(\partial_x^mf)(x)
\ll~&\mathtt A|\omega|^m\sum_{n\in\mathbb Z^\nu}|n|^me^{-\rho|n|}\\
=~&\mathtt A|\omega|^m\sum_{n\in\mathbb Z^\nu}\underbrace{|n|^me^{-\frac{\rho}{2}|n|}}_{\text{bounded by}~(2\rho^{-1})^mm!}e^{-\frac{\rho}{2}|n|}\\
\ll~&\mathtt A(2\rho^{-1}|\omega|)^mm!\sum_{n\in\mathbb Z^\nu}e^{-\frac{\rho}{2}|n|}\\
=~&\mathtt A(2\rho^{-1}|\omega|)^mm!\prod_{j=1}^\nu\sum_{n_j\in\mathbb Z}e^{-\frac{\rho}{2}|n_j|}\\
\ll~&\mathtt A(6\rho^{-1})^\nu(2\rho^{-1}|\omega|)^mm!\\
\ll~&\left(\max\{\mathtt A(6\rho^{-1})^\nu,2\rho^{-1}|\omega|\}\right)^{m+1}m!.
\end{align*}
This implies that $f$ is a member of the Gevrey class of order $1$, that is, $f$ is analytic. This completes the proof of Lemma \ref{expp}.
\end{proof}

\bibliographystyle{alpha}
\bibliography{NLS}

\end{document}